\documentclass{amsart}

\makeatletter
\@namedef{subjclassname@2020}{%
  \textup{2020} Mathematics Subject Classification}
\makeatother 
\usepackage{amsthm,amssymb,amsfonts,latexsym,mathtools,thmtools,amsmath,lscape}
\usepackage[T1]{fontenc}
\usepackage{tikz-cd} 
\usepackage{enumitem} 
\usepackage{longtable}
\usepackage{multirow}
\usepackage{caption}
\usepackage{array}
\usepackage{mathrsfs} 
\usepackage{hyperref}
\usepackage{float}
\usepackage[all]{xy}
\usepackage{booktabs}

\hypersetup{
    colorlinks=true,
    linkcolor=blue,
    filecolor=blue,      
    urlcolor=blue,
    linktocpage=true
}

\usepackage{fvextra}
\DefineVerbatimEnvironment{MyVerbatim}{Verbatim}{
  fontsize=\small,
  breaklines=true,
  breakanywhere=true,
  breaksymbolleft=\textcolor{gray}{\tiny$\hookrightarrow$},
  xleftmargin=1em
}

\newtheorem{theorem}{Theorem}[section]

\newtheorem{proposition}[theorem]{Proposition}

\theoremstyle{definition}
\newtheorem{definition}[theorem]{Definition}

\newtheorem{remark}[theorem]{Remark}

\theoremstyle{remark}

\numberwithin{equation}{section}

\begin{document}

\title[Normal ordering of 3-dimensional skew polynomial rings]{Combinatorial aspects of normal ordering of \\ 3-dimensional skew polynomial rings}


\author{Andr\'es Rubiano}
\address{Universidad Distrital Francisco Jos\'e de Caldas}
\curraddr{Campus Universitario}
\email{aarubianos@udistrital.edu.co}

\author{Armando Reyes}
\address{Universidad Nacional de Colombia - Sede Bogot\'a}
\curraddr{Campus Universitario}
\email{mareyesv@unal.edu.co}
\thanks{The second author was supported by Faculty of Science, Universidad Nacional de Colombia--Sede Bogot\'a, Colombia [grant number 65488].}

\subjclass[2020]{05A05, 05A10, 05A15, 05A19, 11B73, 11B75, 13N15, 16S15, 16S32, 16S36, 16S38, 16T30}

\keywords{Normal ordering, Weyl algebra, Stirling number, 3-dimensional skew polynomial ring}

\date{}

\dedicatory{Dedicated to Professor Oswaldo Lezama on the Occasion of His 70th Birthday}

\begin{abstract} 

In this paper, we discuss combinatorial aspects of normal ordering of 3-dimensional skew polynomial rings defined and classified by Bell and Smith \cite{BellSmith1990}. With some help of the Mathematical software \texttt{SageMath}, we are able to reduce the length of computation of PBW forms and normal orderings appearing in commutation rules of these algebras.

\end{abstract}

\maketitle


\section{Introduction}

Just as Schork \cite[Section 1]{Schork2021} asserted, \textquotedblleft Maybe the first explicit results concerning normal ordering were derived by Scherk \cite{Scherk1823} in his dissertation from 1823.\textquotedblright\ In his thesis (and using the current terminology in the literature), he considered the action of two operators $X$ and $D$ on functions of a real variable defined by
\begin{equation}\label{Schork2021(0)}
(Xf)(x) = xf(x) \quad {\rm and} \quad (Df)(x) = \frac{df}{dx}(x),
\end{equation}
respectively. As it can be seen, these two operators do not commute, and if $I$ denotes the identity operator, they satisfy the {\em commutation relation} given by
\begin{equation}\label{Schork2021(1)}
DX - XD = I.
\end{equation}
As is well-known, this algebra is called the {\em first Weyl algebra} and is defined over the field of complex numbers $\mathbb{C}$ as the $\mathbb{C}$-algebra $A_1(\mathbb{C}) = \mathbb{C} \{X, D\} / \langle XD - DX - I\rangle$. History and a detailed study of this noncommutative ring was made by Coutinho \cite{Coutinho1995}.

If we consider the definition of an {\em arbitrary word} in the indeterminates $X$ and $D$, then we mean an object $\omega$ that can be written as $\omega = X^{r_1} D^{s_1} X^{r_2} D^{s_2} \dotsb X^{r_n} D^{s_n} $ for some elements $r_k, s_k$ belonging to the set of natural numbers $\mathbb{N}$ (we consider the zero element as a natural number). Having in mind the relation (\ref{Schork2021(1)}), it is necessary to respect the order of the operators when acting on a function $f$; this motivates the following definition: a word $\omega$ in the letters $X$ and $D$ is said to be in {\em normal ordered form} if $\omega = a_{r, s}X^r D^s$ for $r, s \in \mathbb{N}$ (and arbitrary coefficients $a_{r,s}$ in some kind of ring or field - not necessarily the field of complex numbers). With this, an expression that consists of a sum of words is called {\em normal ordered} if each of the summand is normal ordered. The process of expressing a word (or a sum of words) into a normal ordered form by using the commutation relation is called {\em normal ordering}, which is the key notion of the paper. In this procedure, we get expressions such as
\begin{equation}\label{Schork2021(2)}
\omega = \sum_{r, s \in \mathbb{N}} S_{r,s}(\omega) X^rD^s,
\end{equation}
with the uniquely determined coefficients $S_{r,s}(\omega)$ known as {\em normal ordering coefficients}.

As a matter of fact, Scherk \cite{Scherk1823} also determined the normal ordered form of the words $(XD)^n$ and $(X^pD)^n$ and investigated combinatorial properties of the normal ordering coefficients. For instance, he proved that 
\begin{equation}
(XD)^n = \sum_{k=1}^n S_{1,1}(n, k) X^kD^k,
\end{equation}
where $S_{1,1}(n+1, k) = S_{1,1}(n, k-1) + kS_{1,1}(n, k)$. Today, these normal ordering coefficients are known today as the {\em Stirling numbers of the second kind} $S(n, k)$ \cite[A008277]{Sloane} (of course, Scherk did not recognize them). In the extensive literature about these numbers, we find the relations between them and the notion of {\em set partition}. In fact, Scherk also considered the analogous normal ordering coefficients $S_{p, 1}(n, k)$ - which are particular examples of the {\em generalized Stirling numbers} - These objects written as $S_{p, q}(n, k)$ are defined by considering normal orderings in the expression $(X^{p} D^{q})^n$. 

A major theme in subsequent work is that \emph{changing the commutation rule} (or moving to deformations and variants of the Weyl algebra) produces new families of Stirling-type numbers with rich combinatorics. A striking example is the interpretation of normal order coefficients in terms of rook numbers on Ferrers boards, leading to explicit formulas and factorization phenomena as shown by Varvak \cite{Varvak2005}. Deformations also connect normal ordering to $q$-analogues and $q$-Stirling families \cite{Schork2006}, while more refined operator strings yield generalized Bell-Stirling structures with graph-theoretic models and additional combinatorial interpretations \cite{MendezBlasiakPenson2005,CodaraDAntonaHell2014}. These contributions establish normal ordering as a robust \emph{combinatorial machine} that turns commutation relations into computable, structured coefficient systems.

In the literature, we can find a large number of works that have addressed the study of normal orders for various families of non-commutative algebras such as the {\em quantum plane}, the $q$-{\em deformed Weyl algebra}, the algebras in physical models of quantum theory defined by the {\em creation operator} and the {\em annihilation operator}, and other generalizations of the Weyl algebra (e.g. \cite{BlasiakPensonSolomon2003, BlasiakFlajolet2011, BlasiakHorzelaPensonSolomonDuchamp2007, HsuShiue1998, MansourSchork2015Book, Maslov1976, Nazaikinskiietal1995, Schork2021}, and references therein). Recently, Briand et al. \cite{BriandLopesRosas2020} studied normally ordered forms for powers of differential operators, deriving generalized Stirling numbers and arithmetic properties of the coefficients, while Mansour and Schork \cite{MansourSchork2023} developed \emph{Ore-Stirling numbers} as normal ordering coefficients in a more general setting than Patrias and Pylyavskyy \cite{PatriasPylyavskyy2018}. As it can be seen in all these works, normal ordering becomes an efficient bridge between algebraic identities and concrete combinatorial classes (set partitions, contractions, graphs), yielding recurrences, generating functions and closed forms for the associated coefficient arrays. It is important to note that all structures considered above have only one nontrivial commutation relation.

The algebras of interest in this paper are the 3-dimensional skew polynomial rings introduced by Bell and Smith \cite{BellSmith1990} as a class of noncommutative algebras on three generators admitting a PBW basis of standard monomials which are defined by three nontrivial commutation relations. Beyond their intrinsic ring-theoretic interest, $3$-dimensional skew polynomial algebras naturally capture familiar structures from Lie theory and quantum groups such as the universal enveloping algebra $U(\mathfrak{sl}(2))$, the dispin algebra $U(\mathfrak{osp}(1,2))$ and Woronowicz-type algebras associated to noncommutative differential calculi appear within this framework \cite{ReyesSuarez20173D, Woronowicz1987}. 

Our purpose in this paper is to develop the algebraic combinatorics underlying PBW normal forms in the class of $3$-dimensional skew polynomial algebras. Fixing the PBW order $x\prec y\prec z$, we regard PBW reduction as a rewriting process that eliminates the forbidden adjacent pairs $yx$, $zy$ and $zx$ via the affine relations (as expected, our ideas are based on Bergman's paper \cite{Bergman1978}, see also \cite{ReyesSuarez20173D}). This viewpoint turns the computation of products and powers into a systematic \emph{normal ordering problem}: given a noncommutative expression, determine its unique expansion in the PBW basis consisting of the monomials $x^i y^j z^{l}$ with $i, j, l \in \mathbb{N}$. Our approach is uniform and computable: we work in a parameterized $3$-skew family and then specialize the parameters to cover the $15$ isomorphism types in the Bell and Smith's classification. In our treatment, we keep in mind Schork's words about algebras with more than two generators: \textquotedblleft When considering algebras with three or more generators where none of the generators is central (hence, one has at least three nontrivial commutation relations) one can clearly define normal ordering in an analogous fashion but the considerations quickly become tedious\textquotedblright\ \cite[p. 4]{Schork2021}. These words motivated us to look for computational alternatives to reduce calculations, and it is the reason why we developed a code for it in the mathematical software \texttt{SageMath}.

The article is organized as follows. Section \ref{Preliminaries} presents the definition and classification of 3-dimensional skew polynomial rings. Next, Section \ref{Normalorderingcases} contains the original results of the paper. With some help of the Mathematical software \texttt{SageMath}, we are able to compute PBW forms and hence normal orderings appearing in commutation rules. More exactly, for each of the fifteen 3-dimensional skew polynomial rings (Proposition \ref{3-dimensionalClassification}), we establish the fundamental two letter normal ordering identities (such as $y^n x^m$, $z^n x^m$, and $z^n y^m$) that serve as basic rewriting rules. These identities are then used as building blocks to derive the higher-order expansions required in our combinatorial results, including powers of blocks and mixed words (e.g.\ $(x^n y^m)^s$, $(xyz)^s$, $(x^n y^m z^t)^s$) and the corresponding coefficient recursions. We want to remark that the software \texttt{SageMath} is used in an elementary way only to reduce the
length of the computation and all computation can be done by hand without assistance of it. Precisely, Section \ref{Computationalsetup} contains the code we implemented in \texttt{SageMath} that allowed us to compute the normal ordered forms of every algebra. Finally, Section \ref{Futurework} presents some ideas for possible future work related to normal ordering issues.

Throughout the paper, $\mathbb{N}$ denotes the set of natural numbers including the zero element. The term ring means an associative ring with identity, not necessarily commutative. All vector spaces and algebras (always associative and unital) are over a fixed field $\Bbbk$, which is assumed to have characteristic zero. As usual, $\Bbbk^{\times} := \Bbbk \ \backslash \ \{0\}$.

\section{Definitions and Preliminaries}\label{Preliminaries}

\begin{definition}[{\cite{BellSmith1990}, \cite[Definition C4.3]{Rosenberg1995}}]\label{3dimensionaldimension}
\textit{A 3-dimensional skew polynomial algebra} $A$ is a $\Bbbk$-algebra generated by the indeterminates $x,y,z$ restricted to relations 
$$
yz - \alpha zy = \lambda,\quad zx - \beta xz = \mu \quad {\rm and} \quad xy - \gamma yx = \nu,
$$ 
such that
\begin{enumerate}
\item [\rm (i)] $\lambda, \mu, \nu\in \Bbbk+\Bbbk x+\Bbbk y+\Bbbk z$ and $\alpha, \beta, \gamma \in \Bbbk\ \backslash\ \{0\}$ and

\item [\rm (ii)] standard monomials $\left\{x^iy^jz^l\mid i,j,l\ge 0\right\}$ are a $\Bbbk$-basis of the algebra $A$.
\end{enumerate}
\end{definition}

\begin{proposition}[{\cite[Theorem C.4.3.1]{Rosenberg1995}}]\label{3-dimensionalClassification}
If $A$ is a 3-dimensional skew polynomial algebra, then $A$
is one of the following algebras:
\begin{enumerate}
\item [\rm (1)] If\ $|\{\alpha, \beta, \gamma\}|=3$, then $A$ is given by the relations 
$$
yz - \alpha zy = 0,\quad zx - \beta xz = 0\quad {\rm and} \quad xy - \gamma yx = 0.
$$

\item [\rm (2)] If $|\{\alpha, \beta, \gamma\}|=2$ and $\beta\neq \alpha =\gamma =1$, then $A$ is one of the following algebras:
\begin{enumerate}
\item [\rm (i)] $yz-zy=z,\ zx-\beta xz=y$ and $xy - yx = x$ {\rm (}if $\beta = -1$, then we get the Dispin algebra{\rm )}.

\item [\rm (ii)] $yz-zy=z,\ zx-\beta xz=b$ and $xy-yx=x${\rm ;}

\item [\rm (iii)] $yz-zy=0,\ zx-\beta xz=y$ and $xy - yx = 0${\rm ;}

\item [\rm (iv)] $yz-zy=0,\ zx-\beta xz=b$ and $xy - yx = 0${\rm ;}

\item [\rm (v)] $yz-zy=az,\ zx-\beta xz = 0$ and $xy - yx = x${\rm ;}

\item [\rm (vi)] $yz-zy=z,\ zx-\beta xz=0$ and $xy - yx = 0$,
\end{enumerate}
where $a, b$ are any elements of\ $\Bbbk$. All non-zero values of $b$ give isomorphic algebras.

\item [\rm (3)] If $|\{\alpha, \beta, \gamma\}|=2$ and $\beta\neq \alpha=\gamma\neq 1$, then $A$ is one of the following algebras:
\begin{enumerate}
\item [\rm (i)] $yz-\alpha zy=0,\ zx-\beta xz = y+b$ and $xy - \alpha yx=0${\rm ;}

\item [\rm (ii)] $yz-\alpha zy=0,\ zx - \beta xz=b$ and $xy - \alpha yx = 0$.
\end{enumerate}
In this case, $b$ is an arbitrary element of $\Bbbk$. Again, any
non-zero values of $b$ give isomorphic algebras.

\item [\rm (4)] If $\alpha=\beta=\gamma\neq 1$, then $A$ is the algebra defined by the relations
\begin{align*}
yz - \alpha zy = &\ a_1x+b_1, \\
zx - \alpha xz = &\ a_2y+b_2, \ {\rm and} \\
xy - \alpha yx = &\ a_3z+b_3.
\end{align*}
If $a_i = 0$ for all $i$, then all non-zero values of $b_i$ give isomorphic
algebras.

\item [\rm (5)] If $\alpha=\beta=\gamma=1$, then $A$ is isomorphic to one of the following algebras:
\begin{enumerate}
\item [\rm (i)] $yz-zy=x,\ zx-xz=y$ and $xy - yx = z${\rm ;}

\item [\rm (ii)] $yz-zy=0,\ zx-xz=0$ and $xy - yx = z${\rm ;}

\item [\rm (iii)] $yz-zy=0,\ zx-xz=0$ and $xy - yx = b${\rm ;}

\item [\rm (iv)] $yz-zy=-y,\ zx-xz=x+y$ and $xy-yx=0${\rm ;}

\item [\rm (v)] $yz-zy=az,\ zx-xz=z$ and $xy - yx = 0${\rm ;}
\end{enumerate}
Parameters $a,b\in \Bbbk$ are arbitrary,  and all non-zero values of
$b$ generate isomorphic algebras.
\end{enumerate}
\end{proposition}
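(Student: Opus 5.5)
The plan is the standard one behind Bell and Smith's classification: convert the PBW condition (ii) of Definition~\ref{3dimensionaldimension} into polynomial identities among the structure constants, and then use linear changes of generators to bring each solution to normal form. I would begin with Bergman's Diamond Lemma \cite{Bergman1978} applied to the rewriting system $zy\mapsto \alpha^{-1}(yz-\lambda)$, $zx\mapsto \beta xz+\mu$, $yx\mapsto \gamma^{-1}(xy-\nu)$, with the deglex order $x\prec y\prec z$. Here $\lambda,\mu,\nu$ have degree $\le 1$, so this order is compatible with the reductions. There is exactly one overlap ambiguity, $zyx$. The standard monomials form a basis if and only if both reductions of $zyx$ agree.

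Next I would write $\lambda=a_1x+a_2y+a_3z+a_0$, and similarly for $\mu$ and $\nu$, and expand the ambiguity. Its degree-two component vanishes automatically, because the coefficients $\alpha,\beta,\gamma$ enter multiplicatively on both sides. What survives is a system of equations from the degree-one and degree-zero components. Each equation has the shape $(\text{product of }(\alpha-1),(\beta-1),(\gamma-1)\text{ or differences like }\alpha-\beta)\cdot(\text{coefficient})=0$, together with a few quadratic relations among the $a_i$'s. Roughly, the equations say that a linear term can survive only when the relevant $q$-parameters coincide or equal $1$. This explains why the classification splits according to $|\{\alpha,\beta,\gamma\}|$ and according to whether the common value is $1$.

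For each case I would solve the system and then normalize. The normalizing moves are rescalings $x\mapsto c_1x$ and so on, translations $x\mapsto x+d$ (which kill constants whenever some parameter differs from $1$), and, when all parameters equal $1$, arbitrary $GL_3$ changes compatible with the Lie bracket. The cases go as follows.

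\begin{itemize}
\item In case (1), all three parameters are distinct. The equations force every linear term to vanish, and translations remove the constants.
\item In cases (2)--(3), two parameters coincide. Only the terms allowed by the equations remain, and rescaling sets the nonzero constants to $1$ or $b$. This step also gives the assertion that all nonzero values of $b$ yield isomorphic algebras.
\item In case (4), all three parameters are equal but different from $1$. The equations are symmetric, and they permit exactly $a_ix+b_i$ in the cyclic pattern stated.
\item In case (5), all parameters equal $1$. The relations define a $3$-dimensional Lie algebra together with a $2$-cocycle giving the constants. Here I would invoke the Bianchi classification of $3$-dimensional Lie algebras, plus $H^2$, to obtain (i)--(v).
\end{itemize}

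The main obstacle is the bookkeeping in the cases with two equal parameters, that is, distinguishing (2)(i)--(vi) up to isomorphism. I would first fix which linear terms the overlap equations permit. After that, I would separate the cases by computing isomorphism invariants such as the derived subalgebra and centrality of $z$, which shows that no two listed families coincide.
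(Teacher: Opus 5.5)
The paper gives no proof of this statement; it is quoted from \cite{Rosenberg1995}, after Bell--Smith. So your outline has to stand on its own. Its shape (Diamond Lemma on the single ambiguity $zyx$, then normalization) is right for (1)--(4). The step in case (5), however, fails, and it cannot be repaired for the statement as printed.

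``Bianchi plus $H^2$'' classifies $A$ only up to affine changes of generators, i.e.\ as pairs $(\mathfrak g,[\omega])$ modulo $\mathrm{Aut}(\mathfrak g)$, and that list is not (5)(i)--(v).

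First, $xy-yx=z$, $yz-zy=1$, $zx-xz=0$ is PBW, with Heisenberg linear part and a nonzero class in $H^2$. No affine change takes it to (ii). It is isomorphic to (iii) with $b\neq 0$ only through the nonlinear substitution $x\mapsto x+\tfrac12 z^2$, which is central. So non-filtered isomorphisms are an ingredient your plan lacks.

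Second, the Lie algebra classification contains $\mathfrak r_{3,\lambda}$ with $\lambda\neq 0$, e.g.\ $zx-xz=x$, $yz-zy=-y$, $xy=yx$. The printed (5)(v) is $U(\mathfrak r_2\oplus\Bbbk)$ for every $a$, because $y+ax$ is central. The algebra above is isomorphic to nothing in (5):
\begin{itemize}
\item its abelianization is $\Bbbk[z]$, which occurs in the list only for (iv);
\item in (iv), $[z,-]$ induces a Jordan block on $I/I^2$ ($I$ the commutator ideal), whereas here it induces a scalar;
\item this operator is an isomorphism invariant up to similarity and a nonzero factor.
\end{itemize}
So Remark~\ref{Rosenberg} seems to correct the wrong letter: $yz-zy=ay$, $zx-xz=x$ satisfies Jacobi and gives $\mathfrak r_{3,-a}$.

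Third, even with that fix, $zx-xz=x$, $yz-zy=-1$, $xy=yx$ is a counterexample. It realizes the nonzero class in $H^2(\mathfrak r_2\oplus\Bbbk)$, is PBW, has centre $\Bbbk$ and has no one-dimensional modules. It is therefore isomorphic to none of (5)(i)--(v): four of them are enveloping algebras, and (iii) is commutative or has centre $\Bbbk[z]$.

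Two further steps need more than characteristic zero, e.g.\ $\Bbbk$ algebraically closed: the Lie algebra classification you invoke, and rescaling three nonzero $b_i$ to $1$ in (4), which needs $\sqrt{b_1b_2b_3}$. Over $\mathbb R$, for instance, $U(\mathfrak{sl}_2(\mathbb R))$ is a further case. Carried out carefully, your plan would show that case (5) is incomplete rather than prove it.

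Smaller corrections for (1)--(4):
\begin{itemize}
\item It is the cubic term of the ambiguity that cancels automatically, not the degree-two one. Resolvability means that $\gamma\lambda x-\beta x\lambda+\alpha\mu y-\gamma y\mu+\beta\nu z-\alpha z\nu$ reduces to $0$.
\item Write $\lambda=\lambda_x x+\lambda_y y+\lambda_z z+\lambda_0$, and similarly for $\mu,\nu$. The quadratic part gives $(\gamma-\beta)\lambda_x=(\alpha-\gamma)\mu_y=(\beta-\alpha)\nu_z=0$, plus three two-term relations such as $(1-\beta)\lambda_y+(\alpha-1)\mu_x=0$.
\item These two-term relations do not force vanishing. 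Unless $\alpha=\beta=1$, they say exactly that $(\lambda_y,\mu_x)$ lies in the image of the translation $z\mapsto z+d$.
\item Hence in (1) translations remove the remaining linear terms. The constants are then killed by the equations ($(\gamma-\beta)\lambda_0=0$, etc.), not by translations, which cannot remove an isolated constant.
\item In (4) the same translations are needed, after which no condition on $a_i,b_i$ remains.
\item You also need a cyclic relabelling of $x,y,z$ to make $\beta$ the odd parameter in (2)--(3); it cycles $(\alpha,\beta,\gamma)$ and preserves the PBW property.
\item Irredundancy is not asserted and is false: (2)(ii) with $b=0$ is (2)(v) with $a=1$. The invariants you planned for it are unnecessary.
\end{itemize}
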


\begin{remark}\label{Rosenberg}
In \cite[Theorem C4.3.1, case (e)(v), p. 101]{Rosenberg1995}, the relation is stated as
\[
yz - zy = az, \quad zx - xz = x \quad {\rm and} \quad xy - yx = 0.
\]
However, this is a typo. Indeed, Rosenberg later realized the same algebra as the Ore extension
\[
A = \Bbbk[x,y][z;\varphi],
\]
where $\varphi$ is an automorphism of $\Bbbk[x,y]$ given by $\varphi(x)=x+1$ and $\varphi(y)=y-a$ \cite[p. 108]{Rosenberg1995}. By the defining rule $za=\varphi(a)z$ for all $a\in \Bbbk[x,y]$, we obtain that 
\begin{align*}
zx = \varphi(x)z = (x+1)z & \quad {\rm whence} \quad zx - xz = z, \\
zy = \varphi(y)z = (y-a)z & \quad {\rm and\ so} \quad yz - zy = az.
\end{align*}
Therefore, the correct presentation of case (e)(v) is
\[
yz - zy = az, \quad zx - xz = z \quad {\rm and} \quad xy- yx = 0.
\]
\end{remark}

Ring, geometric, and differential properties of 3-dimensional skew polynomial rings have been considered in different papers (e.g. \cite{RedmanPhDThesis1996} - \cite{RubianoReyes2026DS3D}).

\section{Normal ordering identities}\label{Normalorderingcases}

This section contains the original results of the paper. We present explicit PBW computations for each one of the $3$-dimensional skew polynomial algebras characterized in Proposition \ref{3-dimensionalClassification}.

We start by recalling some key preliminary facts on {\em quantum calculus}. More details can be found in \cite{Andrewsetal1999, KacCheung2002, MansourSchork2015Book}.
\begin{itemize}
    \item Throughout this section, the PBW monomial order is fixed as $x \prec y \prec z$. Thus, a normal ordered monomial means a monomial of the form $x^i y^j z^k$, with $i,j,k\in\mathbb{N}$. Equivalently, the normal ordering procedure consists of rewriting the forbidden adjacent pairs $yx,\ zy,\ zx$ until a finite $\Bbbk$-linear combination of PBW monomials is obtained.

    \item We use the convention that empty sums are equal to $0$ and empty products are equal to $1$. Moreover, $\delta_{i,j}$ denotes the {\em Kronecker delta}. Unless otherwise stated, any coefficient family indexed outside its prescribed range is understood to be zero. In particular, for coefficient arrays indexed by powers of $\mathbb{N}$, any term with a negative index is taken to be zero.

    \item For $s,t\in\mathbb{N}$, we use the standard binomial coefficient
    \[
    \dbinom{s}{t} := \frac{s!}{t!(s-t)!} \quad \text{ for } 0\leq t\leq s,
    \]
    with the convention that $\binom{s}{t}=0$ if $t > s$. In particular,
    \[
    \binom{s}{2}=\frac{s(s-1)}{2}.
    \]
    Moreover, since $\alpha,\beta,\gamma\in\Bbbk^\times$, the expression $\frac{\beta}{\alpha\gamma}$ denotes the scalar $\beta(\alpha\gamma)^{-1}$ belonging to $\Bbbk^\times$. Hence,
    \[
    \left(\frac{\beta}{\alpha\gamma}\right)^{\binom{s}{2}} = \beta^{\binom{s}{2}}\alpha^{-\binom{s}{2}}\gamma^{-\binom{s}{2}}.
    \]

    \item For $q\in\Bbbk^\times$ and $r\in\mathbb{N}$, we set
    \[
    [r]_q:=1+q+\cdots+q^{r-1} \quad {\rm with} \quad [0]_q:=0.
    \]
    If $q\neq 1$, then
    \[
    [r]_q=\frac{q^r-1}{q-1}.
    \]
    The corresponding $q$-factorial is defined by
    \[
    [r]_q! := \prod_{j=1}^{r}[j]_q \quad {\rm and} \quad [0]_q! := 1.
    \]

    \item For $0\leq k\leq r$, the $q$-{\em falling factorial} and the {\em Gaussian binomial coefficient} are denoted by
    \[
        [r]^k_q:=\prod_{j=0}^{k-1}[r-j]_q \quad {\rm with} \quad [r]^0_q:=1,
    \]
    and
    \[
        \begin{bmatrix}
            r\\ k
        \end{bmatrix}_q := \frac{[r]_q!}{[r-k]_q![k]_q!} = \frac{[r]^k_q}{[k]_q!}, 
    \]
    respectively. We use the convention
    \[
        \begin{bmatrix}
            r\\ k
        \end{bmatrix}_q := 0 \ \text{if } k>r.
    \]

    \item For $\rho,\sigma\in\Bbbk$ and $r\geq 1$, we define the mixed
    $(\rho,\sigma)$-number by
    \[
        \langle r\rangle_{\rho,\sigma} := \sum_{j=0}^{r-1}\rho^{r-1-j}\sigma^j.
    \]
    In particular, the mixed number used in algebras of type $(3)$ in Proposition \ref{3-dimensionalClassification} is given by 
    \[
    \langle r\rangle_{\beta,\alpha^{-1}} = \sum_{j=0}^{r-1}\beta^{r-1-j}\alpha^{-j}.
    \]
    We also set
    \[
        \langle r\rangle_{\rho,\sigma}! := \prod_{j=1}^{r}\langle j\rangle_{\rho,\sigma}, \ \langle 0\rangle_{\rho,\sigma}!:=1,
    \]
    and
    \[
        \begin{bmatrix}
            r\\ k
        \end{bmatrix}_{\rho,\sigma} := \frac{\langle r\rangle_{\rho,\sigma}!}{\langle r-k\rangle_{\rho,\sigma}!\ \langle k\rangle_{\rho,\sigma}!}.
    \]

    \item The symbols
    \[
        W,\ U,\ V,\ R,\ C,\ E,\ S,\ T,\ \Theta,\ Q_r,\ P_r
    \]
    denote coefficient families arising from PBW normal ordering. They are not universal constants: in each proposition they are defined by the corresponding displayed PBW expansion or recurrence relation. For instance, coefficients of the form $W^{(m)}_{n,k}(y)$ encode the normal ordering of $z^n x^m$, while coefficients of the form $U^{(n,m)}_{s,\ell}$, $V_{s,\ell}$, $R^{(n,m,t)}_{s,\ell}$, $C^{(n,m,t)}_{s,\ell}$, and $E_{n;i,k}$ encode, respectively, the PBW expansions of powers of blocks and noncommutative binomial expressions appearing below.
\end{itemize}

Let us start.

\subsection{Algebras of type (1)}

\begin{proposition}\label{Case(1)}
For the algebra given by the relations
$$
yz - \alpha zy = 0,\ zx - \beta xz = 0\ \, \, {\rm and} \, \, xy - \gamma yx = 0 \quad {\rm with} \, \, \alpha, \beta, \gamma\in \Bbbk^{\times}, 
$$
the following identities hold.
\begin{enumerate}
\item [\rm (1)] For all $m, n\in\mathbb{N}$, we have that
\[
y^n x^m = \gamma^{-mn}x^m y^n, \quad
z^n x^m = \beta^{mn}x^m z^n \quad {\rm and} \quad z^n y^m = \alpha^{-mn}y^m z^n.
\]

\item[\rm (2)] For all $m,n,s\in\mathbb{N}$,
\begin{align*}
\left(x^n y^m\right)^s = &\, \gamma^{-mn\binom{s}{2}}x^{ns}y^{ms},\\
(x^n z^m)^s = &\, \beta^{mn\binom{s}{2}}x^{ns}z^{ms},\\
(y^n z^m)^s = &\, \alpha^{-mn\binom{s}{2}}y^{ns}z^{ms}.
\end{align*}

\item[\rm (3)] For all $s\in\mathbb{N}$,
\[
(xyz)^s = \beta^{\binom{s}{2}}\alpha^{-\binom{s}{2}}\gamma^{-\binom{s}{2}}x^s y^s z^s = \left(\frac{\beta} {\alpha\gamma}\right)^{\binom{s}{2}}x^s y^s z^s.
\]

\item[\rm (4)] For all $m, n , t, s\in\mathbb{N}$,
\[
\left(x^n y^m z^t\right)^s = \beta^{nt\binom{s}{2}}\gamma^{-mn\binom{s}{2}}\alpha^{-mt\binom{s}{2}}
x^{ns}y^{ms}z^{ts}.
\]

\item[\rm (5)] For all $n\in\mathbb{N}$,
\begin{align*}
(x+y)^n & =\sum_{k=0}^{n}\begin{bmatrix} n\\ k\end{bmatrix}_{\gamma^{-1}}x^{n-k}y^{k},\\
(x+z)^n & =\sum_{k=0}^{n}\begin{bmatrix} n\\ k\end{bmatrix}_{\beta}x^{n-k}z^{k} \quad {\rm and} \\
(y+z)^n & =\sum_{k=0}^{n}\begin{bmatrix} n\\ k\end{bmatrix}_{\alpha^{-1}}y^{n-k}z^{k}.
\end{align*}
\end{enumerate}
\end{proposition}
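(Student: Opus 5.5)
The plan is a direct computation from the three relations. First rewrite them as ordering rules:
\[
yx=\gamma^{-1}xy,\qquad zx=\beta xz,\qquad zy=\alpha^{-1}yz .
\]
Every forbidden pair is then replaced by a scalar multiple of the corresponding standard pair. It follows that normal ordering any word only produces a single standard monomial times a product of $\alpha^{\pm1},\beta,\gamma^{-1}$. The task reduces to counting how many transpositions of each kind occur.

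For part (1), I would first prove $y^n x=\gamma^{-n}xy^n$ by induction on $n$, moving $x$ leftwards one $y$ at a time. A second induction on $m$ then gives $y^nx^m=\gamma^{-mn}x^my^n$. The same argument with $zx=\beta xz$ and $zy=\alpha^{-1}yz$ gives the other two identities. In each case, passing a block of $n$ letters across a block of $m$ letters costs exactly $mn$ elementary swaps.

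For parts (2)--(4), I would induct on $s$, writing $(w)^{s}=(w)^{s-1}w$ and using the induction hypothesis for $(w)^{s-1}$.

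In (2), with $w=x^ny^m$, the step requires moving $x^n$ leftwards past $y^{m(s-1)}$. By (1) this costs $\gamma^{-mn(s-1)}$. Since $\sum_{j=1}^{s-1}j=\binom{s}{2}$, the exponents accumulate to $mn\binom{s}{2}$. The $xz$ and $yz$ cases are identical.

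For (4), in the product $x^{n(s-1)}y^{m(s-1)}z^{t(s-1)}\cdot x^ny^mz^t$ I would:
\begin{itemize}
\item move $x^n$ past $z^{t(s-1)}$, giving $\beta^{nt(s-1)}$;
\item move $x^n$ past $y^{m(s-1)}$, giving $\gamma^{-mn(s-1)}$;
\item move $y^m$ past $z^{t(s-1)}$, giving $\alpha^{-mt(s-1)}$.
\end{itemize}
Summing over the induction steps yields the stated exponents. Part (3) is the special case $n=m=t=1$.

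For part (5), the key is the classical $q$-binomial theorem: if $ba=q\,ab$, then $(a+b)^n=\sum_k\left[\begin{smallmatrix} n\\ k\end{smallmatrix}\right]_q a^{n-k}b^k$. It applies with $(a,b,q)$ equal to $(x,y,\gamma^{-1})$, $(x,z,\beta)$ and $(y,z,\alpha^{-1})$. I would prove it by induction on $n$, multiplying $(a+b)^{n}$ on the right by $(a+b)$. Moving the new $a$ leftwards past $b^k$ gives $b^ka=q^kab^k$. The coefficient of $a^{n+1-k}b^k$ then satisfies
\[
\begin{bmatrix} n\\ k-1\end{bmatrix}_q+q^k\begin{bmatrix} n\\ k\end{bmatrix}_q=\begin{bmatrix} n+1\\ k\end{bmatrix}_q,
\]
which is the standard $q$-Pascal rule.

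The main obstacle is bookkeeping, not ideas. The points to watch are the direction of each exponent sign, since the relations are stated in the "wrong" order, and choosing the version of the $q$-Pascal rule that matches left- versus right-multiplication. I would double-check $q=\gamma^{-1}$ and not $\gamma$ on the case $n=2$: $(x+y)^2=x^2+(1+\gamma^{-1})xy+y^2$.
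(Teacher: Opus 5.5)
Your proposal is correct and follows essentially the same route as the paper. The paper also proves (1) by double induction from $yx=\gamma^{-1}xy$, $zx=\beta xz$, $zy=\alpha^{-1}yz$; it handles (2) through the lemma that $vu=quv$ implies $(uv)^s=q^{\binom{s}{2}}u^sv^s$, which is your step of moving $x^n$ past $y^{m(s-1)}$; it handles (3)--(4) through the monomial product rule $(x^iy^jz^k)(x^{i'}y^{j'}z^{k'})=\beta^{ki'}\gamma^{-ji'}\alpha^{-kj'}x^{i+i'}y^{j+j'}z^{k+k'}$, which is exactly your three bulleted moves; and it proves (5) by the $q$-binomial theorem via right multiplication and the $q$-Pascal rule, just as you do.
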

\begin{proof}
\begin{enumerate}
\item [\rm (1)] By induction on $m$, from $yx = \gamma^{-1}xy$ we get $y x^m = \gamma^{-m}x^m y$,  and so $y^n x^m = \gamma^{-mn}x^m y^n$ by induction on $n$. Similarly, $zx = \beta xz$ implies that $z x^m = \beta^m x^m z$, whence $z^n x^m=\beta^{mn}x^m z^n$. The equality $yz = \alpha zy$ is equivalent to $zy = \alpha^{-1}yz$, so that $z^n y^m = \alpha^{-mn}y^m z^n$ by double induction.

\item [\rm (2)] If $vu = quv$ for some $q\in\Bbbk\ \backslash \ \{0\}$, then
\[
(uv)^s = q^{\binom{s}{2}}u^s v^s, \quad  {\rm for}\, \, s\in\mathbb{N},
\]
which follows by induction using that
$$
(uv)^{s+1} = (uv)^s(uv)=q^{\binom{s}{2}}u^s(v^s u)v \quad {\rm and} \quad v^s u = q^s u v^s.
$$

Set $u := x^n$ and $v := y^m$. From {\rm (1)}, 
$$
v u = y^m x^n = \gamma^{-mn}x^n y^m=\gamma^{-mn} u v,
$$
whence $q = \gamma^{-mn}$, and the first identity follows. In a similar way, the other two follow by considering $q = \beta^{mn}$ for $(x^n,z^m)$ and $q=\alpha^{-mn}$ for $(y^n,z^m)$, respectively.

\item [\rm (3)] Write $w:=xyz$. Using the general multiplication rule for PBW monomials given by 
\[
(x^i y^j z^k)(x^{i'} y^{j'} z^{k'})
=\beta^{k i'}\gamma^{-j i'}\alpha^{-k j'}x^{i+i'}y^{j+j'}z^{k+k'},
\]
we compute recursively
\[
w^{r+1}=w^r w
=\big(\beta^{r}\gamma^{-r}\alpha^{-r}\big)x^{r+1}y^{r+1}z^{r+1}.
\]
Multiplying these scalars for $r=1,\dots,s-1$ yields the claimed factor
$\beta^{\binom{s}{2}}\alpha^{-\binom{s}{2}}\gamma^{-\binom{s}{2}}$.

\item [\rm (4)] Let $M:=x^n y^m z^t$. Using the PBW multiplication rule above with
$(i,j,k)=(rn,rm,rt)$ and $(i',j',k')=(n,m,t)$ gives
\[
M^{r+1}=M^r M
=\beta^{(rt)n}\gamma^{-(rm)n}\alpha^{-(rt)m}x^{(r+1)n}y^{(r+1)m}z^{(r+1)t}.
\]
Multiplying over $r=1,\dots,s-1$ yields the stated factor with exponent $\binom{s}{2}$.

\item [\rm (5)] If $vu=quv$, then
\[
(u+v)^n=\sum_{k=0}^n \begin{bmatrix} n\\ k\end{bmatrix}_{q}u^{n-k}v^k,
\]
proved by induction using $(u+v)^{n+1}=(u+v)^n(u+v)$ and the recurrence for Gaussian binomials.
Here, $yx = \gamma^{-1}xy$, $zx = \beta xz$ and $zy = \alpha^{-1}yz$, so the three identities follow with
$q=\gamma^{-1}$, $q=\beta$ and $q=\alpha^{-1}$, respectively.
\end{enumerate}
\end{proof}

\subsection{Algebras of type (2)}

In what follows, we treat the six algebras of type (2) separately. We state the corresponding normal ordering identities and PBW expansions and provide complete proofs based on the defining relations and the fixed PBW order $x\prec y\prec z$. In particular, all formulas summarized in Tables \ref{alg(2)}-\ref{alg(2)3} are derived explicitly and whenever coefficient families are defined recursively, we verify their initial conditions and recurrence relations in detail.

\subsubsection{Algebras of type (2)(i)}

\begin{proposition}\label{case(2)(i)}
Let $A$ be the $\Bbbk$-algebra generated by the indeterminates $x,y,z$ with defining relations given by 
\[
yz - zy = z, \quad zx - \beta xz = y  \quad {\rm and}\quad xy - yx = x \quad {\rm with} \quad \beta \in \Bbbk^{\times}. 
\]
The following assertions hold:
\begin{enumerate}
\item [\rm (1)] For all $m,n\in\mathbb{N}$, we get that 
\[
y^n x^m = x^m (y-m)^n \quad {\rm and} \quad z^n y^m = (y-n)^m z^n.
\]
For $m, n \in \mathbb{N}$, there exist uniquely polynomials $W^{(m)}_{n,k}(y)\in\Bbbk[y]$ with $0\le k\le \min\{m,n\}$ such that
\begin{equation}\label{znxmW2i}
z^n x^m = \sum_{k=0}^{\min\{m,n\}}x^{m-k}W^{(m)}_{n,k}(y)z^{n-k}.
\end{equation}
These polynomials are given by $W^{(m)}_{0,0}(y) :=1$ and $W^{(m)}_{0,k}(y) = 0$ for $k\ge 1$, and the recurrence relation
\begin{equation}\label{Whitneyrec2i}
W^{(m)}_{n+1,k}(y) = \beta^{m-k}W^{(m)}_{n,k}(y-1)+Q_{m-k+1}(y)W^{(m)}_{n,k-1}(y),
\end{equation}
where $W^{(m)}_{n,-1}(y):=0$ and
\begin{equation}\label{Qmdef2i}
Q_r(y):=[r]_{\beta} y-\sum_{i=1}^{r-1}[i]_{\beta}.
\end{equation}

\item [\rm (2)] For all $m,n,s\in\mathbb{N}$, we have that 
\[
(x^n y^m)^s=x^{ns}\prod_{j=0}^{s-1}(y-jn)^m \quad {\rm and} \quad (y^n z^m)^s = \left(\prod_{j=0}^{s-1}(y-jm)^n\right)z^{ms}.
\]
Moreover, $(x^n z^m)^s$ admits a unique PBW expansion given by 
\begin{equation}\label{xnzmUdef2i}
(x^n z^m)^s=\sum_{\ell=0}^{\min\{ns,ms\}}x^{ns-\ell}U^{(n,m)}_{s,\ell}(y)z^{ms-\ell}
\quad {\rm where} \quad U^{(n,m)}_{s,\ell}(y)\in\Bbbk[y],
\end{equation}
and
\[
U^{(n,m)}_{1,0}(y)=1 \quad {\rm and} \quad U^{(n,m)}_{1,\ell}(y)=0 \text{ for } \ell\ge 1,
\]
Besides, for every $s\ge 1$ and every $\ell\ge 0$ we obtain the explicit recursion
\begin{equation}\label{Urecexplicit2i}
U^{(n,m)}_{s+1,\ell}(y) =\sum_{k=0}^{\min\{n,\ell\}}U^{(n,m)}_{s,\ell-k}\big(y-(n-k)\big)W^{(n)}_{ms-(\ell-k),k}(y),
\end{equation}
where the polynomials $W^{(n)}_{r,k}(y)\in\Bbbk[y]$ are the normal ordering coefficients in expression \eqref{Whitneyrec2i}.

\item [\rm (3)] There exist uniquely determined polynomials $V_{s,\ell}(y)\in\Bbbk[y]$ $(0\le \ell\le s)$ such that
\begin{equation}\label{Vdefxyz2i}
(xyz)^s = \sum_{\ell=0}^{s} x^{s-\ell}V_{s,\ell}(y)z^{s-\ell} \quad {\rm where}\quad V_{s,\ell}(y)\in\Bbbk[y],
\end{equation}
with
\[
V_{1,0}(y) = y \quad {\rm and} \quad V_{1,\ell}(y)=0 \quad \text{for}\, \,  \ell\ge 1,
\]
Once again, for every $s\ge 1$ and every $\ell\ge 0$, we get the explicit recursion
\begin{equation}\label{Vrecexplicit2i}
V_{s+1,\ell}(y) = (y-(s-\ell))\left(\beta^{s-\ell}V_{s,\ell}(y-1)+Q_{s-\ell+1}(y) V_{s,\ell-1}(y)\right),
\end{equation}
where we set $V_{s,-1}(y):=0$.

\item [\rm (4)] For all $m, n, t, s\in\mathbb{N}$ and $0\le \ell\le \min\{ns,ts\}$, there exist uniquely determined polynomials $R^{(n,m,t)}_{s,\ell}(y)\in\Bbbk[y]$ , such that
\begin{equation}\label{Rdefblock2i}
(x^n y^m z^t)^s = \sum_{\ell=0}^{\min\{ns,ts\}} x^{ns-\ell}R^{(n,m,t)}_{s,\ell}(y)z^{ts-\ell} \quad {\rm and} \quad R^{(n,m,t)}_{s,\ell}(y)\in\Bbbk[y],
\end{equation}

with
\[
R^{(n,m,t)}_{1,0}(y)=y^m \quad {\rm and} \quad R^{(n,m,t)}_{1,\ell}(y) = 0 \, \,  \text{ for} \, \, \ell\ge 1,
\]
For every $s\ge 1$ and every $\ell\ge 0$, we obtain the recurrence relation given by
\begin{equation}\label{Rrecexplicit2i}
R^{(n,m,t)}_{s+1,\ell}(y) = \big(y-(ts-\ell)\big)^{m}\sum_{k=0}^{\min\{n,\ell\}}R^{(n,m,t)}_{s,\ell-k}\big(y-(n-k)\big)W^{(n)}_{ts-(\ell-k),k}(y),
\end{equation}
where the polynomials $W^{(n)}_{r,k}(y)\in\Bbbk[y]$ are the normal ordering coefficients in expression \eqref{Whitneyrec2i}.

\item[\rm (5)] There exist uniquely determined polynomials $S_{n,k}(y),T_{n,k}(y)\in\Bbbk[y]$ such that
\begin{equation}\label{binomxy2i}
(x+y)^n=\sum_{k=0}^{n} x^kS_{n,k}(y), \quad (y+z)^n=\sum_{k=0}^{n} T_{n,k}(y)z^k,
\end{equation}
and they are characterized by the Stirling/Whitney-type recursions
\[
S_{0,0}(y)=1,\ S_{0,k}(y)=0\ (k\ge 1),\quad S_{n+1,k}(y)=S_{n,k-1}(y)+(y-k) S_{n,k}(y),
\]
and 
\[
T_{0,0}(y)=1,\ T_{0,k}(y)=0\ (k\ge 1),\quad T_{n+1,k}(y)=y T_{n,k}(y)+T_{n,k-1}(y-1).
\]
Furthermore, $(x+z)^n$ admits a PBW expansion
\begin{equation}\label{Edefxz2i}
(x+z)^n=\sum_{\substack{i,k\ge 0\\ i+k\le n}} x^{ i}E_{n;i,k}(y) z^{k} \quad {\rm where} \quad E_{n;i,k}(y)\in\Bbbk[y],
\end{equation}
with the conventions $E_{n;i,k}(y)=0$ if $i<0$ or $k<0$ or $i+k>n$ and
\[
E_{0;0,0}(y)=1,\quad E_{0;i,k}(y)=0 \text{ for }(i,k)\neq(0,0),
\]
and for every $n\ge 0$ and all $i,k\ge 0$ we have the explicit recursion
\begin{equation}\label{Erecexplicit2i}
E_{n+1;i,k}(y)=E_{n;i,k-1}(y)+\beta^{k}E_{n;i-1,k}(y-1)+Q_{k+1}(y)E_{n;i,k+1}(y),
\end{equation}
where $Q_r(y)$ is the $\beta$-dependent polynomial defined in relation \eqref{Qmdef2i}.
\end{enumerate}
\end{proposition}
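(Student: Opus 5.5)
The whole proof rests on three basic commutation rules derived from the defining relations:
\[
yx = x(y-1), \qquad zy = (y-1)z, \qquad zx = \beta xz + y.
\]
From $xy-yx=x$ we get $yx=xy-x=x(y-1)$, and from $yz-zy=z$ we get $zy=yz-z=(y-1)z$. It follows that $p(y)x=xp(y-1)$ and $zp(y)=p(y-1)z$ for every $p\in\Bbbk[y]$.

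\textbf{Part (1).} Induction on $m$ and $n$ gives $y^nx^m=x^m(y-m)^n$ and $z^ny^m=(y-n)^mz^n$. For $z^nx^m$ we proceed in two steps.

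First we compute $zx^m$ by induction on $m$. Write $zx^{m+1}=(zx^m)x$, substitute $zx=\beta xz+y$, and move $y$ past $x$ using $yx=x(y-1)$. This yields
\[
zx^m=\beta^mx^mz+x^{m-1}Q_m(y).
\]
The coefficient satisfies $Q_{m+1}(y)=\beta Q_m(y)+y-m$ with $Q_1=y$. A short check shows that $Q_r(y)=[r]_\beta y-\sum_{i=1}^{r-1}[i]_\beta$ satisfies this same recursion, which establishes \eqref{Qmdef2i}.

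Next we get \eqref{znxmW2i} by induction on $n$. Multiply the expansion of $z^nx^m$ on the left by $z$. Pass $z$ through $x^{m-k}$ using the formula for $zx^{m-k}$, and then through $W^{(m)}_{n,k}(y)$ using $zp(y)=p(y-1)z$. The term $\beta^{m-k}x^{m-k}zW=\beta^{m-k}x^{m-k}W(y-1)z$ contributes the first summand of \eqref{Whitneyrec2i}. The term $x^{m-k-1}Q_{m-k}(y)W$ contributes, after reindexing $k\mapsto k-1$, the second summand $Q_{m-k+1}(y)W^{(m)}_{n,k-1}(y)$. Uniqueness throughout follows from condition (ii) of Definition \ref{3dimensionaldimension}: every element $x^ip(y)z^j$ is a unique combination of standard monomials.

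\textbf{Parts (2)–(4).} All of these are inductions on $s$ that peel off one factor on the right.

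For $(x^ny^m)^s$, move $x^n$ leftward across $\prod_j(y-jn)^m$; each crossing shifts $y\mapsto y-n$, which produces the next factor of the product. The formula for $(y^nz^m)^s$ follows symmetrically from $z^mp(y)=p(y-m)z^m$.

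For $(x^nz^m)^{s+1}=(x^nz^m)^s\,x^nz^m$, we need the product $x^{ns-\ell}U_{s,\ell}(y)z^{ms-\ell}x^nz^m$. Expand $z^{ms-\ell}x^n$ by Part (1) as $\sum_k x^{n-k}W^{(n)}_{ms-\ell,k}(y)z^{ms-\ell-k}$. Then move $U_{s,\ell}(y)$ past $x^{n-k}$, which turns it into $U_{s,\ell}(y-(n-k))$. Collecting the coefficient of total $x$-deficit $\ell+k$ and renaming $\ell+k\mapsto\ell$ gives \eqref{Urecexplicit2i}.

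Part (4) is the same computation with an extra factor $y^m$ appended before $z^t$. Commuting $z^{ts-\ell'}$ past $y^m$, where $\ell'$ is the final $z$-deficit, produces the factor $(y-(ts-\ell))^m$ once $\ell$ is reindexed. A detail to verify here is that the $z$-power at the moment of commuting past $y^m$ is $ts-\ell$ after reindexing.

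Part (3) is the case $n=m=t=1$ of Part (4). Substituting $W^{(1)}_{r,0}(y)=\beta\cdot(\text{shift})$ and $W^{(1)}_{r,1}=Q$ into \eqref{Rrecexplicit2i} collapses it to \eqref{Vrecexplicit2i}. I would carry out this specialization directly: for $m=1$ the recursion \eqref{Whitneyrec2i} gives $W^{(1)}_{r,0}=\beta^r$ and $W^{(1)}_{r,1}(y)=\sum_i\beta^{\ast}\,y$-shifts. The precise matching of $\beta$-powers against the stated formula is the step I expect to need the most care. It may be cleaner to rederive Part (3) directly by multiplying on the right by $xyz$, using $zx^{?}$ only through the single-$x$ rule $z^{r}x=\beta^r xz^r+(\cdots)z^{r-1}$.

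\textbf{Part (5).} For $(x+y)^{n+1}=(x+y)^n(x+y)$, right multiplication by $x$ uses $S(y)x=xS(y-1)$. This is where a shift identity $x^kS(y)x=x^{k+1}S(y-1)$ must be reconciled with the stated recursion $S_{n+1,k}=S_{n,k-1}+(y-k)S_{n,k}$. It may be better to multiply on the left, $(x+y)(x+y)^n$, using $yx^k=x^k(y-k)$, which gives exactly the stated recursion. Similarly, $(y+z)(y+z)^n$ with $zT(y)=T(y-1)z$ gives the $T$-recursion.

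For $(x+z)^{n+1}=(x+z)(x+z)^n$, left multiplication by $x$ gives $x^{i+1}E$ and hence the term $E_{n;i-1,k}$. Left multiplication by $z$, however, requires passing $z$ through $x^i$, and that produces both $\beta^i$ and $Q_i$ terms. This mismatches the stated indexing ($\beta^k$, $Q_{k+1}$, $E_{n;i,k+1}$), which instead suggests right multiplication $(x+z)^n(x+z)$. In that form $z^kx=\beta^kxz^k+(\cdots)z^{k-1}$, computed from the $m=1$ case of Part (1).

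The main obstacle I expect is this bookkeeping: choosing, for each identity, the side of multiplication and the shift conventions so that the recursion comes out exactly as stated. In particular, the commutation of $E(y)$ past $x$ must be checked against the shifts written in \eqref{Erecexplicit2i}.
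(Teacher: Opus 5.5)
Your plan follows the paper's route: the shift rules $f(y)x=xf(y-1)$ and $zf(y)=f(y-1)z$, the formula for $zx^m$, induction on $n$ for $z^nx^m$, peeling off one block on the right for (2)--(4), left multiplication for $S_{n,k}$ and $T_{n,k}$, and right multiplication $(x+z)^n(x+z)$ for $E_{n;i,k}$. The approach is sound, but the proposal stops exactly where (3) and the $E$-part of (5) depend on one fact. The recursions \eqref{Vrecexplicit2i} and \eqref{Erecexplicit2i} are written with $Q_r(y)$, which you obtain as the coefficient in $zx^r$. What actually occurs in those computations is the coefficient of $z^{r-1}$ in $z^rx$, namely $W^{(1)}_{r,1}(y)$, and you leave it unidentified. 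As written, therefore, neither recursion is established. The missing identity is
\[ z^rx=\beta^rxz^r+Q_r(y)z^{r-1}\quad(r\ge 1),\qquad\text{equivalently}\qquad W^{(1)}_{r,1}(y)=Q_r(y). \]
The paper's own sketch also passes over this.

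The identity is true, and you nearly have it already. Splitting $zx^{m+1}=(zx^m)x$, as you propose, actually gives $Q_{m+1}(y)=Q_m(y-1)+\beta^m y$. The recursion $\beta Q_m(y)+y-m$ that you wrote comes from the other splitting, $(zx)x^m$. Both hold for the closed form \eqref{Qmdef2i}, because $[r+1]_\beta=[r]_\beta+\beta^r=1+\beta[r]_\beta$. On the other side, \eqref{Whitneyrec2i} with $m=1$ gives $W^{(1)}_{r,0}=\beta^r$ and $W^{(1)}_{r+1,1}(y)=W^{(1)}_{r,1}(y-1)+\beta^r y$, with $W^{(1)}_{1,1}(y)=y=Q_1(y)$. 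This is the same recursion with the same initial value, so $W^{(1)}_{r,1}=Q_r$. You can also see it directly: $z^{r+1}x=\beta^r(\beta xz+y)z^r+Q_r(y-1)z^r$.

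With this identity, (3) follows as the case $n=m=t=1$ of \eqref{Rrecexplicit2i}, which is literally \eqref{Vrecexplicit2i}. The $k=0$ term is $\beta^{s-\ell}V_{s,\ell}(y-1)$, the $k=1$ term is $Q_{s-\ell+1}(y)V_{s,\ell-1}(y)$, and the prefactor is $y-(s-\ell)$. For $(x+z)^n(x+z)$ one gets $x^iE_{n;i,k}(y)z^kx=\beta^kx^{i+1}E_{n;i,k}(y-1)z^k+x^iQ_k(y)E_{n;i,k}(y)z^{k-1}$. After shifting indices this is exactly \eqref{Erecexplicit2i}, and the $(y-1)$ shift comes from $E(y)x=xE(y-1)$. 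The point you flagged in (4) is immediate. The $z$-power that meets $y^m$ is $ts-\ell'-k$, which equals $ts-\ell$ for the new index $\ell=\ell'+k$, so the factor $(y-(ts-\ell))^m$ does not depend on $k$.
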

\begin{proof}
\begin{enumerate}
\item [\rm (1)] From $yx = xy-x$ we get $yx = x(y-1)$, and so for any polynomial $f(y)\in\Bbbk[y]$,
\begin{equation}\label{fyshiftx2i}
f(y)x=xf(y-1).
\end{equation}
Iterating \eqref{fyshiftx2i} yields $f(y) x^m = x^m f(y-m)$ and taking $f(y) = y^n$ gives $y^n x^m = x^m(y-m)^n$.

Similarly, $yz-zy=z$ is equivalent to $zy = (y-1)z$, whence for $f(y)\in\Bbbk[y]$,
\begin{equation}\label{fyshiftz2i}
zf(y)=f(y-1)z.
\end{equation}
If we iterate \eqref{fyshiftz2i}, then we get $z^n y^m = (y-n)^m z^n$.

For $zx^m$, write $zx^m = (zx)x^{m-1}=(\beta xz+y)x^{m-1} = \beta x(zx^{m-1})+y x^{m-1}$ and use $y x^{m-1}=x^{m-1}(y-(m-1))$ from the first part. Induction on $m$ gives
\[
zx^m = \beta^m x^m z+x^{m-1}\left([m]_{\beta} y-\sum_{i=1}^{m-1}[i]_{\beta}\right),
\]
that is, the expression with $Q_m(y)$ in \eqref{Qmdef2i}.

To obtain \eqref{znxmW2i}, consider induction on $n$ and write 
$$
z^{n+1}x^m = z\left(\sum_k x^{m-k}W^{(m)}_{n,k}(y)z^{n-k}\right).
$$
Using that $z f(y)=f(y-1)z$ and the already proved formula for $z x^{m-k}$, it follows the recursion \eqref{Whitneyrec2i} while the PBW uniqueness yields $W^{(m)}_{n,k}(y)$.

\item [\rm (2)] Since $f(y)x^n = x^n f(y-n)$ with $f(y) = y^m$, then $y^m x^n = x^n (y-n)^m$, and so that 
\begin{align*}
(x^n y^m)^{s+1} = &\ (x^n y^m)^s x^n y^m \\
= &\, x^{ns}\left(\prod_{j=0}^{s-1}(y-jn)^m\right)x^n y^m \\
= &\, x^{n(s+1)}\left(\prod_{j=0}^{s}(y-jn)^m\right),
\end{align*}
which proves the first identity by induction on $s$. The second identity is analogous using $z^m y^n=(y-m)^n z^m$.

For $(x^n z^m)^s$, the existence and uniqueness of the PBW expansion \eqref{xnzmUdef2i} is immediate from PBW basis.

To obtain the recursion for $U^{(n,m)}_{s,\ell}(y)$, one multiplies \eqref{xnzmUdef2i} by $x^n z^m$ on the right, then repeatedly normal orders each subword $z^{(\cdot)}x^n$ using \eqref{znxmW2i} and finally pushes any intermediate $x$-powers past polynomials in $y$ via $f(y)x^r=x^r f(y-r)$. This produces an explicit recursive procedure expressed solely in terms of $W^{(n)}_{\bullet,\bullet}$.

\item [\rm (3)] This is the special case of (2) applied to the block $xyz$, together with PBW uniqueness: expanding $(xyz)^s$ by iterated multiplication and repeatedly reducing the forbidden pairs
$zx,zy,yx$ produces a unique expression of the form \eqref{Vdefxyz2i}. The same normal ordering step $zx=\beta xz+y$ is responsible for the $[r]_{\beta}$-coefficients in the recursion.

\item [\rm (4)] Write $(x^n y^m z^t)^s$ as a product of $s$ identical blocks and proceed as in (2): first commute the $y$-powers past the $x$-powers using that
$$
y^m x^n=x^n(y-n)^m
$$
to obtain the top term 
$$
x^{ns}\left(\prod_{j=0}^{s-1}(y-jn)^m\right)z^{ts}.
$$
All remaining corrections come from normal ordering subwords $z^r x^n$ and thus are governed by the same coefficients $W^{(n)}_{\bullet,\bullet}$. This yields a unique PBW expansion of the form \eqref{Rdefblock2i}.

\item [\rm (5)] For $(x+y)^n$, write 
$$
(x+y)^n = \sum_{k=0}^n x^k S_{n,k}(y)
$$ 
and multiply on the left by $(x+y)$ to obtain
\begin{align*}
 (x+y)^{n+1} = &\, x\sum_{k}x^k S_{n,k}(y)+y\sum_{k}x^k S_{n,k}(y) \\
 = &\, \sum_{k}x^{k+1}S_{n,k}(y)+\sum_{k}x^k (y-k)S_{n,k}(y),
\end{align*}
using that $y x^k = x^k(y-k)$. Comparing coefficients of $x^k$ gives
$$
S_{n+1,k}(y) = S_{n,k-1}(y)+(y-k)S_{n,k}(y)
$$
and the stated initial conditions.

For 
$$
(y+z)^n = \sum_k T_{n,k}(y)z^k
$$
multiply on the left by $(y+z)$ and use $z f(y) = f(y-1)z$. Then
\begin{align*}
(y+z)^{n+1} = &\, y\sum_k T_{n,k}(y)z^k+z\sum_k T_{n,k}(y)z^k \\
= &\, \sum_k (yT_{n,k}(y))z^k+\sum_k T_{n,k}(y-1)z^{k+1}
\end{align*}
which yields 
$$
T_{n+1,k}(y) = yT_{n,k}(y)+T_{n,k-1}(y-1).
$$
Finally, for $(x+z)^n$ one argues exactly as in part (2): PBW gives the uniqueness of \eqref{Edefxz2i} and the recursion for $E_{n;i,k}(y)$ is obtained by expanding $(x+z)^{n+1}=(x+z)^n(x+z)$ and repeatedly normal ordering every subword $zx^r$ using \eqref{znxmW2i}.
\end{enumerate}
\end{proof}

\subsubsection{Algebras of type (2)(ii)}

\begin{proposition}\label{case(2)(ii)}
Let $A$ be the $\Bbbk$-algebra generated by $x,y,z$ with relations
\[
yz - zy = z, \quad zx - \beta xz = b \quad {\rm and} \quad xy - yx = x,
\]
where $\beta\in\Bbbk^{\times}$ and $b\in\Bbbk$. Then:
\begin{enumerate}
\item [\rm (1)] For all $m,n\in\mathbb{N}$,
\[
y^n x^m = x^m (y-m)^n \quad {\rm and}\quad z^n y^m = (y-n)^m z^n.
\]
Furthermore, for all $m,n\in\mathbb{N}$ we have the closed PBW normal form given by
\begin{equation}\label{znxmclosedconstb2ii}
z^n x^m = \sum_{k=0}^{\min\{m,n\}}
\beta^{(m-k)(n-k)}b^k\begin{bmatrix} n\\ k\end{bmatrix}_{\beta}\begin{bmatrix} m\\ k\end{bmatrix}_{\beta}[k]_{\beta}!x^{m-k}z^{n-k}.
\end{equation}
Define the elements $W^{(m)}_{n,k}\in\Bbbk$ as follows: 
\begin{equation}\label{Wdefconstb2ii}
z^n x^m = \sum_{k=0}^{\min\{m,n\}} x^{m-k}W^{(m)}_{n,k}z^{n-k}.
\end{equation}
Then $W^{(m)}_{0,0}=1$, $W^{(m)}_{0,k}=0$ $(k\ge 1)$ and we get the recurrence relation given by
\begin{equation}\label{Wrecconstb2ii}
W^{(m)}_{n+1,k}=\beta^{m-k}W^{(m)}_{n,k}+b[m-k+1]_{\beta}W^{(m)}_{n,k-1}, \quad  \text{for } n\ge 0,\ k\ge 0,
\end{equation}
with $W^{(m)}_{n,-1}:=0$.

\item [\rm (2)] For all $m, n, s \in \mathbb{N}$, we have that
\[
(x^n y^m)^s = x^{ns}\prod_{j=0}^{s-1}(y-jn)^m \quad {\rm and} \quad (y^n z^m)^s = \left(\prod_{j=0}^{s-1}(y-jm)^n\right)z^{ms}.
\]
Moreover, $(x^n z^m)^s$ has a unique PBW expansion as 
\begin{equation}\label{xnzmUconstb2ii}
(x^n z^m)^s = \sum_{\ell=0}^{\min\{ns,ms\}} x^{ns -\ell}U^{(n,m)}_{s,\ell}z^{ms-\ell}, \quad U^{(n,m)}_{s,\ell}\in\Bbbk,
\end{equation}
with initial conditions $U^{(n,m)}_{1,0}=1$, $U^{(n,m)}_{1,\ell}=0$ for $\ell\ge 1$ and the explicit recursion
\begin{equation}\label{Urecconstb2ii}
U^{(n,m)}_{s+1,\ell} = \sum_{k=0}^{\min\{n,\ell\}}U^{(n,m)}_{s,\ell-k}W^{(n)}_{ms-(\ell-k),k}, \quad \text{for } s\ge 1, \ell\ge 0,
\end{equation}
where $W^{(n)}_{r,k}$ are the normal ordering coefficients from \eqref{Wdefconstb2ii} {\rm (}with $m$ replaced by $n${\rm )}.

\item [\rm (3)] There exist uniquely determined polynomials $V_{s,\ell}(y)\in\Bbbk[y]$ for $0\le \ell\le s$ such that
\begin{equation}\label{Vdefconstb2ii}
(xyz)^s=\sum_{\ell=0}^{s} x^{s-\ell}V_{s,\ell}(y)z^{s-\ell}.
\end{equation}
All of them satisfy that $V_{1,0}(y) = y$, $V_{1,\ell}(y) = 0$ for $\ell\ge 1$ and the explicit recursion
\begin{equation}\label{Vrecconstb2ii}
V_{s+1,\ell}(y) =(y-(s-\ell))\Big(\beta^{s-\ell}V_{s,\ell}(y-1)+b[s-\ell+1]_\beta  V_{s,\ell-1}(y)\Big), 
\end{equation}
for $s\ge 1$ and $\ell\ge 0$, where $V_{s,-1}(y):=0$.

\item[\rm (4)] For all $n, m, t, s\in\mathbb{N}$ and  $0\le \ell\le \min\{ns,ts\}$, there exist uniquely determined polynomials
$R^{(n,m,t)}_{s,\ell}(y)\in\Bbbk[y]$,  such that
\begin{equation}\label{Rdefconstb2ii}
(x^n y^m z^t)^s = \sum_{\ell=0}^{\min\{ns,ts\}} x^{ns-\ell}R^{(n,m,t)}_{s,\ell}(y)z^{ts-\ell}.
\end{equation}
They satisfy $R^{(n,m,t)}_{1,0}(y)=y^m$, $R^{(n,m,t)}_{1,\ell}(y)=0$ for $\ell\ge 1$ and 
\begin{equation}\label{Rrecconstb2ii}
R^{(n,m,t)}_{s+1,\ell}(y)=(y-(ts-\ell))^{m} \sum_{k=0}^{\min\{n,\ell\}}R^{(n,m,t)}_{s,\ell-k}\big(y-(n-k)\big)
W^{(n)}_{ts-(\ell-k),k},
\end{equation}
for $s\ge 1$ and $\ell\ge 0$, where $W^{(n)}_{r,k}$ are as in \eqref{Wdefconstb2ii}.

\item[\rm (5)] There exist uniquely determined polynomials $S_{n,k}(y),T_{n,k}(y)\in\Bbbk[y]$ such that
\[
(x+y)^n = \sum_{k=0}^{n} x^{k}S_{n,k}(y) \quad {\rm and} \quad (y+z)^n=\sum_{k=0}^{n} T_{n,k}(y)z^{k},
\]
characterized by the following explicit recursions:
\[
S_{0,0}(y) = 1,\ S_{0,k}(y)=0\ (k\ge 1),\quad S_{n+1,k}(y)=S_{n,k-1}(y)+(y-k)S_{n,k}(y),
\]
and 
\[
T_{0,0}(y)=1,\ T_{0,k}(y)=0\ (k\ge 1),\quad T_{n+1,k}(y)=yT_{n,k}(y)+T_{n,k-1}(y-1).
\]
The expression $(x+z)^n$ has a unique PBW expansion given by
\begin{equation}\label{Edefconstb2ii}
(x+z)^n = \sum_{\substack{i,k\ge 0\\ i+k\le n}} x^{i}E_{n;i,k}z^{k}, \quad E_{n;i,k}\in\Bbbk[b],
\end{equation}
with $E_{0;0,0}=1$ and $E_{0;i,k}=0$ for $(i,k)\neq(0,0)$, and the explicit recursion
\begin{equation}\label{Erecconstb2ii}
E_{n+1;i,k}=E_{n;i,k-1}+\beta^{k}E_{n;i-1,k}+b[k+1]_\beta E_{n;i,k+1},
\end{equation}
where we set $E_{n;i,k} = 0$ whenever $i < 0$ or $k < 0$ or $i + k > n$.
\end{enumerate}
\end{proposition}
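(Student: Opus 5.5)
The plan is to follow the proof of Proposition \ref{case(2)(i)} closely, with two simplifications: the commutator $zx-\beta xz$ is now the scalar $b$ rather than $y$, and $z$ no longer interacts with $y$ through that relation. The relations $xy-yx=x$ and $yz-zy=z$ are unchanged. They give $f(y)x=xf(y-1)$ and $zf(y)=f(y-1)z$ for every $f\in\Bbbk[y]$. Iterating these yields $y^nx^m=x^m(y-m)^n$ and $z^ny^m=(y-n)^mz^n$ exactly as before, and the same computations give the identities for $(x^ny^m)^s$ and $(y^nz^m)^s$ in part (2) by induction on $s$.

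\textbf{Part (1).} The core is the two-letter identity for $z^nx^m$.
\begin{itemize}
\item From $zx=\beta xz+b$, induction on $m$ gives $zx^m=\beta^m x^mz+b[m]_\beta x^{m-1}$. The inductive step is $zx^{m+1}=(\beta^mx^mz+b[m]_\beta x^{m-1})x=\beta^{m+1}x^{m+1}z+b(\beta^m+[m]_\beta)x^m$, and $\beta^m+[m]_\beta=[m+1]_\beta$.
\item Define $W^{(m)}_{n,k}$ by \eqref{Wdefconstb2ii}; these are scalars, since no $y$ is ever produced. Multiply on the left by $z$ and apply the formula above to each $zx^{m-k}$.
\item Reindexing and comparing coefficients via the PBW basis gives \eqref{Wrecconstb2ii}.
\end{itemize}
The step I expect to be the main obstacle is the closed form \eqref{znxmclosedconstb2ii}. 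I would verify that the proposed $W^{(m)}_{n,k}=\beta^{(m-k)(n-k)}b^k\left[\begin{smallmatrix} n\\ k\end{smallmatrix}\right]_\beta\left[\begin{smallmatrix} m\\ k\end{smallmatrix}\right]_\beta[k]_\beta!$ satisfies the recurrence and the initial conditions.
\begin{itemize}
\item After dividing by $b^k\left[\begin{smallmatrix} m\\ k\end{smallmatrix}\right]_\beta[k]_\beta!$, and using $\left[\begin{smallmatrix} m\\ k-1\end{smallmatrix}\right]_\beta[k-1]_\beta![m-k+1]_\beta=\left[\begin{smallmatrix} m\\ k\end{smallmatrix}\right]_\beta[k]_\beta!$, the recurrence becomes
\[
\beta^{(m-k)(n+1-k)}\begin{bmatrix} n+1\\ k\end{bmatrix}_\beta=\beta^{m-k}\beta^{(m-k)(n-k)}\begin{bmatrix} n\\ k\end{bmatrix}_\beta+\beta^{(m-k+1)(n-k+1)}\begin{bmatrix} n\\ k-1\end{bmatrix}_\beta.
\]
\item This must be reconciled with a $q$-Pascal identity of the form $\left[\begin{smallmatrix} n+1\\ k\end{smallmatrix}\right]_\beta=\left[\begin{smallmatrix} n\\ k\end{smallmatrix}\right]_\beta+\beta^{n-k+1}\left[\begin{smallmatrix} n\\ k-1\end{smallmatrix}\right]_\beta$, or its mirror version. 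I expect the powers of $\beta$ to need careful bookkeeping here.
\item If the exponents fail to match, I would check small cases, such as $n=m=1$ (where $zx=\beta xz+b$) and $n=2,m=1$, to pin down the correct normalisation. I would then prove the closed form by a direct count instead: each $z$ either passes an $x$ (contributing a factor $\beta$) or annihilates it (contributing $b$), which gives the $q$-binomial count.
\end{itemize}

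\textbf{Part (2).} For $(x^nz^m)^s$, I would write $(x^nz^m)^{s+1}=(x^nz^m)^s\,x^nz^m$ and substitute the PBW expansion \eqref{xnzmUconstb2ii}. Each term contains a subword $z^{ms-(\ell-k)}x^n$, which is reordered with \eqref{Wdefconstb2ii}. Because the $W$'s are scalars and no $y$ appears, collecting the coefficient of $x^{n(s+1)-\ell}z^{m(s+1)-\ell}$ gives \eqref{Urecconstb2ii} directly.

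\textbf{Parts (3) and (4).} I would write $(x^ny^mz^t)^{s+1}=(x^ny^mz^t)^s\,x^ny^mz^t$ and substitute \eqref{Rdefconstb2ii}, so that each term is $x^{ns-j}R_{s,j}(y)z^{ts-j}x^ny^mz^t$.
\begin{itemize}
\item Expand $z^{ts-j}x^n$ by $W^{(n)}$, which produces $x^{n-k}z^{ts-j-k}$.
\item Move $x^{n-k}$ leftwards past $R_{s,j}(y)$, which shifts it to $R_{s,j}(y-(n-k))$.
\item Move $y^m$ leftwards past $z^{ts-j-k}$, which gives $(y-(ts-j-k))^m$.
\end{itemize}
With $j=\ell-k$ this yields \eqref{Rrecconstb2ii}, after noting that the polynomials in $y$ commute with one another. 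Part (3) is the special case $n=m=t=1$. Here $\min\{1,\ell\}$ gives two terms: $W^{(1)}_{r,0}=\beta^{r}$ and $W^{(1)}_{r,1}=b[r]_\beta$ (that is, $b[1]_\beta[r]_\beta$ from the recursion). Substituting these gives \eqref{Vrecconstb2ii}.

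\textbf{Part (5).} The recursions for $S_{n,k}$ and $T_{n,k}$ use only the $xy$ and $yz$ relations, so they are copied verbatim from Proposition \ref{case(2)(i)}(5). For $E_{n;i,k}$, I would multiply \eqref{Edefconstb2ii} on the right by $x+z$:
\begin{itemize}
\item the term $x^iE\,z^k\cdot z$ contributes $E_{n;i,k-1}$;
\item the term $x^iE\,z^kx=x^iE(\beta^kxz^k+b[k]_\beta z^{k-1})$ contributes $\beta^kE_{n;i-1,k}$ and $b[k+1]_\beta E_{n;i,k+1}$ after reindexing.
\end{itemize}
PBW uniqueness then gives \eqref{Erecconstb2ii}.
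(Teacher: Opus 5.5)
Your proposal is correct and follows essentially the same route as the paper: induction with PBW uniqueness, normal ordering each subword $z^{r}x^{n}$ through the scalar coefficients $W^{(n)}_{r,k}$ together with the shift rules $f(y)x=xf(y-1)$ and $zf(y)=f(y-1)z$, and checking the closed form for $z^nx^m$ against the recurrence via a $q$-Pascal identity. The only difference is cosmetic: you obtain part (3) as the case $n=m=t=1$ of part (4), where the paper computes it directly.

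Your hedging on the closed form is unnecessary. Factor out the common factor $b^k\left[\begin{smallmatrix} m\\ k\end{smallmatrix}\right]_\beta[k]_\beta!$ and divide your reduced identity by the unit $\beta^{(m-k)(n-k+1)}$. It then becomes exactly $\left[\begin{smallmatrix} n+1\\ k\end{smallmatrix}\right]_\beta=\left[\begin{smallmatrix} n\\ k\end{smallmatrix}\right]_\beta+\beta^{n-k+1}\left[\begin{smallmatrix} n\\ k-1\end{smallmatrix}\right]_\beta$, so the exponents match with no adjustment and the fallback small-case checks and direct count are not needed.
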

\begin{proof}
\begin{enumerate}
    \item [\rm (1)]  From $yx = xy-x$ we get $yx = x(y-1)$. For $f(y)\in\Bbbk[y]$,
\begin{equation}\label{shiftxcase2ii}
f(y)  x^m=x^m f(y-m) \text{ for } m\in\mathbb{N},
\end{equation}        
and taking $f(y) = y^n$ yields $y^n x^m=x^m(y-m)^n$.

From $zy = yz - z = (y-1)z$ we obtain that
\begin{equation}\label{shiftzcase2ii}
z^n f(y)=f(y-n)  z^n\text{ for } n\in\mathbb{N},\ f\in\Bbbk[y],
\end{equation}
and so $z^n y^m = (y - n)^m z^n$.

Next, using $zx = \beta xz+b$ and induction on $m$, it follows the identity
\begin{equation}\label{zxmcase2ii}
z x^m=\beta^m x^m z+b  [m]_\beta  x^{m-1}, \quad {\rm for} \, m\ge 1.
\end{equation}
Indeed, if \eqref{zxmcase2ii} holds for $m-1$, then
\begin{align*}
z x^m &\ =(zx)x^{m-1}=(\beta xz+b)x^{m-1} =\beta x(zx^{m-1})+b x^{m-1} \\
&\ =\beta^m x^m z+b(1+\beta+\cdots+\beta^{m-1})x^{m-1}.
\end{align*}
To prove \eqref{znxmclosedconstb2ii}, we proceed by induction on $n$ repeatedly using \eqref{zxmcase2ii}. Writing $z^{n+1}x^m = z(z^2n x^m)$ and expanding with \eqref{Wdefconstb2ii} yields a triangular recursion for the coefficients $W^{(m)}_{n,k}$, namely \eqref{Wrecconstb2ii}. Solving this recursion (or checking directly
by induction on $n$ using \eqref{zxmcase2ii} together with the $q$-Pascal identities for $[  \cdot  ]_\beta$)
gives the closed form \eqref{znxmclosedconstb2ii}. The alternative expression using Gaussian binomials follows from
\[
\frac{[r]^{\underline{k}}_\beta}{[k]_\beta!}
=\frac{[r]_\beta!}{[r-k]_\beta!  [k]_\beta!}
=\begin{bmatrix} r\\ k\end{bmatrix}_\beta.
\]

\item [\rm (2)] Using \eqref{shiftxcase2ii} with $f(y) = y^m$ yields $y^m x^n = x^n (y-n)^m$, and so
\begin{align*}
  (x^n y^m)^{s+1} = &\ (x^n y^m)^s x^n y^m  \\   
  = &\ x^{ns}\left(\prod_{j=0}^{s-1}(y-jn)^m\right)x^n y^m \\
  = &\ x^{n(s+1)}\prod_{j=0}^{s}(y-jn)^m, 
\end{align*}
which proves $(x^n y^m)^s$ by induction on $s$. The formula for $(y^n z^m)^s$ is analogous, using $z^m y^n = (y - m)^n z^m$ (which is \eqref{shiftzcase2ii}).

For $(x^n z^m)^s$, PBW gives the expansion \eqref{xnzmUconstb2ii}. Multiply \eqref{xnzmUconstb2ii} by $x^n z^m$ on the right and normal order each subword $z^{ms-(\ell-k)}x^n$ using \eqref{Wdefconstb2ii}. Since all coefficients are scalars (no $y$ occurs in $zx - \beta xz = b$), grouping the resulting terms by the total loss index yields the explicit recursion \eqref{Urecconstb2ii}.

\item [\rm (3)] Suppose that \eqref{Vdefconstb2ii} for some $s$. Multiply by $xyz$ on the right
\[
(xyz)^{s+1} = \sum_{\ell} x^{s-\ell}V_{s,\ell}(y)z^{s-\ell}xyz.
\]
and set $r:=s-\ell$. Using $z^{r}x=\beta^{r}x z^{r}+b[r]_\beta z^{r-1}$ (the case $m=1$ of \eqref{znxmclosedconstb2ii} or a direct induction from $zx=\beta xz+b$), we obtain two contributions.
For the first one, commute $V_{s,\ell}(y)$ past $x$ using $f(y)x=x f(y-1)$ and then commute $z^r$ past $y$ using
$z^r y=(y-r)z^r$ (from \eqref{shiftzcase2ii}), to get
\[
\beta^{r}(y-r)  x^{s-\ell+1}V_{s,\ell}(y-1)z^{r+1}.
\]
For the second one, use again $z^{r-1}y=(y-r+1)z^{r-1}$ and so 
\[
b[r]_\beta (y-r)x^{s-\ell}V_{s,\ell}(y)z^{r}.
\]
Reindexing the latter as a contribution to $V_{s+1,\ell}(y)$ coming from $V_{s,\ell-1}(y)$ yields exactly \eqref{Vrecconstb2ii}.

\item [\rm (4)] Write $B:=x^n y^m z^t$. Assuming \eqref{Rdefconstb2ii} for $s$, multiply by $B$ on the right:
\[
B^{s+1}=\sum_{\ell} x^{ns-\ell}R_{s,\ell}(y)  z^{ts-\ell}x^n y^m z^t 
\quad {\rm and} \quad R_{s,\ell}(y):=R^{(n,m,t)}_{s,\ell}(y).
\]
Put $r := ts - l$ and normal order $z^{r}x^n$ using \eqref{Wdefconstb2ii} (with $m$ replaced by $n$). Then commute $R_{s,\ell}(y)$ past $x^{n-k}$ using $f(y)x^{n-k}=x^{n-k}f(y-(n-k))$ and commute the remaining
$z^{r-k}$ past $y^m$ using $z^{r-k}y^m=(y-(r-k))^m z^{r-k}$. Grouping by the total loss index gives the recursion \eqref{Rrecconstb2ii}.

\item [\rm (5)] For $(x+y)^n$, write
$$
(x+y)^n = \sum_k x^k S_{n,k}(y)
$$ 
and multiply by $(x+y)$ on the left. Then
\begin{align*}
(x+y)^{n+1} = &\, x\sum_k x^k S_{n,k}(y)+y\sum_k x^k S_{n,k}(y) \\
= &\, \sum_k x^{k+1}S_{n,k}(y)+\sum_k x^k (y-k)S_{n,k}(y).
\end{align*}
using that $yx^k = x^k(y-k)$. Comparing coefficients yields the recursion for $S_{n,k}(y)$. 

For $(y+z)^n = \sum_k T_{n,k}(y)z^k$, multiply by $(y+z)$ on the left and use $z f(y)=f(y-1)z$ to obtain 
$$
T_{n+1,k}(y)=yT_{n,k}(y)+T_{n,k-1}(y-1).
$$
Finally, for $(x+z)^n$ write \eqref{Edefconstb2ii}, and expand
\[
(x+z)^{n+1}=(x+z)^n x+(x+z)^n z,
\]
and normal order $z^k x$ using $z^k x=\beta^k x z^k+b[k]_\beta z^{k-1}$. Collecting the coefficients of $x^i z^k$ gives \eqref{Erecconstb2ii}.
\end{enumerate}
\end{proof}

\subsubsection{Algebras of type (2)(iii)}

\begin{proposition}\label{case(2)(iii)}
Let $A$ be the $\Bbbk$-algebra generated by $x,y,z$ with relations
\[
yz - zy=0,\quad zx - \beta xz = y \quad {\rm and} \quad xy - yx = 0, \quad {\rm where} \ \beta \in \Bbbk^{\times}. 
\]
Then the following hold:
\begin{enumerate}
\item[\rm (1)] For all $m,n\in\mathbb{N}$,
\[
y^n x^m = x^m y^n \quad {\rm and} \quad z^n y^m = y^m z^n.
\]
Moreover, for all $m,n\in\mathbb{N}$ we have the PBW normal form
\begin{equation}\label{znxmclosedycentral2iii}
z^n x^m =\sum_{k=0}^{\min\{m,n\}}
\beta^{(m-k)(n-k)}\begin{bmatrix} n\\ k\end{bmatrix}_{\beta}
\begin{bmatrix} m\\ k\end{bmatrix}_{\beta}[k]_{\beta}!x^{m-k}y^{k}z^{n-k}.
\end{equation}
Equivalently, defining $W^{(m)}_{n,k}(y)\in\Bbbk[y]$ by
\begin{equation}\label{Wdefycentral2iii}
z^n x^m=\sum_{k=0}^{\min\{m,n\}} x^{m-k}W^{(m)}_{n,k}(y)z^{n-k},
\end{equation}
then we get $W^{(m)}_{0,0}(y)=1$, $W^{(m)}_{0,k}(y)=0$ for $k\ge 1$ and the explicit recursion
\begin{equation}\label{Wrecycentral2iii}
W^{(m)}_{n+1,k}(y) =\beta^{m-k} W^{(m)}_{n,k}(y)+y [m-k+1]_\beta W^{(m)}_{n,k-1}(y) \text{ for } n, k\ge 0,
\end{equation}
with $W^{(m)}_{n,-1}(y):=0$.

\item[\rm (2)] For all $m,n,s\in\mathbb{N}$,
\[
(x^n y^m)^s=x^{ns}y^{ms} \quad {\rm and} \quad (y^n z^m)^s=y^{ns}z^{ms}.
\]
Moreover, $(x^n z^m)^s$ has a unique PBW expansion
\begin{equation}\label{xnzmUycentral2iii}
(x^n z^m)^s=\sum_{\ell=0}^{\min\{ns,ms\}} x^{ns-\ell}U^{(n,m)}_{s,\ell}(y)z^{ms-\ell}, \quad U^{(n,m)}_{s,\ell}(y)\in\Bbbk[y],
\end{equation}
with $U^{(n,m)}_{1,0}(y)=1$, $U^{(n,m)}_{1,\ell}(y)=0$ for $\ell\ge 1$ and the explicit recursion
\begin{equation}\label{Urecycentral2iii}
U^{(n,m)}_{s+1,\ell}(y) =\sum_{k=0}^{\min\{n,\ell\}}U^{(n,m)}_{s,\ell-k}(y) W^{(n)}_{ms-(\ell-k),k}(y), \text{ for } s\ge 1,\ \ell\ge 0,
\end{equation}
where $W^{(n)}_{r,k}(y)$ are the normal ordering coefficients from \eqref{Wdefycentral2iii} {\rm (}with $m$ replaced by $n${\rm )}.

\item[\rm (3)] There exist uniquely determined polynomials $V_{s,\ell}(y)\in\Bbbk[y]$ for $0\le \ell\le s$ such that
\begin{equation}\label{Vdefycentral2iii}
(xyz)^s=\sum_{\ell=0}^{s} x^{s-\ell}V_{s,\ell}(y)z^{s-\ell}.
\end{equation}
They satisfy $V_{1,0}(y)=y$, $V_{1,\ell}(y)=0$ for $\ell\ge 1$ and the explicit recursion
\begin{equation}\label{Vrecycentral2iii}
V_{s+1,\ell}(y)=\beta^{s-\ell}yV_{s,\ell}(y)+[s-\ell+1]_\beta y^{2}V_{s,\ell-1}(y),\text{ for } s\ge 1,\ \ell\ge 0,
\end{equation}
with $V_{s,-1}(y):=0$.

\item[\rm (4)] There exist uniquely determined polynomials
$R^{(n,m,t)}_{s,\ell}(y)\in\Bbbk[y]$, for all $n,m,t,s\in\mathbb{N}$ and $0\le \ell\le \min\{ns,ts\}$ such that
\begin{equation}\label{Rdefycentral2iii}
(x^n y^m z^t)^s=\sum_{\ell=0}^{\min\{ns,ts\}} x^{ns-\ell}R^{(n,m,t)}_{s,\ell}(y)z^{ts-\ell}.
\end{equation}
They satisfy $R^{(n,m,t)}_{1,0}(y)=y^{m}$, $R^{(n,m,t)}_{1,\ell}(y)=0$ for $\ell\ge 1$ and the explicit recursion
\begin{equation}\label{Rrecycentral2iii}
R^{(n,m,t)}_{s+1,\ell}(y)=y^{m}\sum_{k=0}^{\min\{n,\ell\}}R^{(n,m,t)}_{s,\ell-k}(y)W^{(n)}_{ts-(\ell-k), k}(y), \text{ for } s\ge 1,\ \ell\ge 0,
\end{equation}
where $W^{(n)}_{r,k}(y)$ are as in \eqref{Wdefycentral2iii}.

\item[\rm (5)] Since $x$ and $y$ commute and $y$ and $z$ commute, we have the ordinary binomial identities
\[
(x+y)^n=\sum_{k=0}^{n}\binom{n}{k}x^{n-k}y^k,\quad (y+z)^n=\sum_{k=0}^{n}\binom{n}{k}y^{n-k}z^k.
\]
Finally, $(x+z)^n$ admits a unique PBW expansion
\begin{equation}\label{Edefycentral2iii}
(x+z)^n=\sum_{\substack{i,k\ge 0\\ i+k\le n}} x^{i}E_{n;i,k}(y)z^{k}, \quad E_{n;i,k}(y)\in\Bbbk[y],
\end{equation}
with $E_{0;0,0}(y)=1$, $E_{0;i,k}(y)=0$ for $(i,k)\neq(0,0)$ and the explicit recursion
\begin{equation}\label{Erecycentral2iii}
E_{n+1;i,k}(y) = E_{n;i,k-1}(y)+\beta^{k} E_{n;i-1,k}(y)+y[k+1]_{\beta} E_{n;i,k+1}(y),
\end{equation}
where $E_{n;i,k}(y)=0$ if $i<0$ or $k<0$ or $i+k>n$.
\end{enumerate}
\end{proposition}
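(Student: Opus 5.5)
Throughout, the plan rests on the fact that in this algebra $y$ is central: $xy=yx$ and $yz=zy$. Hence every polynomial $f(y)$ commutes with $x$ and $z$, and the only nontrivial rewriting rule is $zx=\beta xz+y$. The first half of (1), and the identities $(x^ny^m)^s=x^{ns}y^{ms}$, $(y^nz^m)^s=y^{ns}z^{ms}$ in (2), follow at once from commutativity of the relevant pairs. The two binomial formulas for $(x+y)^n$ and $(y+z)^n$ in (5) are the ordinary binomial theorem for commuting elements. Existence and uniqueness of every stated expansion follow from the PBW basis in Definition \ref{3dimensionaldimension}(ii), so only the recursions and the closed form need arguing.

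For (1) I would first prove, by induction on $m$ as in the proof of Proposition \ref{case(2)(ii)}, the identity
\[
zx^m=\beta^m x^m z+[m]_\beta\, y\, x^{m-1}.
\]
The step uses $zx^m=(\beta xz+y)x^{m-1}$ and the centrality of $y$. Next, assume \eqref{Wdefycentral2iii} for $n$ and write $z^{n+1}x^m=\sum_k z x^{m-k}W^{(m)}_{n,k}(y)z^{n-k}$. Apply the identity to $zx^{m-k}$ and move $W^{(m)}_{n,k}(y)$ freely, since $y$ is central. The term $\beta^{m-k}x^{m-k}W^{(m)}_{n,k}z^{n+1-k}$ contributes to index $k$. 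The term $y[m-k]_\beta x^{m-k-1}W^{(m)}_{n,k}z^{n-k}$ contributes to index $k+1$; after reindexing it gives $y[m-k+1]_\beta W^{(m)}_{n,k-1}$. Comparing PBW coefficients then yields \eqref{Wrecycentral2iii}.

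For the closed form \eqref{znxmclosedycentral2iii}, I set $W^{(m)}_{n,k}=C_{n,k}y^k$. Using $\begin{bmatrix} m\\ k\end{bmatrix}_\beta[k]_\beta!=[m]^k_\beta$, this gives
\[
C_{n,k}=\beta^{(m-k)(n-k)}\begin{bmatrix} n\\ k\end{bmatrix}_\beta[m]^k_\beta .
\]
I then check that $C$ satisfies the scalar version $C_{n+1,k}=\beta^{m-k}C_{n,k}+[m-k+1]_\beta C_{n,k-1}$ of \eqref{Wrecycentral2iii}. Since $[m-k+1]_\beta[m]^{k-1}_\beta=[m]^k_\beta$, the right-hand side factors as
\[
\beta^{(m-k)(n+1-k)}[m]^k_\beta\left(\begin{bmatrix} n\\ k\end{bmatrix}_\beta+\beta^{\,n+1-k}\begin{bmatrix} n\\ k-1\end{bmatrix}_\beta\right).
\]
By the $q$-Pascal rule the bracket equals $\begin{bmatrix} n+1\\ k\end{bmatrix}_\beta$, which is the claimed value of $C_{n+1,k}$. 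The initial conditions agree, so the closed form holds. I expect the main obstacle to be this exponent bookkeeping, specifically choosing the correct one of the two $q$-Pascal identities.

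Parts (2)–(4) and the $(x+z)^n$ part of (5) all follow one template. Assume the expansion for $s$ (respectively $n$), multiply on the right by the block, and normal order each subword $z^{r}x^{n}$ by \eqref{Wdefycentral2iii}. Centrality of $y$ lets all polynomial coefficients, and the factor $y^m$ in (4), pass through unchanged; this is why no shifts $y\mapsto y-c$ appear, unlike in Proposition \ref{case(2)(i)}. One then groups terms by the total loss index $\ell$.

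\begin{itemize}
\item For (3), put $r=s-\ell$ and use $z^rx=\beta^rxz^r+[r]_\beta y z^{r-1}$, the case $m=1$ of (1). The term $\beta^{s-\ell}yV_{s,\ell}$ stays at index $\ell$. The term $[s-\ell]_\beta y^2V_{s,\ell}$ moves to index $\ell+1$; after reindexing this gives \eqref{Vrecycentral2iii}.
\item For (2) and (4), the resulting sum $\sum_k U_{s,\ell-k}W^{(n)}_{ms-(\ell-k),k}$ gives \eqref{Urecycentral2iii}, and inserting the central factor $y^m$ gives \eqref{Rrecycentral2iii}.
\item For $(x+z)^n$, right-multiply by $x+z$ and use $z^kx=\beta^kxz^k+[k]_\beta yz^{k-1}$. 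Collecting the coefficient of $x^iz^k$ gives \eqref{Erecycentral2iii}.
\end{itemize}
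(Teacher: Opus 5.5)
Your proposal is correct and follows essentially the same route as the paper. It rests on the centrality of $y$ and the identity $zx^m=\beta^m x^m z+[m]_\beta\, y\,x^{m-1}$, then right-multiplies by the block and groups terms by the loss index. The only addition is that you carry out the verification of the closed form for $z^nx^m$ via the $q$-Pascal rule $\begin{bmatrix} n+1\\ k\end{bmatrix}_\beta=\begin{bmatrix} n\\ k\end{bmatrix}_\beta+\beta^{n+1-k}\begin{bmatrix} n\\ k-1\end{bmatrix}_\beta$, a step the paper only asserts.
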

\begin{proof}
\begin{enumerate}
\item [\rm (1)] The first two identities are immediate from commutativity.
Next, from $zx=\beta xz+y$ and centrality of $y$ we get by induction on $m$:
\[
z x^m=\beta^m x^m z+[m]_\beta x^{m-1}y.
\]
Iterating this normal ordering step and collecting terms gives the recursion \eqref{Wrecycentral2iii} for the coefficients in \eqref{Wdefycentral2iii}. Solving \eqref{Wrecycentral2iii} (equivalently, proving by double induction on $(n,m)$ using the $q$-Pascal identities for $[ \cdot ]_\beta$) yields the closed form \eqref{znxmclosedycentral2iii}; the alternative expression follows from $\frac{[r]^{\underline{k}}_\beta}{[k]_\beta!}=\begin{bmatrix}r\\k\end{bmatrix}_\beta$.

\item [\rm (2)] Since $y$ is central, 
$$
(x^n y^m)^s = x^{ns}y^{ms} \quad {\rm and} \quad (y^n z^m)^s=y^{ns}z^{ms}.
$$
For $(x^n z^m)^s$, PBW gives \eqref{xnzmUycentral2iii}. Multiply \eqref{xnzmUycentral2iii} by $x^n z^m$ on the right and normal order each subword $z^{ms-(\ell-k)}x^n$ using \eqref{Wdefycentral2iii}; collecting coefficients of $x^{n(s+1)-\ell}z^{m(s+1)-\ell}$ gives \eqref{Urecycentral2iii}.

\item [\rm (3)] Assume \eqref{Vdefycentral2iii} holds for $s$. Multiply by $xyz$ on the right and set $r:=s-\ell$:
\[
(xyz)^{s+1}=\sum_{\ell}x^{s-\ell}V_{s,\ell}(y)z^{r}xyz.
\]
Using $z^{r}x=\beta^{r}x z^{r}+y[r]_\beta z^{r-1}$ (proved by induction from $zx=\beta xz+y$) and centrality of $y$,
we obtain two contributions:
\[
\beta^{r}x^{s-\ell+1}yV_{s,\ell}(y)z^{r+1} \text{ and } [r]_\beta x^{s-\ell}y^{2}V_{s,\ell}(y)z^{r}.
\]
The second term contributes to the next loss index ($\ell\mapsto \ell+1$) and collecting coefficients yields \eqref{Vrecycentral2iii}.

\item [\rm (4)] Let $B:=x^n y^m z^t$. Using centrality of $y$, PBW gives \eqref{Rdefycentral2iii}.
Assume it holds for $s$ and multiply by $B$ on the right. Normal order each $z^{ts-\ell}x^n$ using \eqref{Wdefycentral2iii} and pull out the factor $y^m$ (since $y$ is central). Collecting by the total loss index gives \eqref{Rrecycentral2iii}.

\item [\rm (5)] The first two binomial identities are classical since the relevant pairs commute.
For $(x+z)^n$, write \eqref{Edefycentral2iii} and expand
$(x+z)^{n+1}=(x+z)^n x+(x+z)^n z$. Normal order $z^k x$ via $z^k x=\beta^k x z^k+y[k]_\beta z^{k-1}$ and collect coefficients of $x^i z^k$ to obtain
\eqref{Erecycentral2iii}.
\end{enumerate}
\end{proof}

\subsubsection{Algebras of type (2)(iv)}

\begin{proposition}\label{case(2)(iv)}
Let $A$ be the $\Bbbk$-algebra generated by $x, y, z$ with relations
\[
yz - zy = 0, \quad zx - \beta xz = b \quad {\rm and} \quad xy - yx = 0,
\]
where $\beta\in\Bbbk^{\times}$ and $b\in\Bbbk$. Then the following hold;
\begin{enumerate}
\item[\rm (1)] For all $m,n\in \mathbb{N}$,
\[
y^n x^m = x^m y^n \quad {\rm and} \quad z^n y^m = y^m z^n.
\]
Moreover, for all $m, n\in \mathbb{N}$, 
\begin{equation}\label{znxmclosedconstbcomm2iv}
z^n x^m =\sum_{k=0}^{\min\{m,n\}}\beta^{(m-k)(n-k)}b^k\begin{bmatrix} n\\ k\end{bmatrix}_{\beta}\begin{bmatrix} m\\ k\end{bmatrix}_{\beta}
[k]_{\beta}!x^{m-k}z^{n-k}.
\end{equation}
Equivalently, defining scalars $W^{(m)}_{n,k}\in\Bbbk$ as 
\begin{equation}\label{Wdefconstbcomm2iv}
z^n x^m=\sum_{k=0}^{\min\{m,n\}} x^{m-k}W^{(m)}_{n,k}z^{n-k},
\end{equation}
then $W^{(m)}_{0,0} = 1$ and $W^{(m)}_{0,k} = 0$ for $k\ge 1$ together with the explicit recursion
\begin{equation}\label{Wrecconstbcomm2iv}
W^{(m)}_{n+1,k} = \beta^{m-k}W^{(m)}_{n,k}+b[m-k+1]_{\beta} W^{(m)}_{n,k-1},
\end{equation}
for $n\ge 0$ and $k\ge 0$ with $W^{(m)}_{n,-1}:=0$.

\item[\rm (2)] For all $m,n,s\in \mathbb{N}$,
\[
(x^n y^m)^s=x^{ns}y^{ms} \quad{\rm and} \quad (y^n z^m)^s=y^{ns}z^{ms}.
\]
Moreover,
\begin{equation}\label{xnzmUconstbcomm2iv}
(x^n z^m)^s = \sum_{\ell=0}^{\min\{ns,ms\}} x^{ns-\ell}U^{(n,m)}_{s,\ell}z^{ms-\ell},\quad U^{(n,m)}_{s,\ell}\in\Bbbk,
\end{equation}
with $U^{(n,m)}_{1,0} = 1$ and  $U^{(n,m)}_{1,\ell}=0$ for $\ell\ge 1$ and the explicit recursion
\begin{equation}\label{Urecconstbcomm2iv}
U^{(n,m)}_{s+1,\ell}=\sum_{k=0}^{\min\{n,\ell\}}U^{(n,m)}_{s,\ell-k}W^{(n)}_{ms-(\ell-k),k}, \quad {\rm for}\, \, s\ge 1,\ \ell\ge 0,
\end{equation}
where $W^{(n)}_{r,k}$ are the normal ordering scalars from \eqref{Wdefconstbcomm2iv} {\rm (}with $m$ replaced by $n${\rm )}.

\item[\rm (3)] Since $x$ and $y$ commute and $y$ and $z$ commute, we have that $(xyz)^s=y^{s}(xz)^s$. In particular, 
\begin{equation}\label{Vdefconstbcomm2iv}
(xyz)^s=\sum_{\ell=0}^{s} x^{s-\ell}V_{s,\ell}y^{s}z^{s-\ell}, \quad V_{s,\ell}\in\Bbbk,
\end{equation}
with $V_{1,0}=1$, $V_{1,\ell}=0$ for $\ell\ge 1$ and the explicit recursion
\begin{equation}\label{Vrecconstbcomm2iv}
V_{s+1,\ell}=\beta^{s-\ell}V_{s,\ell}+b[s-\ell+1]_{\beta}V_{s,\ell-1}, \text{ for } s\ge 1,\ \ell\ge 0,
\end{equation}
where $V_{s,-1}:=0$.

\item[\rm (4)] For $r,n\in \mathbb{N}$ and $k\in \mathbb{N}$, define
\begin{equation}\label{ThetaqWeyl2iv}
\Theta_{r,n}(k):={\beta}^{(n-k)(r-k)}b^{k}
\frac{[r]^{\underline{k}}_{\beta}[n]^{\underline{k}}_{\beta}}{[k]_{\beta}!}
=\beta^{(n-k)(r-k)}b^{k}\begin{bmatrix}r\\ k\end{bmatrix}_{\beta}\begin{bmatrix}n\\ k\end{bmatrix}_{\beta}[k]_{\beta}!,
\end{equation}
Then, for all $r,n\in \mathbb{N}$,
\begin{equation}\label{zrxnclosedqWeyl2iv}
z^{r}x^{n}=\sum_{k=0}^{\min\{r,n\}}\Theta_{r,n}(k)x^{n-k}z^{r-k}.
\end{equation}

For $s\ge 1$ and $0\le \ell\le \min\{ns,ts\}$, we define the closed coefficients
\begin{equation}\label{Cmultisumexplicit2iv}
\mathcal C^{(n,t)}_{s,\ell}:=\sum_{\substack{k_1,\dots,k_{s-1}\ge 0\\ k_1+\cdots+k_{s-1}=\ell}}
\ \prod_{j=1}^{s-1}\Theta_{tj-(k_1+\cdots+k_{j-1}),n}(k_j).
\end{equation}
For $s=1$ we set $\mathcal C^{(n,t)}_{1,0}=1$ and $\mathcal C^{(n,t)}_{1,\ell}=0$ for $\ell\ge 1$.

With this notation, 
\begin{equation}\label{xyzorder2iv}
(x^n y^m z^t)^s=\sum_{\ell=0}^{\min\{ns,ts\}}x^{ns-\ell}y^{ms}\mathcal C^{(n,t)}_{s,\ell}z^{ts-\ell}.
\end{equation}

\item[\rm (5)] Since $x$ and $y$ commute and $y$ and $z$ commute, then
\[
(x+y)^n=\sum_{k=0}^{n}\binom{n}{k}x^{n-k}y^k,\quad (y+z)^n=\sum_{k=0}^{n}\binom{n}{k}y^{n-k}z^k.
\]
Finally, 
\begin{equation}\label{Edefconstbcomm2iv}
(x+z)^n=\sum_{\substack{i,k\ge 0\\ i+k\le n}} x^{i}E_{n;i,k}z^{k}, \quad E_{n;i,k}\in\Bbbk[b],
\end{equation}
with $E_{0;0,0}=1$ and $E_{0;i,k}=0$ for $(i,k)\neq(0,0)$ and the explicit recursion
\begin{equation}\label{Erecconstbcomm2iv}
E_{n+1;i,k} = E_{n;i,k-1} +\beta^{k}E_{n;i-1,k}+b[k+1]_{\beta} E_{n;i,k+1},
\end{equation}
where $E_{n;i,k}=0$ if $i<0$ or $k<0$ or $i+k>n$.
\end{enumerate}
\end{proposition}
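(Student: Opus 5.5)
The plan is to reduce everything to the single rewriting rule $zx=\beta xz+b$ together with the centrality of $y$. Indeed, $yz=zy$ and $xy=yx$ make $y$ commute with both generators, so $y$ is central in $A$. This gives the identities $y^nx^m=x^my^n$ and $z^ny^m=y^mz^n$ in (1), $(x^ny^m)^s=x^{ns}y^{ms}$ and $(y^nz^m)^s=y^{ns}z^{ms}$ in (2), and the ordinary binomial expansions of $(x+y)^n$ and $(y+z)^n$ in (5) immediately. Moreover, $(xyz)^s=y^s(xz)^s$ and $(x^ny^mz^t)^s=y^{ms}(x^nz^t)^s$.

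For the rest of (1), I first prove by induction on $m$ that
\[
zx^m=\beta^mx^mz+b[m]_\beta x^{m-1},
\]
exactly as in the proof of Proposition \ref{case(2)(ii)}(1). I then write $z^{n+1}x^m=z\sum_k x^{m-k}W^{(m)}_{n,k}z^{n-k}$ and apply this formula to $zx^{m-k}$. Comparing coefficients of $x^{m-k}z^{n+1-k}$ via PBW uniqueness yields \eqref{Wrecconstbcomm2iv}. The closed form \eqref{znxmclosedconstbcomm2iv} follows by checking that $\Theta_{n,m}(k)$ satisfies the same recursion and initial data. Concretely, one must verify
\[
\beta^{(m-k)(n+1-k)}\begin{bmatrix} n+1\\ k\end{bmatrix}_\beta
=\beta^{m-k}\beta^{(m-k)(n-k)}\begin{bmatrix} n\\ k\end{bmatrix}_\beta
+\beta^{(m-k+1)(n-k+1)}\tfrac{[m-k+1]_\beta[k]_\beta}{[m-k+1]_\beta}\ldots
\]
This reduces, after cancelling the common factor $b^k\begin{bmatrix} m\\ k\end{bmatrix}_\beta[k]_\beta!$, to the $q$-Pascal identity
\[
\begin{bmatrix} n+1\\ k\end{bmatrix}_\beta=\beta^{k}\begin{bmatrix} n\\ k\end{bmatrix}_\beta+\begin{bmatrix} n\\ k-1\end{bmatrix}_\beta
\]
or its mirror form, with the powers of $\beta$ matched carefully. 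This bookkeeping of exponents is the step I expect to be the main obstacle. I will do it by writing $[m-k+1]_\beta\begin{bmatrix} m\\ k-1\end{bmatrix}_\beta[k-1]_\beta!=\begin{bmatrix} m\\ k\end{bmatrix}_\beta[k]_\beta!$ first, so that only the $n$-binomials remain, and then choosing whichever Pascal form makes the exponents agree. The identity \eqref{zrxnclosedqWeyl2iv} in (4) is the same statement with letters renamed.

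For (2), I assume \eqref{xnzmUconstbcomm2iv} for $s$ and multiply on the right by $x^nz^m$. Each $z^{ms-(\ell-k)}x^n$ is expanded by \eqref{Wdefconstbcomm2iv}. Since all coefficients are scalars, no shifts occur, and collecting $x^{n(s+1)-\ell}z^{m(s+1)-\ell}$ gives \eqref{Urecconstbcomm2iv}. Part (3) is the case $n=m=1$ after extracting $y^s$. Here $z^{s-\ell}x=\beta^{s-\ell}xz^{s-\ell}+b[s-\ell]_\beta z^{s-\ell-1}$, and reindexing the second term into loss index $\ell+1$ gives \eqref{Vrecconstbcomm2iv}.

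For (4), after extracting $y^{ms}$ I unfold $(x^nz^t)^s$ step by step. At step $j$ (for $j=1,\dots,s-1$) the accumulated word is $x^{\ast}z^{tj-(k_1+\cdots+k_{j-1})}$ times $x^nz^t$. Applying \eqref{zrxnclosedqWeyl2iv} with $r=tj-(k_1+\cdots+k_{j-1})$ chooses a loss $k_j$ with weight $\Theta_{r,n}(k_j)$. A straightforward induction on $s$ then shows that the total coefficient of loss $\ell$ is the multisum \eqref{Cmultisumexplicit2iv}; terms with $k_j>\min\{r,n\}$ vanish by the binomial convention. Finally, for $(x+z)^n$ in (5), I expand $(x+z)^{n+1}=(x+z)^nx+(x+z)^nz$, normal order $z^kx=\beta^kxz^k+b[k]_\beta z^{k-1}$, and collect coefficients of $x^iz^k$. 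The index shift $k+1\to k$ in the $b$-term produces \eqref{Erecconstbcomm2iv}.
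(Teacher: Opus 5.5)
Your proposal is correct and follows essentially the paper's route: centrality of $y$, the rule $z^r x=\beta^r x z^r+b[r]_\beta z^{r-1}$, and collecting PBW coefficients after multiplying by one more factor. Two remarks. For the closed form, after factoring out $b^k\begin{bmatrix} m\\ k\end{bmatrix}_\beta[k]_\beta!\,\beta^{(m-k)(n-k+1)}$, the identity you need is precisely the mirror Pascal form $\begin{bmatrix} n+1\\ k\end{bmatrix}_\beta=\begin{bmatrix} n\\ k\end{bmatrix}_\beta+\beta^{n-k+1}\begin{bmatrix} n\\ k-1\end{bmatrix}_\beta$. In (3), your direct derivation via $z^{s-\ell}x$ is sounder than the paper's, which justifies the $V$-recursion by identifying $V_{s,\ell}$ with $W^{(s)}_{s,\ell}$; that identification already fails for $s=2$, since $(xz)^2=\beta x^2z^2+bxz$ whereas $z^2x^2$ has leading coefficient $\beta^4$.
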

\begin{proof}
\begin{enumerate}
\item [\rm (1)] The first two identities are immediate. For $zx^m$, induction on $m$ gives
\[
zx^m=\beta^m x^m z+b[m]_\beta x^{m-1},
\]
so iterating this step yields the recursion \eqref{Wrecconstbcomm2iv}. Solving it (or proving by double induction) gives the closed form \eqref{znxmclosedconstbcomm2iv}.

\item [\rm (2)] Since $y$ is central, $(x^n y^m)^s=x^{ns}y^{ms}$ and $(y^n z^m)^s=y^{ns}z^{ms}$. For $(x^n z^m)^s$, PBW gives \eqref{xnzmUconstbcomm2iv}. Multiplying by $x^n z^m$ on the right and normal ordering each subword $z^{ms-(\ell-k)}x^n$ via \eqref{Wdefconstbcomm2iv}, then collecting coefficients, gives \eqref{Urecconstbcomm2iv}. 

\item [\rm (3)] Since $y$ is central, $(xyz)^s=y^s(xz)^s$. Expanding $(xz)^s$ in PBW form produces \eqref{Vdefconstbcomm2iv}. The recursion \eqref{Vrecconstbcomm2iv} is exactly the normal ordering recursion for $z^s x^s$ and the identification $V_{s,\ell}=W^{(s)}_{s,\ell}$
follows by comparing \eqref{Vdefconstbcomm2iv} with \eqref{Wdefconstbcomm2iv}.

\item [\rm (4)] The identity \eqref{zrxnclosedqWeyl2iv} gives a closed normal form for $z^{r}x^{n}$ in the $\beta$-Weyl algebra with constant term, defined by the relation $zx=\beta xz+b$. It can be verified by a double induction, using the elementary commutation rule
\[
zx^{u}=\beta^{u}x^{u}z+b[u]_{\beta}x^{u-1}
\]
together with standard $\beta$-binomial identities.

Now set $B:=x^{n}z^{t}$. The case $s=1$ is immediate. Assuming that
\[
B^{s}=\sum_{\ell} x^{ns-\ell}\mathcal C^{(n,t)}_{s,\ell}z^{ts-\ell},
\]
we multiply on the right by $B$ and normal order the intermediate factor $z^{ts-\ell}x^{n}$ via \eqref{zrxnclosedqWeyl2iv} (with $r=ts-\ell$). Grouping terms according to the total loss $L=\ell+k$ yields the triangular convolution
\[
\mathcal C^{(n,t)}_{s+1,L}=\sum_{k\ge 0}\mathcal C^{(n,t)}_{s,L-k}\Theta_{ts-(L-k),n}(k).
\]
Iterating this recurrence produces exactly the closed multiple-sum formula
\eqref{Cmultisumexplicit2iv}. Finally, since $y$ is central, we insert the factor $y^{ms}$ in the middle to obtain \eqref{xyzorder2iv}.

\item [\rm (5)] The first two identities are classical. For $(x+z)^n$, expand as in \eqref{Edefconstbcomm2iv} and use the relation 
\[
z^k x = \beta^k x z^k+b[k]_{\beta}z^{k-1},
\]
to reduce $(x+z)^{n+1}=(x+z)^n x+(x+z)^n z$ which yields \eqref{Erecconstbcomm2iv}.
\end{enumerate}
\end{proof}

\subsubsection{Algebras of type (2)(v)}

\begin{proposition}\label{case(2)(v)}
Let $A$ be the $\Bbbk$-algebra generated by $x,y,z$ with relations
\[
yz - zy = az, \quad zx -\beta xz = 0 \quad {\rm and} \quad xy - yx = x,
\]
where $a\in\Bbbk$ and $\beta\in\Bbbk^{\times}$. Then the following hold:
\begin{enumerate}
\item[\rm (1)] For all $m,n\in \mathbb{N}$,
\[
y^n x^m=x^m (y-m)^n,\quad z^n x^m=\beta^{mn}x^m z^n,\quad z^n y^m=(y-an)^m z^n.
\]

\item[\rm (2)] For all $m,n,s\in \mathbb{N}$,
\[
(x^n y^m)^s=x^{ns}\prod_{j=0}^{s-1}(y-jn)^m,
\quad
(x^n z^m)^s=\beta^{nm\binom{s}{2}} x^{ns}z^{ms}.
\]
Moreover, $(y^n z^m)^s$ has the explicit form
\begin{equation}\label{ynzmpowercase2v}
(y^n z^m)^s=\left(\prod_{j=0}^{s-1}(y-ajm)^n\right)z^{ms}.
\end{equation}

\item[\rm (3)] For all $s\in \mathbb{N}$,
\begin{equation}\label{xyz_power_case2v}
(xyz)^s =\beta^{\binom{s}{2}}x^{s}\left(\prod_{j=0}^{s-1}(y-aj)\right)z^{s}.
\end{equation}

\item[\rm (4)] For all $n,m,t,s\in \mathbb{N}$,
\begin{equation}\label{blockpowercase2v}
(x^n y^m z^t)^s =\beta^{nt\binom{s}{2}}x^{ns}\left(\prod_{j=0}^{s-1}(y-ajt)^m\right)z^{ts}.
\end{equation}

\item[\rm (5)] 
There exist uniquely determined polynomials $S_{n,k}(y)\in\Bbbk[y]$ such that
\[
(x+y)^n=\sum_{k=0}^{n}x^{k}S_{n,k}(y),
\]
characterized by the Stirling-type recursion
\[
S_{0,0}(y)=1, \quad S_{0,k}(y)=0\ (k\ge 1), \quad S_{n+1,k}(y)=S_{n,k-1}(y)+(y-k) S_{n,k}(y).
\]
Moreover, since $zx=\beta xz$, the $q$-binomial theorem gives
\[
(x+z)^n=\sum_{k=0}^{n}\begin{bmatrix} n\\ k\end{bmatrix}_{\beta} x^{n-k}z^{k}.
\]
Finally, $(y+z)^n$ satisfies the $a$-shift binomial formula
\[
(y+z)^n=\sum_{k=0}^{n}T_{n,k}(y) z^{k},
\]
where the coefficients are characterized by
\[
T_{0,0}(y)=1,\ T_{0,k}(y)=0\ (k\ge 1),\quad T_{n+1,k}(y)=yT_{n,k}(y)+T_{n,k-1}(y-a).
\]
\end{enumerate}
\end{proposition}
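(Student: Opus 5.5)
The proof rests on three one-letter commutation rules and then runs inductions on exponents. From $xy-yx=x$ we get $yx=x(y-1)$, so $f(y)x^m=x^mf(y-m)$ for every $f\in\Bbbk[y]$. From $yz-zy=az$ we get $zy=(y-a)z$, so $z^nf(y)=f(y-an)z^n$. Finally $zx=\beta xz$ gives $zx^m=\beta^m x^m z$, and hence $z^nx^m=\beta^{mn}x^mz^n$ by induction on $n$. Taking $f(y)=y^n$ and $f(y)=y^m$ yields the three identities of (1) at once. The substance of the argument is the bookkeeping of shifts and scalars.

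For (2), the formula for $(x^ny^m)^s$ follows exactly as in Proposition \ref{case(2)(i)}(2), using $y^mx^n=x^n(y-n)^m$. For $(x^nz^m)^s$ I would invoke the general lemma in the proof of Proposition \ref{Case(1)}(2): if $vu=quv$ then $(uv)^s=q^{\binom{s}{2}}u^sv^s$. Here $u=x^n$, $v=z^m$ and $q=\beta^{mn}$. For $(y^nz^m)^s$ I would induct on $s$, writing $(y^nz^m)^{s+1}=\bigl(\prod_{j<s}(y-ajm)^n\bigr)z^{ms}y^nz^m$ and moving $z^{ms}$ past $y^n$ to produce the factor $(y-asm)^n$.

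Parts (3) and (4) are handled together, since (3) is the case $n=m=t=1$ of (4). Put $B=x^ny^mz^t$ and assume
\[
B^s=\beta^{nt\binom{s}{2}}x^{ns}P_s(y)z^{ts}, \qquad P_s(y)=\prod_{j=0}^{s-1}(y-ajt)^m.
\]
Then
\[
B^{s+1}=\beta^{nt\binom{s}{2}}x^{ns}P_s(y)\,z^{ts}x^n\,y^mz^t .
\]
We first replace $z^{ts}x^n$ by $\beta^{nts}x^nz^{ts}$. Next we move $P_s(y)$ right past $x^n$, which turns it into $P_s(y-n)$. Then we move $z^{ts}$ past $y^m$, which gives $(y-ats)^mz^{ts}$.

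The main obstacle appears here: the $x$-shift yields $P_s(y-n)$, not $P_s(y)$, so the formula as stated does not come out directly. I would check this first on a small case. For $s=2$ and $n=m=t=1$ we get $xyzxyz=\beta x^2(y-1)(y-a)z^2$, while the statement asserts $\beta x^2y(y-a)z^2$. So the correct factor should be $\prod_{j}(y-jn-ajt)^m$ (for (3), $\prod_j(y-j-aj)$), and I would prove that corrected form by the same induction. Note also that $\beta^{nt\binom{s}{2}}\beta^{nts}=\beta^{nt\binom{s+1}{2}}$, so the scalar part of the induction is fine. If the intended reading instead normal-orders in some other way, the inductive computation will show which version holds.

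For (5), the recursions for $S_{n,k}$ follow verbatim from Proposition \ref{case(2)(i)}(5), by left multiplication and $yx^k=x^k(y-k)$. The expansion of $(x+z)^n$ is the $q$-binomial lemma from Proposition \ref{Case(1)}(5) with $q=\beta$. For $(y+z)^n$, left-multiply $\sum_kT_{n,k}(y)z^k$ by $y+z$ and use $zf(y)=f(y-a)z$; comparing coefficients of $z^k$ via PBW uniqueness gives $T_{n+1,k}(y)=yT_{n,k}(y)+T_{n,k-1}(y-a)$.
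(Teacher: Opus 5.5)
Your arguments for (1), (2) and (5) are correct and are essentially the paper's. Your $s=2$ check for (3)--(4) is also right: $(xyz)^2=\beta x^2(y-1)(y-a)z^2$. So the stated formulas fail whenever $n,m\ge 1$ and $s\ge 2$. Because PBW normal forms are unique, no other reading of the normal ordering rescues them. The paper's inductive step makes exactly the slip you flagged: it carries $\prod_j(y-ajt)^m$ past $x^n$ unchanged, as if $x$ and $y$ commuted.

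The genuine gap is your proposed correction. The product $\prod_j(y-jn-ajt)^m$ is wrong, and it contradicts your own computation. For $n=m=t=1$, $s=2$ it gives $y(y-1-a)$, not $(y-1)(y-a)$, so the induction you plan will not close on it. What your inductive step actually produces is
\[
P_{s+1}(y)=P_s(y-n)\,(y-ats)^m, \qquad P_1(y)=y^m.
\]
The factor created at step $j$ carries the shift $-ajt$. It is then shifted by $-n$ once for each of the $s-1-j$ later blocks, so the two shifts run in opposite directions along the product:
\[
(x^ny^mz^t)^s=\beta^{nt\binom{s}{2}}x^{ns}\Big(\prod_{j=0}^{s-1}\big(y-(s-1-j)n-ajt\big)^m\Big)z^{ts}.
\]
In particular, $(xyz)^s=\beta^{\binom{s}{2}}x^s\Big(\prod_{j=0}^{s-1}\big(y-(s-1-j)-aj\big)\Big)z^s$; for $s=3$ this gives $\beta^3x^3(y-2)(y-1-a)(y-2a)z^3$, which a direct computation confirms. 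This product does satisfy the recursion. Indeed, $P_s(y-n)=\prod_{j=0}^{s-1}\big(y-(s-j)n-ajt\big)^m$, and the remaining $j=s$ factor of $P_{s+1}$ is exactly $(y-ats)^m$.
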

\begin{proof}
\begin{enumerate}
\item [\rm (1)] From $yx=xy-x$ we obtain $yx=x(y-1)$, hence $f(y)x^m=x^m f(y-m)$ for all $f\in\Bbbk[y]$,
and thus $y^n x^m=x^m(y-m)^n$.
From $zx=\beta xz$ we immediately get $z^n x^m=\beta^{mn}x^m z^n$.

Next, $yz-zy=az$ is equivalent to $zy=(y-a)z$. Thus for $t\in \mathbb{N}$,
\[
z y^t=(y-a)^t z,
\]
and iterating gives $z^n y^t=(y-an)^t z^n$, i.e.\ $z^n y^m=(y-an)^m z^n$.

\item [\rm (2)] The identity for $(x^n y^m)^s$ is proved as in the Weyl-type case: from $y^m x^n=x^n (y-n)^m$ one deduces
\[
(x^n y^m)^{s+1}=x^{n(s+1)}\prod_{j=0}^{s}(y-jn)^m.
\]
For $(x^n z^m)^s$, since $z^m x^n=\beta^{mn}x^n z^m$, the standard $q$-commuting lemma yields $(x^n z^m)^s=\beta^{mn\binom{s}{2}}x^{ns}z^{ms}$.
For $(y^n z^m)^s$, note that $z^m y^n=(y-am)^n z^m$, hence by induction
\[
(y^n z^m)^{s+1}=(y^n z^m)^s y^n z^m
=\Big(\prod_{j=0}^{s-1}(y-ajm)^n\Big)z^{ms} y^n z^m
=\Big(\prod_{j=0}^{s}(y-ajm)^n\Big)z^{m(s+1)}.
\]

\item [\rm (3)] Write $(xyz)^{s+1}=(xyz)^s xyz$. Using $zx=\beta xz$ we get a factor $\beta^{s}$ at the $(s+1)$-st step, so the total contribution is $\beta^{\binom{s}{2}}$. Moreover, the $z$ coming from the rightmost factor meets $y$-powers created
previously. Since $zy = (y-a)z$, at the $j$-th step we pick up the factor $(y-aj)$.

\item [\rm (4)] Set $B:=x^n y^m z^t$. Since $zx=\beta xz$ we have $z^t x^n=\beta^{nt}x^n z^t$.
Also, from $zy=(y-a)z$ we get $z^t y^m=(y-at)^m z^t$. Hence
\begin{align*}
B^{s+1} &\ =B^s x^n y^m z^t
=\beta^{nt s}x^{ns}\left(\prod_{j=0}^{s-1}(y-aj t)^m\right)z^{ts}x^n y^m z^t \\
&\ =\beta^{nt(s+1)s/2} x^{n(s+1)}\prod_{j=0}^{s}(y-aj t)^m z^{t(s+1)},
\end{align*}
which is the expression \eqref{blockpowercase2v}.

\item [\rm (5)] The expansion of $(x+y)^n$ is the same as in the Weyl relation $xy-yx=x$, giving the Stirling-type recursion for $S_{n,k}(y)$. Since $zx=\beta xz$, the $q$-binomial theorem yields the Gaussian expansion of $(x+z)^n$ with parameter $q=\beta$.
Finally, from $zy=(y-a)z$ one checks that for $f\in\Bbbk[y]$, $zf(y)=f(y-a)z$, so multiplying
$$
(y+z)^n = \sum_k T_{n,k}(y)z^k
$$ 
by $(y+z)$ on the left yields $T_{n+1,k}(y) = yT_{n,k}(y)+T_{n,k-1}(y-a)$.
\end{enumerate}
\end{proof}

\subsubsection{Algebras of type (2)(vi)}

\begin{proposition}\label{case(2)(vi)}
Let $A$ be the $\Bbbk$-algebra generated by $x,y,z$ with relations
\[
yz - zy = z, \quad zx - \beta xz = 0 \quad {\rm and} \quad xy - yx = 0,
\]
where $\beta\in\Bbbk^{\times}$. Then the following hold.
\begin{enumerate}
\item[\rm (1)] For all $m, n\in \mathbb{N}$,
\[
y^n x^m = x^m y^n,\quad z^n x^m = \beta^{mn}x^m z^n \quad {\rm and} \quad z^n y^m = (y-n)^m z^n.
\]

\item[\rm (2)] For all $m, n, s\in \mathbb{N}$,
\begin{align*}
(x^n y^m)^s = &\, x^{ns}y^{ms}, \\    
(x^n z^m)^s = &\, \beta^{nm\binom{s}{2}} x^{ns}z^{ms} \quad {\rm and} \\
\left(y^n z^m\right)^s = &\, \left(\prod_{j=0}^{s-1}(y-jm)^n\right)z^{ms}.
\end{align*}

\item[\rm (3)] For all $s\in \mathbb{N}$,
\begin{equation}\label{xyzpowercase2vi}
(xyz)^s=\beta^{\binom{s}{2}}\sum_{k=0}^{s}\binom{s}{k}(-1)^kx^{s}y^{s-k}z^{s}.
\end{equation}

\item[\rm (4)] For all $n,m,t,s\in \mathbb{N}$,
\begin{equation}\label{blockpowercase2vi}
(x^n y^m z^t)^s = \beta^{nt\binom{s}{2}} 
x^{ns}\left(\prod_{j=0}^{s-1}(y-jt)^m\right)z^{ts}.
\end{equation}

\item[\rm (5)] Since $x$ and $y$ commute, we have the ordinary binomial identity
\[
(x+y)^n=\sum_{k=0}^{n}\binom{n}{k}x^{n-k}y^k.
\]
Since $zx = \beta xz$, the $q$-binomial theorem yields
\[
(x+z)^n=\sum_{k=0}^{n}\begin{bmatrix} n\\ k\end{bmatrix}_{\beta}x^{n-k}z^{k}.
\]
Finally, $(y+z)^n$ has a unique PBW expansion given by
\[
(y+z)^n=\sum_{k=0}^{n}T_{n,k}(y)z^{k},
\]
where $T_{n,k}(y)$ are characterized by the explicit recursion
\[
T_{0,0}(y)=1,\ T_{0,k}(y)=0\ (k\ge 1),\quad T_{n+1,k}(y)=yT_{n,k}(y)+T_{n,k-1}(y-1).
\]
\end{enumerate}
\end{proposition}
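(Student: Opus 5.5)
The plan is to follow the template of Propositions \ref{case(2)(v)} and \ref{Case(1)}. First I derive the three elementary commutation rules, then obtain every item of the statement by induction on $s$ or on $n$. Part (1) is immediate. The relation $xy=yx$ gives $y^nx^m=x^my^n$, and $zx=\beta xz$ gives $zx^m=\beta^mx^mz$ and hence $z^nx^m=\beta^{mn}x^mz^n$. The relation $yz-zy=z$ is equivalent to $zy=(y-1)z$, so $zf(y)=f(y-1)z$ for every $f\in\Bbbk[y]$. Iterating this gives $z^nf(y)=f(y-n)z^n$, and taking $f(y)=y^m$ yields $z^ny^m=(y-n)^mz^n$.

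For part (2), $(x^ny^m)^s=x^{ns}y^{ms}$ holds because $x$ and $y$ commute. For $(x^nz^m)^s$ I apply the $q$-commuting lemma from the proof of Proposition \ref{Case(1)}(2) with $u=x^n$, $v=z^m$ and $q=\beta^{mn}$. For $(y^nz^m)^s$ I argue by induction, using $(y^nz^m)^{s}y^nz^m$ and pushing $z^{ms}$ past $y^n$ via $z^{ms}y^n=(y-ms)^nz^{ms}$, exactly as in Proposition \ref{case(2)(v)}(2) with $a=1$.

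Part (4) is the main identity, and I would prove it before part (3). Set $B=x^ny^mz^t$ and assume the formula for $B^s$. In $B^sB$ the only reorderings needed are moving $z^{ts}$ past $x^n$, which contributes $\beta^{nts}$, and moving $z^{ts}$ past $y^m$, which contributes $(y-ts)^m$. The polynomial $\prod_{j<s}(y-jt)^m$ passes freely across $x^n$ since $x$ and $y$ commute. Collecting, the factor $\beta^{nt\binom{s}{2}+nts}$ equals $\beta^{nt\binom{s+1}{2}}$, and the new factor $(y-ts)^m$ extends the product to $j=s$.

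Part (3) is then the specialization $n=m=t=1$ of part (4), which gives
\[
(xyz)^s=\beta^{\binom{s}{2}}x^s\Big(\prod_{j=0}^{s-1}(y-j)\Big)z^s .
\]
This is the step I expect to be the real obstacle. The displayed formula \eqref{xyzpowercase2vi} has middle factor $\sum_k\binom{s}{k}(-1)^ky^{s-k}=(y-1)^s$, but the product above is a falling factorial in $y$. These disagree already at $s=2$: a direct computation gives $(xyz)^2=\beta x^2y(y-1)z^2$, not $\beta x^2(y-1)^2z^2$. So I would prove the falling-factorial form. Expanding it in the monomial basis gives coefficients that are signed Stirling numbers of the first kind, $\prod_{j=0}^{s-1}(y-j)=\sum_k s(s,k)y^k$, and these are not binomial coefficients. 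I would therefore flag \eqref{xyzpowercase2vi} as needing correction to that form.

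For part (5), the expansion of $(x+y)^n$ is the ordinary binomial theorem. The expansion of $(x+z)^n$ follows from the Gaussian binomial lemma in Proposition \ref{Case(1)}(5) with $q=\beta$. For $(y+z)^n=\sum_kT_{n,k}(y)z^k$, I multiply on the left by $y+z$ and use $zT_{n,k}(y)=T_{n,k}(y-1)z$. Comparing coefficients of $z^k$, which is legitimate by PBW uniqueness, gives $T_{n+1,k}(y)=yT_{n,k}(y)+T_{n,k-1}(y-1)$, exactly as in Proposition \ref{case(2)(i)}(5).
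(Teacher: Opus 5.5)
Your proposal is correct and follows the paper's proof essentially step for step: the shift rules $zf(y)=f(y-1)z$ and $z^{r}x^{n}=\beta^{rn}x^{n}z^{r}$, the $q$-commuting and Gaussian-binomial lemmas, and induction on $s$ or $n$. Obtaining (3) as the case $n=m=t=1$ of (4), rather than by a separate induction, makes no difference.

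You are also right that \eqref{xyzpowercase2vi} is misstated. The paper's own inductive step for (3) produces the new factor $(y-s)$, so it yields the falling factorial $\beta^{\binom{s}{2}}x^{s}\prod_{j=0}^{s-1}(y-j)\,z^{s}$, which is exactly what Table~\ref{alg(2)2} records. The displayed sum instead equals $\beta^{\binom{s}{2}}x^{s}(y-1)^{s}z^{s}$, and it fails already at $s=1$, where it would give $xyz-xz$.
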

\begin{proof}
\begin{enumerate}
\item [\rm (1)] The identity $y^n x^m=x^m y^n$ is immediate from $xy=yx$. From $zx=\beta xz$ we get $z^n x^m=\beta^{mn}x^m z^n$. Finally, $yz-zy=z$ is equivalent to $zy=(y-1)z$, and so $z^n y^m=(y-n)^m z^n$.

\item [\rm (2)] The first identity is immediate from commutativity of $x$ and $y$. For $(x^n z^m)^s$, since $z^m x^n=\beta^{mn}x^n z^m$, the standard $q$-commuting lemma guarantee that $(x^n z^m)^s=\beta^{mn\binom{s}{2}}x^{ns}z^{ms}$.
For $(y^n z^m)^s$, note that $z^m y^n=(y-m)^n z^m$, and so
\begin{align*}
 (y^n z^m)^{s+1} = &\, (y^n z^m)^s y^n z^m = \left(\prod_{j=0}^{s-1}(y-jm)^n\right)z^{ms}y^n z^m \\
 = &\, \left(\prod_{j=0}^{s}(y-jm)^n\right)z^{m(s+1)}.
\end{align*}

\item [\rm (3)] Proceed by induction on $s$:
when multiplying $(xyz)^s$ by $xyz$ on the right, commuting the new $z$ past the accumulated $x^s$ produces the factor $\beta^{s}$ and commuting $z$ past the accumulated $y$-polynomial uses $zy=(y-1)z$, producing the next factor $(y-s)$. This yields \eqref{xyzpowercase2vi}.

\item [\rm (4)] Let $B:=x^n y^m z^t$. Since $zx=\beta xz$, we have $z^t x^n=\beta^{nt}x^n z^t$ and due to $zy=(y-1)z$, we get $z^t y^m=(y-t)^m z^t$. In this way, 
\begin{align*}
B^{s+1} &\ =B^s x^n y^m z^t
=\beta^{nt s}x^{ns}\left(\prod_{j=0}^{s-1}(y-jt)^m\right)z^{ts} x^n y^m z^t \\
&\ =\beta^{nt\binom{s+1}{2}}x^{n(s+1)}\prod_{j=0}^{s}(y-jt)^m z^{t(s+1)},
\end{align*}

which is precisely expression \eqref{blockpowercase2vi}.

\item [\rm (5)] The first identity is classical. The second is the $q$-binomial theorem with parameter $q=\beta$
since $zx=\beta xz$. For the last statement, write
$$
(y+z)^n = \sum_k T_{n,k}(y)z^k
$$ 
and multiply on the left by $(y+z)$. The relation $z f(y)=f(y-1)z$ yields that 
$$
T_{n+1,k}(y)=yT_{n,k}(y)+T_{n,k-1}(y-1).
$$
\end{enumerate}
\end{proof}

\begin{table}[H]
\captionsetup{justification=centering,font=small}
\caption{Normal ordering of algebras of type (2)}
\label{alg(2)}
\centering
\resizebox{12.5cm}{!}{
\setlength\extrarowheight{6pt}
\begin{tabular}{ |c|c|c|c|c| } 
\hline
 & $y^nx^m$ & $z^nx^m$ & $z^ny^m$ & Conditions/Recursion \\
 \hline\hline
\multirow{3}{*}{(i)} & \multirow{3}{*}{$x^m (y-m)^n$} & \multirow{3}{*}{$\displaystyle\sum_{k=0}^{\min\{m,n\}} x^{m-k}W^{(m)}_{n,k}(y)z^{n-k}$} & \multirow{3}{*}{$(y-n)^m z^n$} & $W^{(m)}_{0,0}(y)=1$, $W^{(m)}_{0,k}(y)=0$ for $k\ge 1$, \\
& & & & $W^{(m)}_{n+1,k}(y)
=\beta^{m-k}W^{(m)}_{n,k}(y-1)+Q_{m-k+1}(y)W^{(m)}_{n,k-1}(y)$, \\
& & & & where $W^{(m)}_{n,-1}(y):=0$ and $Q_r(y):=[r]_\beta y-\sum_{i=1}^{r-1}[i]_\beta$\\ 
\hline
(ii) & $x^m (y-m)^n$ & $\displaystyle\sum_{k=0}^{\min\{m,n\}}
\beta^{(m-k)(n-k)}b^k
\begin{bmatrix} n\\ k\end{bmatrix}_{\beta}
\begin{bmatrix} m\\ k\end{bmatrix}_{\beta}
[k]_{\beta}!
x^{m-k}z^{n-k}$ & $(y-n)^m z^n$ & None \\ 
\hline
(iii) & $x^m y^n$ & $\displaystyle\sum_{k=0}^{\min\{m,n\}}
\beta^{(m-k)(n-k)}
\begin{bmatrix} n\\ k\end{bmatrix}_{\beta}
\begin{bmatrix} m\\ k\end{bmatrix}_{\beta}
[k]_{\beta}!
x^{m-k}y^{k}z^{n-k}$ & $y^m z^n$ & None \\ 
\hline
(iv) & $x^m y^n$ & $\displaystyle\sum_{k=0}^{\min\{m,n\}}\beta^{(m-k)(n-k)}b^k
\begin{bmatrix} n\\ k\end{bmatrix}_{\beta}
\begin{bmatrix} m\\ k\end{bmatrix}_{\beta}
[k]_{\beta}!x^{m-k}z^{n-k}$ & $y^m z^n$ & None \\ 
\hline
(v) & $x^m (y-m)^n$  & $\beta^{mn}x^m z^n$ & $(y-an)^m z^n$ & None \\ 
\hline
(vi) & $x^m y^n$ & $\beta^{mn}x^m z^n$ & $(y-n)^m z^n$ & None \\ 
\hline
\end{tabular}
}
\end{table}

\begin{table}[H]
\captionsetup{justification=centering,font=small}
\caption{Power of blocks of algebras of type (2)}
\label{alg(2)1}
\centering
\resizebox{12.5cm}{!}{
\setlength\extrarowheight{6pt}
\begin{tabular}{ |c|c|c|c|c| } 
\hline
 & $(x^ny^m)^s$ & $(x^nz^m)^s$ & $(y^nz^m)^s$ & Conditions/Recursion \\
 \hline\hline
\multirow{2}{*}{(i)} & \multirow{2}{*}{$x^{ns}\prod_{j=0}^{s-1}(y-jn)^m$} & \multirow{2}{*}{$\displaystyle\sum_{\ell=0}^{\min\{ns,ms\}}x^{ns-\ell}U^{(n,m)}_{s,\ell}(y)z^{ms-\ell}$} & \multirow{2}{*}{$\Big(\prod_{j=0}^{s-1}(y-jm)^n\Big)z^{ms}$} & $U^{(n,m)}_{1,0}(y)=1,\quad U^{(n,m)}_{1,\ell}(y)=0$ for  $\ell\ge 1$, \\
& & & & $U^{(n,m)}_{s+1,\ell}(y)
=\displaystyle\sum_{k=0}^{\min\{n,\ell\}}
U^{(n,m)}_{s,\ell-k}\big(y-(n-k)\big)
W^{(n)}_{ms-(\ell-k),k}(y)$, \\
\hline
\multirow{4}{*}{(ii)} & \multirow{4}{*}{$x^{ns}\prod_{j=0}^{s-1}(y-jn)^m$} & \multirow{4}{*}{$\displaystyle\sum_{\ell=0}^{\min\{ns,ms\}} x^{ns-\ell}U^{(n,m)}_{s,\ell}z^{ms-\ell}$} & \multirow{4}{*}{$\Big(\prod_{j=0}^{s-1}(y-jm)^n\Big)z^{ms}$} & $U^{(n,m)}_{1,0}=1$, $U^{(n,m)}_{1,\ell}=0$ for $\ell\ge 1$ \\ 
& & & & $U^{(n,m)}_{s+1,\ell}
=\displaystyle\sum_{k=0}^{\min\{n,\ell\}}
U^{(n,m)}_{s,\ell-k}
W^{(n)}_{ms-(\ell-k),k}$, for $s\ge 1,\ \ell\ge 0,$ \\
& & & &  $W^{(m)}_{0,0}=1$, $W^{(m)}_{0,k}=0$ for $k\ge 1$, \\
& & & & $W^{(m)}_{n+1,k}=\beta^{m-k}W^{(m)}_{n,k}+b[m-k+1]_\beta W^{(m)}_{n,k-1}$, for $n\ge 0,\ k\ge 0$. \\
\hline
\multirow{4}{*}{(iii)} & \multirow{4}{*}{$x^{ns}y^{ms}$} & \multirow{4}{*}{$\displaystyle\sum_{\ell=0}^{\min\{ns,ms\}} x^{ns-\ell}U^{(n,m)}_{s,\ell}(y)z^{ms-\ell}$} & \multirow{4}{*}{$y^{ns}z^{ms}$} & $U^{(n,m)}_{1,0}(y)=1$, $U^{(n,m)}_{1,\ell}(y)=0$ for $\ell\ge 1$, \\ 
& & & & $U^{(n,m)}_{s+1,\ell}(y) =\displaystyle\sum_{k=0}^{\min\{n,\ell\}}U^{(n,m)}_{s,\ell-k}(y)W^{(n)}_{ms-(\ell-k),k}(y)$, for $s\ge 1,\ \ell\ge 0$,\\
& & & & $W^{(m)}_{0,0}(y)=1$, $W^{(m)}_{0,k}(y)=0$ for $k\ge 1$, \\
& & & & $W^{(m)}_{n+1,k}(y) =\beta^{m-k}W^{(m)}_{n,k}(y)+y[m-k+1]_\beta W^{(m)}_{n,k-1}(y)$, for $n\ge 0,\ k\ge 0$.\\
\hline
(iv) & $x^{ns}y^{ms}$ & $\displaystyle\sum_{\ell=0}^{\min\{ns,ms\}} x^{ns-\ell}U^{(n,m)}_{s,\ell}z^{ms-\ell}$ & $y^{ns}z^{ms}$ & Same (ii) \\ 
\hline
(v) & $x^{ns}\displaystyle\prod_{j=0}^{s-1}(y-jn)^m$ & $\beta^{nm\binom{s}{2}}x^{ns}z^{ms}$& $\Big(\displaystyle\prod_{j=0}^{s-1}(y-ajm)^n\Big)z^{ms}$ & None \\ 
\hline
(vi) & $x^{ns}y^{ms}$ & $\beta^{nm\binom{s}{2}}x^{ns}z^{ms}$ & $\Big(\displaystyle\prod_{j=0}^{s-1}(y-jm)^n\Big)z^{ms}$ & None \\ 
\hline
\end{tabular}
}
\end{table}

\begin{table}[H]
\captionsetup{justification=centering,font=small}
\caption{Power of three generators of algebras of type (2)}
\label{alg(2)2}
\centering
\resizebox{12.5cm}{!}{
\setlength\extrarowheight{6pt}
\begin{tabular}{ |c|c|c|c| } 
\hline
 & $(xyz)^s$ & $(x^ny^mz^t)^s$ & Conditions/Recursion \\
 \hline\hline
\multirow{4}{*}{(i)} & \multirow{4}{*}{$\displaystyle\sum_{\ell=0}^{s} x^{s-\ell}V_{s,\ell}(y)z^{s-\ell}$} & \multirow{4}{*}{$\displaystyle\sum_{\ell=0}^{\min\{ns,ts\}}x^{ns-\ell}R^{(n,m,t)}_{s,\ell}(y)y^{ms-\ell}z^{ts-\ell}$}  & $V_{1,0}(y)=y,\quad V_{1,\ell}(y)=0$ for $\ell\ge 1$, \\
& &  & $V_{s+1,\ell}(y)
=(y-(s-\ell))\Big(\beta^{s-\ell}V_{s,\ell}(y-1)+Q_{s-\ell+1}(y)V_{s,\ell-1}(y)\Big)$\\
& & & $R^{(n,m,t)}_{1,0}(y)=y^m,\quad R^{(n,m,t)}_{1,\ell}(y)=0$ for $\ell\ge 1$ \\
& & & $R^{(n,m,t)}_{s+1,\ell}(y)
=\big(y-(ts-\ell)\big)^{m}
\displaystyle\sum_{k=0}^{\min\{n,\ell\}}
R^{(n,m,t)}_{s,\ell-k}\big(y-(n-k)\big)
W^{(n)}_{ts-(\ell-k),k}(y)$ \\
\hline
\multirow{4}{*}{(ii)} & \multirow{4}{*}{$\displaystyle\sum_{\ell=0}^{s} x^{s-\ell}V_{s,\ell}(y)z^{s-\ell}$} & \multirow{4}{*}{$\displaystyle\sum_{\ell=0}^{\min\{ns,ts\}} x^{ns-\ell}R^{(n,m,t)}_{s,\ell}(y)z^{ts-\ell}$} &  $V_{1,0}(y)=y$, $V_{1,\ell}(y)=0$ for $\ell\ge 1$, \\ 
& & &  $V_{s+1,\ell}(y)
=(y-(s-\ell))\Big(\beta^{s-\ell}V_{s,\ell}(y-1)+b[s-\ell+1]_\beta V_{s,\ell-1}(y)\Big)$, for $s\ge 1,\ \ell\ge 0$, \\
& & & $R^{(n,m,t)}_{1,0}(y)=y^m$, $R^{(n,m,t)}_{1,\ell}(y)=0$ for $\ell\ge 1$ \\
& & & $R^{(n,m,t)}_{s+1,\ell}(y)
=(y-(ts-\ell))^{m}
\displaystyle\sum_{k=0}^{\min\{n,\ell\}}
R^{(n,m,t)}_{s,\ell-k}\big(y-(n-k)\big)
W^{(n)}_{ts-(\ell-k),k}$, for $s\ge 1,\ \ell\ge 0$. \\
\hline
\multirow{4}{*}{(iii)} & \multirow{4}{*}{$\displaystyle\sum_{\ell=0}^{s} x^{s-\ell}V_{s,\ell}(y)z^{s-\ell}$} & \multirow{4}{*}{$\displaystyle\sum_{\ell=0}^{\min\{ns,ts\}} x^{ns-\ell}R^{(n,m,t)}_{s,\ell}(y)z^{ts-\ell}$} & $V_{1,0}(y)=y$, $V_{1,\ell}(y)=0$ for $\ell\ge 1$,  \\ 
& & & $V_{s+1,\ell}(y) =\beta^{s-\ell}yV_{s,\ell}(y)+[s-\ell+1]_\beta y^{2}V_{s,\ell-1}(y)$, for $s\ge 1,\ \ell\ge 0$, \\
& & & $R^{(n,m,t)}_{1,0}(y)=y^{m}$, $R^{(n,m,t)}_{1,\ell}(y)=0$ for $\ell\ge 1$, \\
& & & $R^{(n,m,t)}_{s+1,\ell}(y) = y^{m}\displaystyle\sum_{k=0}^{\min\{n,\ell\}}
R^{(n,m,t)}_{s,\ell-k}(y)W^{(n)}_{ts-(\ell-k),k}(y)$, for $s\ge 1,\ \ell\ge 0$. \\
\hline
\multirow{3}{*}{(iv)} & \multirow{3}{*}{$\displaystyle\sum_{\ell=0}^{s} x^{s-\ell}V_{s,\ell}y^{s}z^{s-\ell}$} & \multirow{3}{*}{$\displaystyle\sum_{\ell=0}^{\min\{ns,ts\}}
x^{ns-\ell}y^{ms}\mathcal C^{(n,t)}_{s,\ell}z^{ts-\ell}$} & $\mathcal C^{(n,t)}_{s,\ell}=\displaystyle\sum_{\substack{k_1,\dots,k_{s-1}\ge 0\\ k_1+\cdots+k_{s-1}=\ell}}
\ \prod_{j=1}^{s-1}\Theta_{tj-(k_1+\cdots+k_{j-1}),n}(k_j)$,  \\ 
& & & $\mathcal C^{(n,t)}_{1,0}=1$ and $\mathcal C^{(n,t)}_{1,\ell}=0$ if $\ell\ge 1$, \\
& & & $\Theta_{r,n}(k)=\beta^{(n-k)(r-k)} b^{k}
\begin{bmatrix}r\\ k\end{bmatrix}_{\beta}\begin{bmatrix}n\\ k\end{bmatrix}_{\beta}[k]_{\beta}!$. \\
\hline
(v) & $\beta^{\binom{s}{2}}x^{s}\Big(\displaystyle\prod_{j=0}^{s-1}(y-aj)\Big)z^{s}$ & $\beta^{nt\binom{s}{2}}x^{ns}\Big(\displaystyle\prod_{j=0}^{s-1}(y-ajt)^m\Big)z^{ts}$ & None  \\ 
\hline
(vi) & $\beta^{\binom{s}{2}}x^{s}\Big(\displaystyle\prod_{j=0}^{s-1}(y-j)\Big)z^{s}$ & $\beta^{nt\binom{s}{2}}x^{ns}\Big(\displaystyle\prod_{j=0}^{s-1}(y-jt)^m\Big)z^{ts}$ &  None \\ 
\hline
\end{tabular}
}
\end{table}

\begin{table}[H]
\captionsetup{justification=centering,font=small}
\caption{Binomial formulas for algebras of type (2)}
\label{alg(2)3}
\centering
\resizebox{12.5cm}{!}{
\setlength\extrarowheight{6pt}
\begin{tabular}{ |c|c|c|c|c| } 
\hline
 & $(x+y)^n$ & $(x+z)^n$ & $(y+z)^n$ & Conditions/Recursion \\
 \hline\hline
\multirow{3}{*}{(i)} & \multirow{3}{*}{$\displaystyle\sum_{k=0}^{n} x^kS_{n,k}(y)$} & \multirow{3}{*}{$\displaystyle\sum_{i,k\ge 0} x^iE_{n;i,k}(y)z^k$} & \multirow{3}{*}{$\displaystyle\sum_{k=0}^{n} T_{n,k}(y)z^k$} &   $E_{n;i,k}(y)=0$ if $i<0$ or $k<0$ or $i+k>n$ \\
& & & & $E_{0;0,0}(y)=1,\quad E_{0;i,k}(y)=0$ for $(i,k)\neq(0,0)$ \\
& & & & $E_{n+1;i,k}(y)=E_{n;i,k-1}(y)+\beta^{k}E_{n;i-1,k}(y-1)+ Q_{k+1}(y)E_{n;i,k+1}(y)$ \\
\hline
\multirow{3}{*}{(ii)} & \multirow{3}{*}{$\displaystyle\sum_{k=0}^{n} x^{k}S_{n,k}(y)$} & \multirow{3}{*}{$\displaystyle\sum_{\substack{i,k\ge 0\\ i+k\le n}} x^{i}E_{n;i,k}z^{k}$} & \multirow{3}{*}{$\displaystyle\sum_{k=0}^{n} T_{n,k}(y)z^{k}$}& $E_{0;0,0}=1$ and $E_{0;i,k}=0$ for $(i,k)\neq(0,0)$  \\ 
& & & & $E_{n+1;i,k}= E_{n;i,k-1} +\beta^{k}E_{n;i-1,k} +b[k+1]_\beta E_{n;i,k+1},$ \\
& & & & where we set $E_{n;i,k}=0$ whenever $i<0$ or $k<0$ or $i+k>n$. \\
\hline
\multirow{3}{*}{(iii)} & \multirow{3}{*}{$\displaystyle\sum_{k=0}^{n}\binom{n}{k}x^{n-k}y^k$} & \multirow{3}{*}{$\displaystyle\sum_{\substack{i,k\ge 0\\ i+k\le n}} x^{i}E_{n;i,k}(y)z^{k}$} & \multirow{3}{*}{$\displaystyle\sum_{k=0}^{n}\binom{n}{k}y^{n-k}z^k$} & $E_{0;0,0}(y)=1$, $E_{0;i,k}(y)=0$ for $(i,k)\neq(0,0)$ \\ 
& & & & $E_{n+1;i,k}(y)=E_{n;i,k-1}(y)+\beta^{k}E_{n;i-1,k}(y)+y[k+1]_\beta E_{n;i,k+1}(y)$, \\
& & & & where $E_{n;i,k}(y)=0$ if $i<0$ or $k<0$ or $i+k>n$. \\
\hline
(iv) & $\displaystyle\sum_{k=0}^{n}\binom{n}{k}x^{n-k}y^k$ & $\displaystyle\sum_{\substack{i,k\ge 0\\ i+k\le n}} x^{i}E_{n;i,k}z^{k}$ & $\displaystyle\sum_{k=0}^{n}\binom{n}{k}y^{n-k}z^k$ & Same (ii) \\ 

\hline
(v) & $\displaystyle\sum_{k=0}^{n}x^{k}S_{n,k}(y)$ & $\displaystyle\sum_{k=0}^{n}\begin{bmatrix} n\\ k\end{bmatrix}_{\beta}x^{n-k}z^{k}$ & $\sum_{k=0}^{n}T_{n,k}(y)z^{k}$, & $T_{0,0}(y)=1,\ T_{0,k}(y)=0$ for $k\ge 1,\
T_{n+1,k}(y)=yT_{n,k}(y)+T_{n,k-1}(y-a)$\\
\hline
(vi) & $\displaystyle\sum_{k=0}^{n}\binom{n}{k}x^{n-k}y^k$ & $\displaystyle\sum_{k=0}^{n}\begin{bmatrix} n\\ k\end{bmatrix}_{\beta}x^{n-k}z^{k}$ & $\displaystyle\sum_{k=0}^{n}T_{n,k}(y)z^{k}$ & Same (v) with $a=1$ \\ 
\hline
\end{tabular}
}
\end{table}

\subsection{Algebras of type (3)}

We now turn to the two algebras of type (3) in Proposition \ref{3-dimensionalClassification}. Before, we recall the following notation in the setting of elements belonging to the field $\Bbbk$.

\begin{definition}[{\cite[Definition 1]{Sadjang2013}}]\label{mixedNumber}
Let $\alpha, \beta\in \Bbbk$. For later use in the algebras of type (3), recall that the \textit{mixed $(\beta,\alpha^{-1})$-number} is defined as
\begin{equation}
\langle r\rangle_{\beta,\alpha^{-1}}
:=\sum_{j=0}^{r-1}\beta^{r-1-j}\alpha^{-j}=
\begin{cases}
\dfrac{\beta^{r}-\alpha^{-r}}{\beta-\alpha^{-1}},& \beta\neq \alpha^{-1},\\
r\beta^{r-1},& \beta=\alpha^{-1},
\end{cases}
\end{equation}
and the \textit{key linear polynomial} is defined as
\begin{equation}
P_r(y):=\langle r\rangle_{\beta,\alpha^{-1}}y+b[r]_\beta\in\Bbbk[y], \text{ for } r\ge 1.
\end{equation}
\end{definition}

\subsubsection{Algebras of type (3)(i)}

\begin{proposition}\label{Case(3)(i)}
Let $A$ be the $\Bbbk$-algebra generated by $x,y,z$ with relations
\[
yz - \alpha zy = 0, \quad zx - \beta xz = y + b \quad {\rm and} \quad xy - \alpha yx = 0,
\]
where $\alpha,\beta\in\Bbbk^{\times}$ and $b\in\Bbbk$. Then:
\begin{enumerate}
\item [\rm (1)] For all $m,n\in\mathbb{N}$,
\[
y^n x^m=\alpha^{-mn}x^m y^n \quad {\rm and} \quad z^n y^m=\alpha^{-mn}y^m z^n.
\]
Moreover, for all $m, n\in\mathbb{N}$ there exist uniquely determined polynomials $W^{(m)}_{n,k}(y)\in\Bbbk[y]$, for $0\le k\le \min\{m,n\}$, such that
\begin{equation}\label{Wdefalpha3i}
z^n x^m=\sum_{k=0}^{\min\{m,n\}} x^{m-k}W^{(m)}_{n,k}(y)z^{n-k}.
\end{equation}
These are characterized by
\[
W^{(m)}_{0,0}(y)=1,\ W^{(m)}_{0,k}(y)=0 \text{ for } k\ge 1,
\]
and the explicit recursion {\rm (}with the convention $W^{(m)}_{n,-1}(y):=0${\rm )}
\begin{equation}\label{Wrecalpha3i}
W^{(m)}_{n+1,k}(y)=\beta^{m-k}W^{(m)}_{n,k}(\alpha^{-1}y)+P_{m-k+1}(y)W^{(m)}_{n,k-1}(y), 
\end{equation}
for $n\ge 0,\ k\ge 0$.

\item [\rm (2)] For all $m,n,s\in\mathbb{N}$,
\[
(x^n y^m)^s=\alpha^{-mn\binom{s}{2}}x^{ns}y^{ms}, \ (y^n z^m)^s=\alpha^{-mn\binom{s}{2}}y^{ns}z^{ms}.
\]
Moreover, $(x^n z^m)^s$ has a PBW expansion
\begin{equation}\label{Udefalpha3i}
(x^n z^m)^s=\sum_{\ell=0}^{\min\{ns,ms\}} x^{ns-\ell}U^{(n,m)}_{s,\ell}(y)z^{ms-\ell},
\ U^{(n,m)}_{s,\ell}(y)\in\Bbbk[y],
\end{equation}
with $U^{(n,m)}_{1,0}(y)=1$ and $U^{(n,m)}_{1,\ell}(y)=0$ for $\ell\ge 1$ and the explicit recursion
\begin{equation}\label{Urecalpha3i}
U^{(n,m)}_{s+1,\ell}(y)=\sum_{k=0}^{\min\{n,\ell\}}
U^{(n,m)}_{s,\ell-k}(\alpha^{-(n-k)}y)W^{(n)}_{ms-(\ell-k),k}(y), 
\end{equation}
for $s\ge 1,\ \ell\ge 0$, where $W^{(n)}_{r,k}(y)$ are the normal ordering coefficients from \eqref{Wdefalpha3i} {\rm (}with $m$ replaced by $n${\rm )}.

\item [\rm (3)] There exist uniquely determined polynomials $V_{s,\ell}(y)\in\Bbbk[y]$, for $0\le \ell\le s$ such that
\begin{equation}\label{Vdefalpha3i}
(xyz)^s=\sum_{\ell=0}^{s} x^{s-\ell}V_{s,\ell}(y)z^{s-\ell}.
\end{equation}
These satisfy $V_{1,0}(y)=y$, $V_{1,\ell}(y)=0$ for $\ell\ge 1$ and the explicit recursion
\begin{equation}\label{Vrecalpha3i}
V_{s+1,\ell}(y)=\alpha^{-(s-\ell)}y\Big(\beta^{s-\ell}V_{s,\ell}(\alpha^{-1}y)+P_{s-\ell+1}(y)V_{s,\ell-1}(y)\Big),
\end{equation}
for $s\ge 1, \ell\ge 0$, with $V_{s,-1}(y):=0$ and $P_r(y)$ as in Definition \ref{mixedNumber}.

\item [\rm (4)] For all $n,m,t,s\in\mathbb{N}$, there exist uniquely determined polynomials $R^{(n,m,t)}_{s,\ell}(y)\in\Bbbk[y]$, for $0\le \ell\le \min\{ns,ts\}$ such that
\begin{equation}\label{Rdefalpha3i}
(x^n y^m z^t)^s=\sum_{\ell=0}^{\min\{ns,ts\}} x^{ns-\ell}R^{(n,m,t)}_{s,\ell}(y)z^{ts-\ell}.
\end{equation}

These satisfy $R^{(n,m,t)}_{1,0}(y)=y^m$ and $R^{(n,m,t)}_{1,\ell}(y)=0$ for $\ell\ge 1$ and the explicit recursion given by
\begin{equation}\label{Rrecalpha3i}
R^{(n,m,t)}_{s+1,\ell}(y)=\alpha^{-(ts-\ell)m}y^{m}\sum_{k=0}^{\min\{n,\ell\}}R^{(n,m,t)}_{s,\ell-k}(\alpha^{-(n-k)}y)
W^{(n)}_{ts-(\ell-k),k}(y),
\end{equation}
for $s\ge 1, \ell\ge 0$.

\item[\rm (5)] Since $yx=\alpha^{-1}xy$, the $q$-binomial theorem yields
\[
(x+y)^n=\sum_{k=0}^{n}\begin{bmatrix} n\\ k\end{bmatrix}_{\alpha^{-1}}x^{n-k}y^{k}.
\]
Since $zy=\alpha^{-1}yz$, one also has
\[
(y+z)^n=\sum_{k=0}^{n}\begin{bmatrix} n\\ k\end{bmatrix}_{\alpha^{-1}}y^{n-k}z^{k}.
\]
Finally, $(x+z)^n$ admits a unique PBW expansion
\begin{equation}\label{Edefalpha3i}
(x+z)^n=\sum_{\substack{i,k\ge 0\\ i+k\le n}} x^{i}E_{n;i,k}(y)z^{k},
\ E_{n;i,k}(y)\in\Bbbk[y],
\end{equation}
with $E_{0;0,0}(y)=1$, $E_{0;i,k}(y)=0$ for $(i,k)\neq(0,0)$ and the explicit recursion
\begin{equation}\label{Erecalpha3i}
E_{n+1;i,k}(y)=E_{n;i,k-1}(y)+\beta^{k}E_{n;i-1,k}(\alpha^{-1}y) + P_{k+1}(y)E_{n;i,k+1}(y),
\end{equation}
where $E_{n;i,k}(y)=0$ if $i<0$ or $k<0$ or $i+k>n$.
\end{enumerate}
\end{proposition}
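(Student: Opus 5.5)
The argument rests on two basic commutation rules, and every later part is a bookkeeping consequence of them. First, from $xy=\alpha yx$ and $yz=\alpha zy$ we get $yx=\alpha^{-1}xy$ and $zy=\alpha^{-1}yz$. A double induction, exactly as in Proposition \ref{Case(1)}(1), then gives $y^nx^m=\alpha^{-mn}x^my^n$ and $z^ny^m=\alpha^{-mn}y^mz^n$. It also gives the polynomial shift rules
\[
f(y)x^r=x^rf(\alpha^{-r}y) \quad\text{and}\quad z^rf(y)=f(\alpha^{-r}y)z^r \qquad (f\in\Bbbk[y]).
\]
Second, from $zx=\beta xz+y+b$ I will prove by induction on $m$ that
\[
zx^m=\beta^mx^mz+x^{m-1}P_m(y).
\]
For the inductive step, write $zx^{m+1}=(zx^m)x$ and push $P_m(y)$ past $x$ to get $P_m(\alpha^{-1}y)$. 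The resulting coefficient is
\[
\beta^m(y+b)+\alpha^{-1}\langle m\rangle_{\beta,\alpha^{-1}}y+b[m]_\beta .
\]
The identities $\beta^m+\alpha^{-1}\langle m\rangle=\langle m+1\rangle$ and $\beta^m+[m]_\beta=[m+1]_\beta$ turn this into $P_{m+1}(y)$.

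For \eqref{Wdefalpha3i} and \eqref{Wrecalpha3i}, existence and uniqueness come from the PBW basis. Multiply $z^nx^m=\sum_k x^{m-k}W^{(m)}_{n,k}(y)z^{n-k}$ on the left by $z$ and apply the rule above to $zx^{m-k}$. This produces the term
\[
\beta^{m-k}x^{m-k}zW^{(m)}_{n,k}(y)z^{n-k}=\beta^{m-k}x^{m-k}W^{(m)}_{n,k}(\alpha^{-1}y)z^{n-k+1}
\]
and the term $x^{m-k-1}P_{m-k}(y)W^{(m)}_{n,k}(y)z^{n-k}$. Reindexing the second by $k\mapsto k-1$ gives \eqref{Wrecalpha3i}, and the initial values are clear. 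I expect this to be the main obstacle. One has to check carefully that the shifted argument lands on the first summand, as $W(\alpha^{-1}y)$, and that $P$ is evaluated at the unshifted $y$; this order is what the left-multiplication convention forces.

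Part (2) for $(x^ny^m)^s$ and $(y^nz^m)^s$ follows from the $q$-commuting lemma in the proof of Proposition \ref{Case(1)}(2), with $q=\alpha^{-mn}$. For $(x^nz^m)^s$ I right-multiply the expansion by $x^nz^m$ and expand $z^{ms-\ell'}x^n$ via \eqref{Wdefalpha3i}. I then move $U_{s,\ell'}(y)$ past $x^{n-k}$, which turns it into $U_{s,\ell'}(\alpha^{-(n-k)}y)$. Collecting terms with $\ell=\ell'+k$ gives \eqref{Urecalpha3i}.

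Parts (3) and (4) are the same computation with the extra factor $y^m$. After expanding $z^{ts-\ell'}x^n$, the remaining $z^{ts-\ell'-k}=z^{ts-\ell}$ must be moved past $y^m$. This yields $\alpha^{-(ts-\ell)m}y^m$ on the left, and since $y$-polynomials commute it can be pulled to the front, giving \eqref{Rrecalpha3i}. Part (3) is the case $n=m=t=1$: there $W^{(1)}_{r,0}=\beta^r$ after the shift and $W^{(1)}_{r,1}=P_r(y)$. Substituting these gives \eqref{Vrecalpha3i}. The indices should be checked against the statement, with $r=s-\ell+1$ in the $V_{s,\ell-1}$ term.

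For part (5), the first two identities are the Gaussian binomial theorem from Proposition \ref{Case(1)}(5) with $q=\alpha^{-1}$. For $(x+z)^n$ I left-multiply by $(x+z)$, which only ever needs $zx^i$. The piece $z\cdot x^iE(y)z^k$ equals $\beta^ix^iE(\alpha^{-1}y)z^{k+1}+x^{i-1}P_i(y)E(y)z^k$. This gives a recursion with the roles of the indices swapped relative to \eqref{Erecalpha3i}, so I will right-multiply instead. Then $x^iE_{n;i,k}(y)z^kx$ is expanded by $z^kx=\beta^kxz^k+P_k(y)z^{k-1}$, the case $m=1$ of \eqref{Wdefalpha3i} with $W^{(1)}_{k,1}=P_k(y)$. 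Moving $E(y)$ past $x$ produces $E(\alpha^{-1}y)$. Collecting coefficients of $x^iz^k$ then yields \eqref{Erecalpha3i}.
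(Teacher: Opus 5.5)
Your proposal is correct and follows essentially the same route as the paper: the twists $f(y)x^r=x^rf(\alpha^{-r}y)$ and $z^rf(y)=f(\alpha^{-r}y)z^r$, the identity $zx^m=\beta^mx^mz+x^{m-1}P_m(y)$, left multiplication by $z$ for the $W$-recursion, and right multiplication by the block via $z^kx=\beta^kxz^k+P_k(y)z^{k-1}$ for $U$, $V$, $R$ and $E$. The differences are cosmetic: you split off the last rather than the first $x$ in the induction for $zx^m$, and you obtain (3) as the case $n=m=t=1$ of (4) rather than computing it directly. The one fact you use without proof, $W^{(1)}_{r,1}(y)=P_r(y)$, follows by a one-line induction on $r$ from the same identity $\beta^r+\alpha^{-1}\langle r\rangle_{\beta,\alpha^{-1}}=\langle r+1\rangle_{\beta,\alpha^{-1}}$ that you already use.
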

\begin{proof}
\begin{enumerate}
\item [\rm (1)] The relations $yx=\alpha^{-1}xy$ and $zy=\alpha^{-1}yz$ imply by induction that 
$$
y^n x^m=\alpha^{-mn}x^m y^n \quad {\rm and} \quad z^n y^m=\alpha^{-mn}y^m z^n.
$$
For the normal ordering of $z^n x^m$, first note that for any $f(y)\in\Bbbk[y]$,
\begin{equation}\label{twistsalpha3i}
z f(y)=f(\alpha^{-1}y) z \quad {\rm and} \quad f(y) x=x f(\alpha^{-1}y),
\end{equation}
which implies that $f(y)x^r = x^r f(\alpha^{-r}y)$.
Next, prove by induction on $m\ge 1$ that
\begin{equation}\label{zxmalpha3i}
zx^m=\beta^m x^m z+x^{m-1}P_m(y),
\end{equation}
with $P_m(y)$ as in Definition \ref{mixedNumber}. Indeed,
\[
zx^m=(zx)x^{m-1}=(\beta xz+y+b)x^{m-1} =\beta x(zx^{m-1})+(y+b)x^{m-1},
\]
and using $y x^{m-1}=\alpha^{-(m-1)}x^{m-1}y$ gives the recursion
$$
P_m(y) = \beta P_{m-1}(y)+\alpha^{-(m-1)}y + b
$$ 
with $P_1(y) = y+b$.

Finally, define $W^{(m)}_{n,k}(y)$ by \eqref{Wdefalpha3i}. Writing
$$
z^{n+1}x^m = z\left(\sum_k x^{m-k}W^{(m)}_{n,k}(y)z^{n-k}\right),
$$ 
expanding $z x^{m-k}$ via \eqref{zxmalpha3i} and using $zW(y)=W(\alpha^{-1}y)z$ from \eqref{twistsalpha3i}, one obtains \eqref{Wrecalpha3i} by collecting the coefficients of $x^{m-k}z^{(n+1)-k}$ and using PBW uniqueness.

\item [\rm (2)] Since $y^m x^n=\alpha^{-mn}x^n y^m$ and $z^m y^n=\alpha^{-mn}y^n z^m$, the standard $q$-commuting lemma
yields
\[
(x^n y^m)^s=\alpha^{-mn\binom{s}{2}}x^{ns}y^{ms} \quad {\rm and} \quad (y^n z^m)^s=\alpha^{-mn\binom{s}{2}}y^{ns}z^{ms}.
\]
For $(x^n z^m)^s$, PBW gives \eqref{Udefalpha3i}. Multiply \eqref{Udefalpha3i} by $x^n z^m$ on the right and normal order each subword $z^{ms-(\ell-k)}x^n$ using \eqref{Wdefalpha3i}. The only additional step is commuting the intermediate polynomial $U^{(n,m)}_{s,\ell}(y)$ past $x^{n-k}$ using that
$$
f(y)x^{n-k}=x^{n-k}f(\alpha^{-(n-k)}y),
$$
which produces exactly the argument twist in \eqref{Urecalpha3i}.

\item [\rm (3)] Assume \eqref{Vdefalpha3i} holds for some $s$. Multiply by $xyz$ on the right and set $r:=s-\ell$:
\[
(xyz)^{s+1}=\sum_{\ell}x^{s-\ell}V_{s,\ell}(y) z^{r}xyz.
\]
Using the special case 
$$
z^{r}x=\beta^{r}x z^{r}+P_r(y)z^{r-1}
$$ 
(the case $m=1$ of \eqref{Wdefalpha3i} or a direct induction from $zx=\beta xz+y+b$ and $z f(y)=f(\alpha^{-1}y)z$) and then commuting $z^r$ past $y$ via $z^r y=\alpha^{-r}y z^r$, together with $V(y)x=xV(\alpha^{-1}y)$, yields the two contributions
\[
\alpha^{-r}\beta^{r} x^{s-\ell+1}yV_{s,\ell}(\alpha^{-1}y)z^{r+1} 
\quad {\rm and} \quad \alpha^{-(r-1)}x^{s-\ell}yP_r(y)V_{s,\ell}(y)z^{r}.
\]
Reindexing the second contribution (it increases the loss by $1$) and collecting coefficients gives \eqref{Vrecalpha3i}.

\item [\rm (4)] Write \eqref{Rdefalpha3i} for $s$ and multiply by $x^n y^m z^t$ on the right. Normal order $z^{ts-\ell}x^n$ via \eqref{Wdefalpha3i}, commute $R_{s,\ell}(y)$ past $x^{n-k}$ using
$f(y)x^{n-k}=x^{n-k}f(\alpha^{-(n-k)}y)$ and commute $z^{ts-\ell-k}$ past $y^m$ using 
$$
z^{r}y^m = \alpha^{-rm}y^m z^{r}.
$$
The resulting scalar factor is $\alpha^{-(ts-\ell)m}$ (independent of $k$) and collecting coefficients yields \eqref{Rrecalpha3i}.

\item [\rm (5)] The first two identities are direct applications of the $q$-binomial theorem with parameter $q=\alpha^{-1}$.
For \eqref{Edefalpha3i}, expand 
$$
(x+z)^{n+1}=(x+z)^n x+(x+z)^n z
$$
and normal order $z^k x$ via $z^k x=\beta^k x z^k+P_k(y)z^{k-1}$. The $\beta^k$-term forces the twist $E(\alpha^{-1}y)$ when commuting $E(y)$ past $x$, and the $P_k(y)z^{k-1}$-term produces the coupling with $E_{n;i,k+1}(y)$ after reindexing. Collecting coefficients gives \eqref{Erecalpha3i}.
\end{enumerate}
\end{proof}

\subsubsection{Algebras of type (3)(ii)}

\begin{proposition}\label{case(3)(ii)}
Let $A$ be the $\Bbbk$-algebra generated by $x,y,z$ with relations
\[
yz - \alpha zy = 0, \quad zx - \beta xz = b \quad {\rm and} \quad xy - \alpha yx = 0,
\]    
where $\alpha,\beta\in\Bbbk^{\times}$ and $b\in\Bbbk$. Then the following hold:

\begin{enumerate}
\item[\rm (1)] For all $m,n\in\mathbb{N}$,
\[
y^n x^m=\alpha^{-mn}x^m y^n \quad {\rm and} \quad z^n y^m=\alpha^{-mn}y^m z^n, 
\]
and
\begin{equation}\label{znxmclosedcase3ii}
z^n x^m=\sum_{k=0}^{\min\{m,n\}}\beta^{(m-k)(n-k)}b^k
\begin{bmatrix} m\\ k\end{bmatrix}_{\beta,\alpha^{-1}}\begin{bmatrix} n\\ k\end{bmatrix}_{\beta,\alpha^{-1}}\langle k\rangle_{\beta,\alpha^{-1}}!x^{m-k}z^{n-k},
\end{equation}
where 
\begin{align*}
\begin{bmatrix} r\\ k\end{bmatrix}_{\beta,\alpha^{-1}} &:=\dfrac{\langle r\rangle_{\beta,\alpha^{-1}}!}{\langle r-k\rangle_{\beta,\alpha^{-1}}!\langle k\rangle_{\beta,\alpha^{-1}}!}, \\
\langle r\rangle_{\beta,\alpha^{-1}}! &:={\displaystyle\prod_{j=1}^{r}\langle j\rangle_{\beta,\alpha^{-1}}}, \\
\langle r\rangle^{\underline{k}}_{\beta,\alpha^{-1}} &:=\displaystyle\prod_{j=0}^{k-1}\langle r-j\rangle _{\beta,\alpha^{-1}}.
\end{align*}
The coefficients $W^{(m)}_{n,k}$ are characterized by
\begin{equation}\label{Wdefcase3ii}
W^{(m)}_{0,0}=1 \quad {\rm and} \quad W^{(m)}_{0,k}=0, \text{ for } k\ge 1,
\end{equation}
and the explicit recursion (with $W^{(m)}_{n,-1}:=0$)
\begin{equation}\label{Wreccase3ii}
W^{(m)}_{n+1,k}=\beta^{m-k}W^{(m)}_{n,k} +b\langle m-k+1\rangle_{\beta,\alpha^{-1}}W^{(m)}_{n,k-1}, \text{ for } n\ge 0,\ k\ge 0.
\end{equation}

\item[\rm (2)] For all $m,n,s\in\mathbb{N}$,
\[
(x^n y^m)^s=\alpha^{-mn\binom{s}{2}}x^{ns}y^{ms}, \ (y^n z^m)^s=\alpha^{-mn\binom{s}{2}}y^{ns}z^{ms}.
\]
Moreover, 
\begin{equation}\label{Udefcase3ii}
(x^n z^m)^s=\sum_{\ell=0}^{\min\{ns,ms\}} x^{ns-\ell}U^{(n,m)}_{s,\ell}z^{ms-\ell}, \ U^{(n,m)}_{s,\ell}\in\Bbbk,
\end{equation}
with $U^{(n,m)}_{1,0}=1$ and $U^{(n,m)}_{1,\ell}=0$ for $\ell\ge 1$ and the explicit recursion
\begin{equation}\label{Ureccase3ii}
U^{(n,m)}_{s+1,\ell}=\sum_{k=0}^{\min\{n,\ell\}}U^{(n,m)}_{s,\ell-k}W^{(n)}_{ms-(\ell-k),k}, \text{ for } s\ge 1, \ell\ge 0.
\end{equation}

\item[\rm (3)] For all $s\in\mathbb{N}$,
\begin{equation}\label{Vdefcase3ii}
(xyz)^s=\sum_{\ell=0}^{s} x^{s-\ell}V_{s,\ell}y^{s}z^{s-\ell},
\ V_{s,\ell}\in\Bbbk,
\end{equation}
with $V_{1,0}=1$ and $V_{1,\ell}=0$ for $\ell\ge 1$ and the explicit recursion
\begin{equation}\label{Vreccase3ii}
V_{s+1,\ell}=\beta^{s-\ell}V_{s,\ell}+b\langle s-\ell+1\rangle_{\beta,\alpha^{-1}}V_{s,\ell-1}, \text{ for } s\ge 1,\ \ell\ge 0,
\end{equation}
with $V_{s,-1}:=0$.

\item[\rm (4)] For all $n,m,t,s\in\mathbb{N}$, 
\begin{equation*}
(x^{n}y^{m}z^{t})^{s}=\sum_{\ell=0}^{\min\{ns,ts\}}
C^{(n,m,t)}_{s,\ell}x^{ns-\ell}y^{ms}z^{ts-\ell}
\end{equation*}
with $C^{(n,m,t)}_{1,0}=1$ and $C^{(n,m,t)}_{1,\ell}=0$ for $\ell\ge1$ and the explicit recursion
\begin{equation*}
C^{(n,m,t)}_{s+1,\ell}=\alpha^{-m(ts-\ell)}\sum_{k=0}^{\min\{n,\ell\}}
\alpha^{-ms(n-k)}C^{(n,m,t)}_{s,\ell-k}\Theta_{ts-(\ell-k),n}(k),
\end{equation*}
where,
\[
\Theta_{r,n}(k):=\beta^{(n-k)(r-k)}b^{k}\frac{[r]^{\underline{k}}_{\beta}[n]^{\underline{k}}_{\beta}}{[k]_{\beta}!}.
\]

\item[\rm (5)] Since $yx=\alpha^{-1}xy$, the $\alpha^{-1}$-binomial theorem yields
\[
(x+y)^n=\sum_{k=0}^{n}\begin{bmatrix} n\\ k\end{bmatrix}_{\alpha^{-1}}x^{n-k}y^{k}.
\]
Since $zy = \alpha^{-1}yz$, one also has
\[
(y+z)^n=\sum_{k=0}^{n}\begin{bmatrix} n\\ k\end{bmatrix}_{\alpha^{-1}}y^{n-k}z^{k}.
\]
Finally, 
\begin{equation}\label{Edefcase3ii}
(x+z)^n=\sum_{\substack{i,k\ge 0\\ i+k\le n}} x^{i}E_{n;i,k}z^{k},
\ E_{n;i,k}\in\Bbbk[b],
\end{equation}
with $E_{0;0,0}=1$ and $E_{0;i,k}=0$ for $(i,k)\neq(0,0)$ and the explicit recursion
\begin{equation}\label{Ereccase3ii}
E_{n+1;i,k}=E_{n;i,k-1}+\beta^{k}E_{n;i-1,k}+b\langle k+1\rangle_{\beta,\alpha^{-1}}E_{n;i,k+1},
\end{equation}
where $E_{n;i,k}=0$ if $i<0$ or $k<0$ or $i+k>n$.
\end{enumerate}
\end{proposition}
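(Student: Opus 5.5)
The plan is to reduce almost everything to two structural facts. First, $y$ is $\alpha^{-1}$-normal with respect to both $x$ and $z$: from $yx=\alpha^{-1}xy$ and $zy=\alpha^{-1}yz$ we get
\[
f(y)x^r=x^rf(\alpha^{-r}y) \quad {\rm and} \quad z^rf(y)=f(\alpha^{-r}y)z^r.
\]
Second, the subalgebra generated by $x$ and $z$ is defined by the single relation $zx=\beta xz+b$. This is exactly the relation treated in Proposition \ref{case(2)(iv)}.

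\textbf{Part (1).} The identities for $y^nx^m$ and $z^ny^m$ follow by double induction, as in Proposition \ref{Case(1)}(1).

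For $z^nx^m$ I would first prove $zx^m=\beta^mx^mz+c_m x^{m-1}$ by induction on $m$, writing
\[
zx^m=(\beta xz+b)x^{m-1},
\]
which gives $c_m=\beta c_{m-1}+b$. Next I would substitute this into
\[
z^{n+1}x^m=z\sum_k x^{m-k}W^{(m)}_{n,k}z^{n-k}
\]
and collect coefficients using PBW uniqueness, as in Proposition \ref{case(2)(ii)}(1). This yields the triangular recursion \eqref{Wreccase3ii}. Finally, I would check that the claimed closed form \eqref{znxmclosedcase3ii} satisfies the same recursion and the same initial values, using a Pascal-type identity for the relevant bracket.

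\textbf{Main obstacle.} This step is the one I would handle first, and carefully. Since no $y$ enters the relation $zx-\beta xz=b$, the recursion gives $c_m=b[m]_\beta$ rather than $b\langle m\rangle_{\beta,\alpha^{-1}}$. For example,
\[
zx^2=\beta^2x^2z+b(1+\beta)x,
\]
whereas $\langle 2\rangle_{\beta,\alpha^{-1}}=\beta+\alpha^{-1}$. So before building on \eqref{znxmclosedcase3ii} I would test the cases $n=1$ and $m=1,2$ directly. If the mixed brackets do not survive this test, I would prove the statement with $[\,\cdot\,]_\beta$ in place of $\langle\cdot\rangle_{\beta,\alpha^{-1}}$, so that the closed form coincides with \eqref{znxmclosedconstbcomm2iv} and $\Theta_{r,n}(k)$. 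In that case the mixed brackets are correct only when $\alpha=1$, and I would record this.

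\textbf{Part (2).} The first two identities come from the $q$-commuting lemma in the proof of Proposition \ref{Case(1)}(2), with $q=\alpha^{-mn}$. The expansion of $(x^nz^m)^s$ lives entirely in the $x,z$ subalgebra. I would therefore multiply by $x^nz^m$ on the right, normal order each $z^{ms-(\ell-k)}x^n$ by part (1), and group terms by total loss, which gives \eqref{Ureccase3ii}.

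\textbf{Parts (3) and (4).} I would move every $y$ to the middle. Moving $y^m$ leftward past $x$'s and rightward past $z$'s produces only powers of $\alpha^{-1}$. Concretely, in $B^{s}\cdot x^ny^mz^t$ with $B=x^ny^mz^t$:
\begin{itemize}
\item the accumulated $y^{ms}$ passes $x^{n-k}$, giving the factor $\alpha^{-ms(n-k)}$;
\item the new $y^m$ is passed by $z^{ts-\ell}$, giving the factor $\alpha^{-m(ts-\ell)}$.
\end{itemize}
This is exactly the recursion claimed in (4). Part (3) is then the case $n=m=t=1$. Here I would check whether the $\alpha$-factors $\alpha^{-(s-\ell)-s(1-k)}$ are absent from \eqref{Vreccase3ii} or are absorbed elsewhere, and correct the recursion accordingly.

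\textbf{Part (5).} The first two identities are the Gaussian binomial theorem with $q=\alpha^{-1}$, as in Proposition \ref{Case(1)}(5). For $(x+z)^n$ I would expand $(x+z)^{n+1}=(x+z)^nx+(x+z)^nz$ and reduce each $z^kx=\beta^kxz^k+c_kz^{k-1}$. Collecting coefficients gives \eqref{Ereccase3ii}, with the same caveat about which bracket appears in $c_k$.
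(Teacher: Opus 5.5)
Your route is the same as the paper's: induction on $m$ for $zx^m$, right multiplication by the block, normal ordering of the middle subword, PBW uniqueness, and pure $\alpha$-bookkeeping for $y$. Your suspicion is correct, so drop the conditional phrasing. The printed statement is false in (1), (3) and (5), and (2) inherits the error through the $W$'s it refers to.

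The paper's proof obtains the mixed brackets only through the asserted identity $\beta\langle m-1\rangle_{\beta,\alpha^{-1}}+1=\langle m\rangle_{\beta,\alpha^{-1}}$. It justifies this by a sum formula that does not match Definition \ref{mixedNumber}. From that definition one actually has $\langle m\rangle_{\beta,\alpha^{-1}}=\beta\langle m-1\rangle_{\beta,\alpha^{-1}}+\alpha^{-(m-1)}$. The mixed number is the $y$-coefficient of $P_m(y)$ in type (3)(i), and even there the constant $b$ comes with $[m]_\beta$. Since $zx-\beta xz=b$ contains no $y$, only $b[m]_\beta$ can occur here.

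Your computation $zx^2=\beta^2x^2z+b(1+\beta)x$ already refutes \eqref{znxmclosedcase3ii} and \eqref{Wreccase3ii} when $b\neq0$ and $\alpha\neq1$. Similarly, the coefficient of $z$ in $(x+z)^3$ is $b(2+\beta)$, not the value $b(1+\beta+\alpha^{-1})$ predicted by \eqref{Ereccase3ii}.

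What you can prove is the $[\,\cdot\,]_\beta$ version, i.e.\ the formulas of Proposition \ref{case(2)(iv)}. The Pascal identity your verification of the closed form needs is
\[
\begin{bmatrix} n+1\\ k\end{bmatrix}_{\beta}=\begin{bmatrix} n\\ k\end{bmatrix}_{\beta}+\beta^{n+1-k}\begin{bmatrix} n\\ k-1\end{bmatrix}_{\beta}.
\]
Item (4) is correct as written, since the paper states it with $\Theta_{r,n}(k)$ built from $[\,\cdot\,]_\beta$. Your derivation of its two $\alpha$-factors is exactly the paper's.

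For (3), the $\alpha$-factors are genuinely missing, not absorbed elsewhere. Directly, $(xyz)^2=\beta\alpha^{-2}x^2y^2z^2+bxy^2z$, whereas \eqref{Vreccase3ii} gives $V_{2,0}=\beta$. This fails whenever $\alpha^2\neq1$, even for $b=0$. The correct recursion is your specialization of (4):
\[
V_{s+1,\ell}=\alpha^{-(s-\ell)}\bigl(\alpha^{-s}\beta^{s-\ell}V_{s,\ell}+b[s-\ell+1]_\beta V_{s,\ell-1}\bigr).
\]

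There is a cleaner way to see the structure. For a word $w$ in $x,z$ one has $yw=\sigma(w)y$, where $\sigma(x)=\alpha^{-1}x$ and $\sigma(z)=\alpha z$. Hence $(xyz)^s=\alpha^{s}(xz)^sy^s$. Then $z^{s-\ell}y^s=\alpha^{-s(s-\ell)}y^sz^{s-\ell}$ gives $V_{s,\ell}=\alpha^{-s(s-1-\ell)}U^{(1,1)}_{s,\ell}$. Here $U^{(1,1)}_{s,\ell}$ are the coefficients of $(xz)^s$, and they satisfy \eqref{Vreccase3ii} with $[\,\cdot\,]_\beta$ in place of the mixed bracket.

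The paper's sketch for (3) uses a global factor $\alpha^{-\binom{s}{2}}$ and the identification $V_{s,\ell}=W^{(s)}_{s,\ell}$. It cannot be turned into a proof of the printed statement. The $\alpha$-prefactor depends on $\ell$, and already $(xz)^2=\beta x^2z^2+bxz\neq z^2x^2$.
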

\begin{proof}
\begin{enumerate}
\item [\rm (1)] The identities $y^n x^m=\alpha^{-mn}x^m y^n$ and $z^n y^m=\alpha^{-mn}y^m z^n$ follow by induction.
For $zx^m$, note that $b$ is central and
\[
zx^m=(zx)x^{m-1}=(\beta xz+b)x^{m-1} =\beta x(zx^{m-1})+b x^{m-1},
\]
so if $zx^{m-1}=\beta^{m-1}x^{m-1}z+b[m-1]_{\beta,q}x^{m-2}$, then
\[
zx^m=\beta^m x^m z+b\big(\beta[m-1]_{\beta,q}+1\big)x^{m-1}.
\]
Using that $\beta[m-1]_{\beta,q}+1=[m]_{\beta,q}$, which is immediate from the sum definition 
$$
\langle m\rangle_{\beta,q}=1+\beta q^{0}+\beta^{2}q^{1}+\cdots+\beta^{m-1}q^{m-2} \quad {\rm with} \quad q = \alpha^{-1},
$$
we obtain that $zx^m=\beta^m x^m z+b[m]_{\beta,q}x^{m-1}$. Iterating yields the recursion \eqref{Wreccase3ii} and solving it gives the closed form \eqref{znxmclosedcase3ii}.

\item [\rm (2)] The first two identities follow from the $q$-commutation relations with parameter $q=\alpha^{-1}$ and the
standard $q$-commuting lemma. The expansion \eqref{Udefcase3ii} is PBW and multiplying by $x^n z^m$ on the right, then normal ordering the subwords $z^{ms-(\ell-k)}x^n$ via \eqref{Wdefcase3ii} yields \eqref{Ureccase3ii}.

\item [\rm (3)] The factor $\alpha^{-\binom{s}{2}}$ comes from moving the $y$'s to the middle when expanding $(xyz)^s$, using $yx=\alpha^{-1}xy$ and $zy=\alpha^{-1}yz$. The remaining $(xz)^s$ is handled by the same mechanism as in \textup{(1)}, hence the recursion \eqref{Vreccase3ii} and the identification $V_{s,\ell}=W^{(s)}_{s,\ell}$.

\item [\rm (4)] Let $B:=x^{n}y^{m}z^{t}$. Since $yx=\alpha^{-1}xy$ and $zy=\alpha^{-1}yz$, normal ordering produces only scalar factors in $\alpha$; and since $zx=\beta xz+b$, the only source of losses (simultaneous decreases in the powers of $x$ and $z$) comes from the constant term $b$.

Set
\begin{equation}\label{Thetacase3ii}
\Theta_{r,n}(k):=\beta^{(n-k)(r-k)}b^{k} \frac{[r]^{\underline{k}}_{\beta}[n]^{\underline{k}}_{\beta}}{[k]_{\beta}!}.
\end{equation}
Then, in $A$ we have that
\begin{equation}\label{zrxncase3ii}
z^{r}x^{n}=\sum_{k=0}^{\min\{r,n\}}\Theta_{r,n}(k) x^{n-k}z^{r-k}.
\end{equation}
For each $s\ge 1$ there exists a unique PBW expansion
\begin{equation}\label{blockPBWcase3ii_poly}
(x^{n}y^{m}z^{t})^{s}=\sum_{\ell=0}^{\min\{ns,ts\}}x^{ns-\ell}R^{(n,m,t)}_{s,\ell}(y) z^{ts-\ell}, \quad R^{(n,m,t)}_{s,\ell}(y)\in\Bbbk[y],
\end{equation}
with
\[
R^{(n,m,t)}_{s,\ell}(y)=C^{(n,m,t)}_{s,\ell} y^{ms} \text{ with } C^{(n,m,t)}_{s,\ell}\in\Bbbk.
\]
Equivalently,
\begin{equation}\label{blockPBWcase3ii}
(x^{n}y^{m}z^{t})^{s}=\sum_{\ell=0}^{\min\{ns,ts\}}C^{(n,m,t)}_{s,\ell} x^{ns-\ell}y^{ms}z^{ts-\ell}.
\end{equation}
The coefficients are determined by
\[
C^{(n,m,t)}_{1,0}=1,\qquad C^{(n,m,t)}_{1,\ell}=0 \text{ for } \ell\ge 1,
\]
and, for $s\ge 1$ and $\ell\ge 0$,
\begin{equation}\label{Creccase3ii}
C^{(n,m,t)}_{s+1,\ell}
=\alpha^{-m(ts-\ell)}\sum_{k=0}^{\min\{n,\ell\}}
\alpha^{-ms(n-k)} C^{(n,m,t)}_{s,\ell-k} \Theta_{ts-(\ell-k),n}(k),
\end{equation}
where $\Theta_{r,n}(k)$ is as in \eqref{Thetacase3ii} ($C^{(n,m,t)}_{s,r}=0$ whenever $r<0$.)

For $s\ge 2$ and $0\le \ell\le \min\{ns,ts\}$, set $L_0:=0$ and $L_j:=k_1+\cdots+k_j$.
Then
\begin{equation}\label{Cclosedcase3ii}
C^{(n,m,t)}_{s,\ell}
=\sum_{\substack{k_1,\dots,k_{s-1}\ge 0\\ k_1+\cdots+k_{s-1}=\ell}}
\ \prod_{j=1}^{s-1}\Big(\alpha^{-m(tj-L_j)}\alpha^{-m j (n-k_j)} \Theta_{tj-L_{j-1},n}(k_j)\Big),
\end{equation}
where $\Theta_{tj-L_{j-1},n}(k_j)=0$ if $k_j>tj-L_{j-1}$, so the sum is finite.

It suffices to take $m=n$ in \eqref{blockPBWcase3ii}--\eqref{Cclosedcase3ii}; in particular,
\[
(x^{n}y^{n}z^{t})^{s}=\sum_{\ell=0}^{\min\{ns,ts\}}C^{(n,n,t)}_{s,\ell} x^{ns-\ell}y^{ns}z^{ts-\ell},
\]
with $C^{(n,n,t)}_{s,\ell}$ given by \eqref{Creccase3ii} or \eqref{Cclosedcase3ii}.

\item [\rm (5)] The first two are $q$-binomial theorems with parameter $q=\alpha^{-1}$.
For $(x+z)^n$, expand as in \eqref{Edefcase3ii} and use $z^k x=\beta^k x z^k+b[k]_{\beta,q}z^{k-1}$ to normal order $(x+z)^{n+1}=(x+z)^n x+(x+z)^n z$, yielding \eqref{Ereccase3ii}.
\end{enumerate}
\end{proof}

\subsection{Algebras of type (4)}

\begin{proposition}\label{Case(4)}
Let $A$ be the $\Bbbk$-algebra generated by $x,y,z$ with relations
\[
yz - \alpha zy = a_1x+b_1, \quad zx - \alpha xz = a_2y + b_2, \quad {\rm and} \quad xy - \alpha yx = a_3z+b_3,
\]
where $\alpha\in\Bbbk^{\times}$ with $\alpha\neq 1$ and $a_i,b_i\in\Bbbk$. 

For any triple-indexed family $F(i,j,k)$ we use the convention $F(i,j,k)=0$ if any index is negative.

Define the left-multiplication structure constants $\mathsf Y_{i,j,k}(p,q,r), \mathsf Z_{i,j,k}(p,q,r)\in\Bbbk$ by the unique PBW expansions
\begin{align}\label{YZstruct4}
    y x^i y^j z^k = &\ \sum_{p,q,r\ge 0}\mathsf Y_{i,j,k}(p,q,r) x^p y^q z^r, \quad {\rm and} \\
    z x^i y^j z^k = &\ \sum_{p,q,r\ge 0}\mathsf Z_{i,j,k}(p,q,r) x^p y^q z^r. \notag
\end{align}
They are characterized by the initial conditions
\begin{equation}\label{Ybase4}
\mathsf Y_{0,j,k}(p,q,r)=\delta_{p,0}\delta_{q,j+1}\delta_{r,k}, \quad {\rm for}\ j, k\ge 0,
\end{equation}
together with the explicit coupled recursions, for $i\ge 1$ and $j,k\ge 0$,
\begin{align}\label{Yrec4}
   \mathsf Y_{i,j,k}(p,q,r) = &\, \alpha^{-1}\mathsf Y_{i-1,j,k}(p-1,q,r) \\
   &\, -\alpha^{-1}a_3\mathsf Z_{i-1,j,k}(p,q,r) -\alpha^{-1}b_3\delta_{p,i - 1}\delta_{q,j}\delta_{r,k}, \notag
\end{align}
and
\begin{align}\label{Zreci4}
\mathsf Z_{i,j,k}(p,q,r) = &\ \alpha\mathsf Z_{i-1,j,k}(p-1,q,r) \\
&\, + a_2\mathsf Y_{i-1,j,k}(p,q,r)+b_2\delta_{p,i-1}\delta_{q,j}\delta_{r,k}, \, {\rm for}\ i\ge 1,  \notag
\end{align}
and the remaining base layer $i=0$ is determined by the unique coefficients $\Pi_j(p,q,r)\in\Bbbk$ defined by
\begin{equation}\label{Pidef4}
zy^j=\sum_{p,q,r\ge 0}\prod_j (p,q,r) x^p y^q z^r, \quad {\rm for}\, j \ge 0,
\end{equation}
so that
\begin{equation}\label{Zbase4}
\mathsf Z_{0,j,k}(p,q,r) = \prod_j(p,q,r-k), \quad {\rm for}\, j, k\ge 0,
\end{equation}
with $\prod_j(p,q,r-k) = 0$ if $r-k<0$.

The coefficients $\prod_j$ are characterized as 
\[
\prod_0(p,q,r) = \delta_{p,0}\delta_{q,0}\delta_{r,1},
\]
and, for $j\ge 0$, the explicit recursion obtained from 
$$
zy = \alpha^{-1}yz-\alpha^{-1}a_1x-\alpha^{-1}b_1
$$
given by 
{\small{
\begin{equation}\label{Pirec4}
\prod_{j+1}(p,q,r)=\alpha^{-1}\sum_{u,v,w\ge 0}\prod_j(u,v,w)\mathsf Y_{u,v,w}(p,q,r)-\alpha^{-1}a_1\delta_{p,1}\delta_{q,j}\delta_{r,0}-\alpha^{-1}b_1\delta_{p,0}\delta_{q,j}\delta_{r,0}.
\end{equation}
}}
Then, the following assertions hold.
\begin{enumerate}
\item [\rm (1)] For all $m,n\in \mathbb{N}$, there exist unique coefficients
\[
W^{(m)}_{n}(i,j,k),\ \widetilde W^{(m)}_{n}(i,j,k),\ \widehat W^{(m)}_{n}(i,j,k)\in\Bbbk
\]
such that
\begin{align}
y^n x^m&=\sum_{i,j,k\ge 0}W^{(m)}_{n}(i,j,k)x^i y^j z^k,\label{ynxmcase4}\\
z^n x^m&=\sum_{i,j,k\ge 0}\widetilde W^{(m)}_{n}(i,j,k)x^i y^j z^k,\label{znxmcase4}\\
z^n y^m&=\sum_{i,j,k\ge 0}\widehat W^{(m)}_{n}(i,j,k)x^i y^j z^k.\label{znymcase4}
\end{align}
They are characterized by the initial conditions
\begin{align*}
W^{(m)}_{0}(i,j,k) = &\, \delta_{i,m}\delta_{j,0}\delta_{k,0} \\    \widetilde W^{(m)}_{0}(i,j,k) = &\, \delta_{i,m}\delta_{j,0}\delta_{k,0}, \quad {\rm and} \\
\widehat W^{(m)}_{0}(i,j,k) = &\, \delta_{i,0}\delta_{j,m}\delta_{k,0}
\end{align*}
and, for $n\ge 0$, the explicit coefficient recursions
\begin{equation}\label{Wreccase4}
\begin{aligned}
W^{(m)}_{n+1}(p,q,r)&=\sum_{i,j,k\ge 0}W^{(m)}_{n}(i,j,k)\mathsf Y_{i,j,k}(p,q,r),\\
\widetilde W^{(m)}_{n+1}(p,q,r)&=\sum_{i,j,k\ge 0}\widetilde W^{(m)}_{n}(i,j,k)\mathsf Z_{i,j,k}(p,q,r), \quad {\rm and} \\
\widehat W^{(m)}_{n+1}(p,q,r)&=\sum_{i,j,k\ge 0}\widehat W^{(m)}_{n}(i,j,k)\mathsf Z_{i,j,k}(p,q,r).
\end{aligned}
\end{equation}

\item [\rm (2)] For $m,n,s\in \mathbb{N}$ there exist unique coefficients
\[
U^{(n,m)}_{s}(i,j,k),\ \widetilde U^{(n,m)}_{s}(i,j,k),\ \widehat U^{(n,m)}_{s}(i,j,k)\in\Bbbk
\]
such that
\begin{align}
(x^n y^m)^s&=\sum_{i,j,k\ge 0}U^{(n,m)}_{s}(i,j,k)x^i y^j z^k,\label{Uxycase4}\\
(x^n z^m)^s&=\sum_{i,j,k\ge 0}\widetilde U^{(n,m)}_{s}(i,j,k)x^i y^j z^k, \quad {\rm and} \label{Uxzcase4}\\
(y^n z^m)^s&=\sum_{i,j,k\ge 0}\widehat U^{(n,m)}_{s}(i,j,k)x^i y^j z^k.\label{Uyzcase4}
\end{align}
They satisfy the initial conditions {\rm (}all other entries $0${\rm )} given by 
\[
U^{(n,m)}_{1}(n,m,0)=1,\ \widetilde U^{(n,m)}_{1}(n,0,m)=1,\ \widehat U^{(n,m)}_{1}(0,n,m)=1,
\]
and the following explicit recursions, for $s\ge 1$:

Define intermediate arrays $U^{[0]}_{s},U^{[1]}_{s},\dots,U^{[m]}_{s}$ by
\[
U^{[0]}_{s}(i,j,k):=U^{(n,m)}_{s}(i-n,j,k),
\]
and for $t=1,\dots,m$,
\begin{equation}\label{Uxystep4}
U^{[t]}_{s}(p,q,r)=\sum_{i,j,k\ge 0}U^{[t-1]}_{s}(i,j,k)\mathsf Y_{i,j,k}(p,q,r).
\end{equation}
Then
\begin{equation}\label{Uxyreccase4}
U^{(n,m)}_{s+1}(p,q,r)=U^{[m]}_{s}(p,q,r) \text{ for } s\ge 1.
\end{equation}
Now, define $\widetilde U^{[0]}_{s},\widetilde U^{[1]}_{s},\dots,\widetilde U^{[m]}_{s}$ by
\[
\widetilde U^{[0]}_{s}(i,j,k):=\widetilde U^{(n,m)}_{s}(i-n,j,k),
\]
and for $t=1,\dots,m$,
\begin{equation}\label{Uxzstep4}
\widetilde U^{[t]}_{s}(p,q,r)=\sum_{i,j,k\ge 0}\widetilde U^{[t-1]}_{s}(i,j,k)\mathsf Z_{i,j,k}(p,q,r).
\end{equation}
Then
\begin{equation}\label{Uxzreccase4}
\widetilde U^{(n,m)}_{s+1}(p,q,r)=\widetilde U^{[m]}_{s}(p,q,r)\qquad(s\ge 1).
\end{equation}
Also, define $\widehat U^{[0]}_{s},\dots,\widehat U^{[n+m]}_{s}$ by $\widehat U^{[0]}_{s}:=\widehat U^{(n,m)}_{s}$ and
\begin{align*}
\widehat U^{[t]}_{s}(p,q,r) = &\, \sum_{i,j,k\ge 0}\widehat U^{[t-1]}_{s}(i,j,k)\mathsf Y_{i,j,k}(p,q,r) \quad \text{ for } 1\le t\le n, \\ 
\widehat U^{[t]}_{s}(p,q,r) = &\, \sum_{i,j,k\ge 0}\widehat U^{[t-1]}_{s}(i,j,k)\mathsf Z_{i,j,k}(p,q,r) \quad \text{ for } n+1\le t\le n+m.
\end{align*}
Then, 
\begin{equation}\label{Uyzreccase4}
\widehat U^{(n,m)}_{s+1}(p,q,r)=\widehat U^{[n+m]}_{s}(p,q,r)\text{ for } s\ge 1.
\end{equation}

\item [\rm (3)] There exist unique coefficients $V_s(i,j,k)\in\Bbbk$ such that
\begin{equation}\label{Vcase4}
(xyz)^s=\sum_{i,j,k\ge 0}V_s(i,j,k)x^i y^j z^k,
\end{equation}
with $V_1(1,1,1)=1$ (and $0$ otherwise) and for $s\ge 1$ the explicit recursion
\begin{equation}\label{Vreccase4}
V_{s+1}(p,q,r)=\sum_{i,j,k\ge 0}\left(\sum_{u,v,w\ge 0}V_s(u,v,w)\mathsf Z_{u,v,w}(i,j,k)\right)\mathsf Y_{i,j,k}(p-1,q,r),
\end{equation}
where the shift $p\mapsto p-1$ corresponds to the leading left multiplication by $x$ in $xyz=x\cdot y\cdot z$.

\item[\rm (4)] For all $n,m,t,s\in \mathbb{N}$, there exist unique coefficients $R^{(n,m,t)}_{s}(i,j,k)\in\Bbbk$ such that
\begin{equation}\label{Rcase4}
(x^n y^m z^t)^s=\sum_{i,j,k\ge 0}R^{(n,m,t)}_{s}(i,j,k)x^i y^j z^k,
\end{equation}
with $R^{(n,m,t)}_{1}(n,m,t)=1$ {\rm (}and $0$ otherwise{\rm )}. Moreover, for $s\ge 1$ one has the explicit recursion obtained by iterating $x^n$ {\rm (}index shift{\rm )}, then $y$ (using $\mathsf Y$) $m$ times, then $z$ (using $\mathsf Z$) $t$ times: define 
\begin{align*}
R^{[0]}_{s}(i,j,k) := &\, R^{(n,m,t)}_{s}(i-n,j,k), \\
R^{[u]}_{s}(p,q,r) = &\, \sum_{i,j,k\ge 0}R^{[u-1]}_{s}(i,j,k)\mathsf Y_{i,j,k}(p,q,r)\text{ for } 1\le u\le m, \\
R^{[m+v]}_{s}(p,q,r) = &\ \sum_{i,j,k\ge 0}R^{[m+v-1]}_{s}(i,j,k)\mathsf Z_{i,j,k}(p,q,r)\text{ for } 1\le v\le t,
\end{align*}
and then
\begin{equation}\label{Rreccase4}
R^{(n,m,t)}_{s+1}(p,q,r)=R^{[m+t]}_{s}(p,q,r)\text{ for } s\ge 1.
\end{equation}

\item[\rm (5)] There exist unique coefficients $E^{\langle xy\rangle}_{n}(i,j,k)$, $E^{\langle xz\rangle}_{n}(i,j,k)$, $E^{\langle yz\rangle}_{n}(i,j,k)\in\Bbbk$ such that
\begin{align}
(x+y)^n&=\sum_{i,j,k\ge 0}E^{\langle xy\rangle}_{n}(i,j,k)x^i y^j z^k,\label{Exycase4}\\
(x+z)^n&=\sum_{i,j,k\ge 0}E^{\langle xz\rangle}_{n}(i,j,k)x^i y^j z^k,\label{Exzcase4}\\
(y+z)^n&=\sum_{i,j,k\ge 0}E^{\langle yz\rangle}_{n}(i,j,k)x^i y^j z^k.\label{Eyzcase4}
\end{align}
These satisfy $E^{\langle\ast\rangle}_{0}(0,0,0)=1$ {\rm (}and $0$ otherwise{\rm )} and the explicit coefficient recursions, for $n\ge 0$:
\begin{equation}\label{binreccase4}
\begin{aligned}
E^{\langle xy\rangle}_{n+1}(p,q,r) & = E^{\langle xy\rangle}_{n}(p-1,q,r)+\sum_{i,j,k\ge 0}E^{\langle xy\rangle}_{n}(i,j,k)\mathsf Y_{i,j,k}(p,q,r),\\
E^{\langle xz\rangle}_{n+1}(p,q,r) & = E^{\langle xz\rangle}_{n}(p-1,q,r)+\sum_{i,j,k\ge 0}E^{\langle xz\rangle}_{n}(i,j,k)\mathsf Z_{i,j,k}(p,q,r),\\
E^{\langle yz\rangle}_{n+1}(p,q,r) & = \sum_{i,j,k\ge 0}E^{\langle yz\rangle}_{n}(i,j,k)\mathsf Y_{i,j,k}(p,q,r) +\sum_{i,j,k\ge 0}E^{\langle yz\rangle}_{n}(i,j,k)\mathsf Z_{i,j,k}(p,q,r).
\end{aligned}
\end{equation}
\end{enumerate}
\end{proposition}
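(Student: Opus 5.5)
The plan is to encode everything through the left regular representation of $A$ on its PBW basis. By Definition \ref{3dimensionaldimension}(ii), the monomials $x^py^qz^r$ form a $\Bbbk$-basis. Hence every product of the form $y\cdot x^iy^jz^k$ or $z\cdot x^iy^jz^k$ has a unique expansion, which is exactly the existence and uniqueness of $\mathsf Y_{i,j,k}$, $\mathsf Z_{i,j,k}$ and $\Pi_j$. Likewise every element listed in (1)--(5) has unique PBW coefficients. What remains is to show that these coefficients obey the displayed recursions. Left multiplication by $x$ is simply the shift $p\mapsto p+1$. Left multiplication by $y$ and by $z$ act on coefficient arrays $F$ through the linear maps
\[
F\mapsto \sum_{i,j,k}F(i,j,k)\,\mathsf Y_{i,j,k}(\cdot)\quad\text{and}\quad F\mapsto \sum_{i,j,k}F(i,j,k)\,\mathsf Z_{i,j,k}(\cdot),
\]
respectively. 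Once this dictionary is in place, each recursion in (1)--(5) is the coefficient form of an obvious factorization.

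First I verify the structure constants. I rewrite the relations as
\[
yx=\alpha^{-1}xy-\alpha^{-1}a_3z-\alpha^{-1}b_3,\qquad zx=\alpha xz+a_2y+b_2,\qquad zy=\alpha^{-1}yz-\alpha^{-1}a_1x-\alpha^{-1}b_1.
\]
For $i\ge1$ I write $y\,x^iy^jz^k=(yx)\,x^{i-1}y^jz^k$ and substitute. This gives $\alpha^{-1}x\,(y\,x^{i-1}y^jz^k)-\alpha^{-1}a_3\,z\,x^{i-1}y^jz^k-\alpha^{-1}b_3\,x^{i-1}y^jz^k$. Reading off coefficients yields \eqref{Yrec4}, where the leading $x$ produces the shift $p-1$. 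The same computation with $zx$ yields \eqref{Zreci4}. For $i=0$, the product $y\cdot y^jz^k$ is already normal, which gives \eqref{Ybase4}. The product $z\,y^jz^k=(zy^j)z^k$ gives \eqref{Zbase4}, since right multiplication by $z^k$ only shifts $r$. Finally $z y^{j+1}=(zy)y^j=\alpha^{-1}y\,(zy^j)-\alpha^{-1}a_1xy^j-\alpha^{-1}b_1y^j$, and expanding $zy^j$ by $\Pi_j$ and then applying $y$ through $\mathsf Y$ gives \eqref{Pirec4}.

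The step I expect to be the main obstacle is showing that this coupled system actually determines the constants, i.e.\ that it is not circular. The difficulty is that $\mathsf Y_{i}$ calls $\mathsf Z_{i-1}$, while $\mathsf Z_{0}$ calls $\Pi_j$, which in turn calls $\mathsf Y_{u,v,w}$ at arbitrary $u$. To resolve this I would use the degree filtration: every right-hand side of a relation has total degree at most $1<2$. An induction on total degree then shows that $\Pi_j(p,q,r)\neq0$ only when $p+q+r\le j+1$, and similarly for $\mathsf Y_{i,j,k}$ and $\mathsf Z_{i,j,k}$ with the bound $i+j+k+1$. Consequently every $\mathsf Y$ or $\mathsf Z$ appearing on a right-hand side is evaluated at a word of strictly smaller total degree, or at the same degree with smaller $i$, or inside $\Pi_j$ at smaller $j$. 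I would therefore run a well-founded induction on the pair (total degree, $i$). Any solution of the recursions must agree with the true PBW coefficients, and the true coefficients do satisfy the recursions, so uniqueness and existence both follow.

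With the dictionary established, parts (1)--(5) follow by iteration. For (1), $y^{n+1}x^m=y\cdot(y^nx^m)$ and $z^{n+1}x^m=z\cdot(z^nx^m)$, and similarly for $z^{n+1}y^m$. This gives \eqref{Wreccase4}, with the initial conditions read off from $x^m$ and $y^m$. For (2) and (4), I write $B^{s+1}=B\cdot B^s$ for the block $B$ and express left multiplication by $B$ as the composite of the letter operators. In doing so I will take care to match the order in which the statement composes the shift, the $\mathsf Y$-steps and the $\mathsf Z$-steps with the order of the letters of the block, since left multiplication applies the rightmost letter first. Where the two orders differ, I will instead use $B^{s+1}=B^s\cdot B$ together with the structure constants, so that the recursion is justified exactly as displayed. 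For (3), I use $(xyz)^{s+1}=x\cdot y\cdot z\cdot(xyz)^s$: the $\mathsf Z$-step is applied first, then the $\mathsf Y$-step, and finally the shift, which is \eqref{Vreccase4}. For (5), $(u+v)^{n+1}=u(u+v)^n+v(u+v)^n$ gives \eqref{binreccase4} by linearity of the left action.
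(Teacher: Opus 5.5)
Your treatment of the structure constants and of parts (1), (3) and (5) follows the paper's argument and is correct: you use the left regular action on the PBW basis, where $x$ acts by the shift $p\mapsto p+1$ and $y$, $z$ act through $\mathsf Y$, $\mathsf Z$.

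The gap is in (2) and (4). You rightly note that left multiplication applies the rightmost letter first, but you do not check whether the orders actually differ there. They differ in all four recursions. The displayed arrays apply the letter operators in the left-to-right order of the block. So $U^{[m]}_s$, $\widetilde U^{[m]}_s$, $\widehat U^{[n+m]}_s$ and $R^{[m+t]}_s$ are the coefficient arrays of $y^mx^n(x^ny^m)^s$, $z^mx^n(x^nz^m)^s$, $z^my^n(y^nz^m)^s$ and $z^ty^mx^n(x^ny^mz^t)^s$, not of the $(s+1)$-st powers.

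Your fallback $B^{s+1}=B^s\cdot B$ cannot rescue this. $\mathsf Y$, $\mathsf Z$ and the shift in $p$ encode \emph{left} multiplication. Right multiplication of $x^iy^jz^k$ by $x$ or $y$ requires moving that letter past $z^k$ and $y^j$, and none of these constants describes it; only right multiplication by $z$ is a shift, in $r$.

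In fact no argument can succeed, because the displayed recursions are false. Take $a_i=b_i=0$, so that $yx=\alpha^{-1}xy$, $zx=\alpha xz$ and $zy=\alpha^{-1}yz$, and take $n=m=s=1$:
\begin{itemize}
\item \eqref{Uxyreccase4} returns $y\,x^2y=\alpha^{-2}x^2y^2$, while $(xy)^2=x(yx)y=\alpha^{-1}x^2y^2$.
\item \eqref{Uxzreccase4} returns $z\,x^2z=\alpha^{2}x^2z^2$ instead of $(xz)^2=\alpha x^2z^2$.
\item \eqref{Uyzreccase4} returns $z\,y\,yz=\alpha^{-2}y^2z^2$ instead of $(yz)^2=\alpha^{-1}y^2z^2$.
\item With $n=m=t=1$, \eqref{Rreccase4} returns $z\,y\,x^2yz=\alpha^{-2}x^2y^2z^2$ instead of $(xyz)^2=\alpha^{-1}x^2y^2z^2$. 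The latter is what the correctly ordered \eqref{Vreccase4} gives.
\end{itemize}
Since $\alpha\neq 1$, each pair differs. The paper's own proof of (2)--(4) is the same ``left multiply and record the updates'' sentence, and it overlooks the same ordering problem.

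What your method does prove, with $B^{s+1}=B\cdot B^s$, is the reversed composition. Put $L_xF(p,q,r)=F(p-1,q,r)$, $L_yF=\sum_{i,j,k}F(i,j,k)\mathsf Y_{i,j,k}$ and $L_zF=\sum_{i,j,k}F(i,j,k)\mathsf Z_{i,j,k}$, with operators acting right to left. Then
\begin{itemize}
\item $U^{(n,m)}_{s+1}=L_x^{n}L_y^{m}U^{(n,m)}_s$,
\item $\widetilde U^{(n,m)}_{s+1}=L_x^{n}L_z^{m}\widetilde U^{(n,m)}_s$,
\item $\widehat U^{(n,m)}_{s+1}=L_y^{n}L_z^{m}\widehat U^{(n,m)}_s$,
\item $R^{(n,m,t)}_{s+1}=L_x^{n}L_y^{m}L_z^{t}R^{(n,m,t)}_s$.
\end{itemize}
That is, apply the $\mathsf Z$-steps first, then the $\mathsf Y$-steps, then the shift. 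You should state and prove these in place of the displayed recursions.

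A smaller point concerns your well-foundedness argument, which goes beyond the paper (it never addresses circularity of the coupled system). The bound $p+q+r\le j+1$ alone does not make your order decrease in \eqref{Pirec4}. There $\Pi_{j+1}$ calls $\mathsf Y_{u,v,w}$ on the support of $\Pi_j$, and through the $i$-recursions $\mathsf Y_{u,v,w}$ depends only on $\Pi_v$. What you need is $v\le j$ on that support. This holds because the degree-$(j+1)$ component of $zy^j$ is exactly $\alpha^{-j}y^jz$, which you see by passing to the associated graded ring, the skew polynomial ring. With this in hand, induct on $j$ for $\Pi_j$, and then, for fixed $(j,k)$, on $i$ for $\mathsf Y_{i,j,k}$ and $\mathsf Z_{i,j,k}$.
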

\begin{proof}
Equation \eqref{Ybase4} is immediate from $y\cdot y^j z^k=y^{j+1}z^k$. For $i\ge 1$,
\begin{align*}
yx^i y^j z^k = &\, (yx)x^{i-1}y^j z^k \\
= &\, \big(\alpha^{-1}xy-\alpha^{-1}a_3z-\alpha^{-1}b_3\big)x^{i-1}y^j z^k,
\end{align*}
and since $x$ commutes with $x^{i-1}$, the first term is $\alpha^{-1}x(yx^{i-1}y^j z^k)$, while the second term is $-\alpha^{-1}a_3(zx^{i-1}y^j z^k)$ and the last term is $-\alpha^{-1}b_3 x^{i-1}y^j z^k$. Comparing PBW coefficients yields \eqref{Yrec4}. The same argument using $zx=\alpha xz+a_2y+b_2$ gives \eqref{Zreci4}.

For $\prod_j$, consider the equalities
\begin{align*}
z y^{j+1} = &\, (zy)y^j \\
=&\, (\alpha^{-1}yz-\alpha^{-1}a_1x-\alpha^{-1}b_1)y^j \\
= &\, \alpha^{-1}y(z y^j)-\alpha^{-1}a_1xy^j-\alpha^{-1}b_1y^j
\end{align*}
expand $y(z y^j)$ using \eqref{YZstruct4} and collect coefficients to obtain \eqref{Pirec4}. Finally, \eqref{Zbase4} is just $z y^j z^k=(z y^j)z^k$.
\begin{enumerate}
\item [\rm (1)] Write $y^{n+1}x^m=y(y^n x^m)$ and expand $y^n x^m$ using \eqref{ynxmcase4}. Applying \eqref{YZstruct4} termwise and collecting coefficients yields the first recursion in \eqref{Wreccase4}. The other two follow identically from $z^{n+1}x^m=z(z^n x^m)$ and $z^{n+1}y^m=z(z^n y^m)$ using \eqref{YZstruct4}.

\item [\rm (2) - (4)] Since $a^{s+1}=a\cdot a^s$ for any fixed element $a\in A$, each recursion is obtained by left multiplying the PBW expansion of $a^s$ by the PBW monomial defining $a$ and repeatedly using \eqref{YZstruct4}. The intermediate arrays in \eqref{Uxystep4}-\eqref{Rreccase4} simply record the coefficient updates after each left multiplication by $x$ (index shift), $y$ (via $\mathsf Y$) and $z$ (via $\mathsf Z$). Uniqueness of the resulting coefficients follows from the PBW basis.

\item [\rm (5)] Expand $(x+y)^{n+1}=(x+y)(x+y)^n=x(x+y)^n+y(x+y)^n$ and use that left multiplication by $x$ shifts the $x$-exponent by $+1$, while left multiplication by $y$ is encoded by $\mathsf Y$. This gives the first line of \eqref{binreccase4}. The other two lines are analogous using $\mathsf Z$ for left multiplication by $z$.
\end{enumerate} 
\end{proof}

\subsection{Algebras of type (5)}

Finally, we consider algebras of type (5) in Proposition \ref{3-dimensionalClassification}. This type consists of five subcases, and for each of them the normal ordering behaviour can again be recorded in a compact, table-based form. More precisely, Tables \ref{alg(5)}-\ref{alg(5)3} summarize, for the five algebras (5)(i)-(5)(v), the explicit PBW reductions of the fundamental two-letter words $y^n x^m$, $z^n x^m$, and $z^n y^m$, together with the corresponding formulas for powers of blocks, the three-generator powers $(xyz)^s$ and $(x^n y^m z^t)^s$, and the noncommutative binomial expansions.

\subsubsection{Algebras of type (5)(i)}

\begin{proposition}\label{Case(5)(i)}
Let $A$ be the $\Bbbk$-algebra generated by $x,y,z$ with relations
\[
yz - zy = x,\ zx - xz = y \quad {\rm and} \quad xy - yx = z.
\]
For $m\in\mathbb N$, we set
\begin{align}\label{CScase5i}
C_m(u) := &\, \sum_{r=0}^{\lfloor m/2\rfloor}(-1)^r\binom{m}{2r}u^{m-2r} \\
S_m(u) := &\, \sum_{r=0}^{\lfloor (m-1)/2\rfloor}(-1)^r\binom{m}{2r+1}u^{m-2r-1},
\end{align}
{\rm(}with $S_0(u):=0${\rm )}. We adopt the convention that any coefficient array indexed by $\mathbb N^3$ is $0$ if one of its indices is negative.

Then, the following assertions hold.
\begin{enumerate}
\item[\rm (1)] For $m,n\in\mathbb N$, there exist unique scalars $W^{(m)}_{n}(i,j,k)\in\Bbbk$ such that
\begin{equation}\label{Wdefynxmcase5i}
y^n x^m=\sum_{i,j,k\ge 0}W^{(m)}_{n}(i,j,k)x^i y^j z^k.
\end{equation}
Note that
\[
W^{(m)}_{0}(i,j,k)=\delta_{i,m}\delta_{j,0}\delta_{k,0},
\]
and, for $n\ge 0$, also the explicit recursion:
\begin{align}
W^{(m)}_{n+1}(i,j,k) = &\, \sum_{r=0}^{\lfloor m/2\rfloor}(-1)^r\binom{m}{2r}W^{(m-2r)}_{n}(i,j-1,k) \label{Wrecynxmcase5i} \\
& -\sum_{r=0}^{\lfloor (m-1)/2\rfloor}(-1)^r\binom{m}{2r+1}W^{(m-2r-1)}_{n}(i,j,k-1). \notag
\end{align}
For $a,b,c\in\mathbb N$ define $M^x_{a,b,c}(i,j,k)$, $M^y_{a,b,c}(i,j,k)$, $M^z_{a,b,c}(i,j,k)$ by the unique expansions
\begin{align*}
x^a y^b z^cx &\ =\sum_{i,j,k\ge 0}M^x_{a,b,c}(i,j,k) x^i y^j z^k,\\
x^a y^b z^cy &\ =\sum_{i,j,k\ge 0}M^y_{a,b,c}(i,j,k) x^i y^j z^k, \quad {\rm and} \\
x^a y^b z^cz &\ =\sum_{i,j,k\ge 0}M^z_{a,b,c}(i,j,k) x^i y^j z^k.
\end{align*}
These are given explicitly by
\begin{equation}\label{Mzkernelcase5i}
M^{z}_{a,b,c}(i,j,k)=\delta_{i,a}\delta_{j,b}\delta_{k,c+1},
\end{equation}
and, 
\begin{equation}\label{Mykernelcase5i}
\begin{aligned}
M^{y}_{a,b,c}(i,j,k)
&=\sum_{r=0}^{\lfloor c/2\rfloor}(-1)^r\binom{c}{2r}\delta_{i,a}\delta_{j,b+1}\delta_{k,c-2r}\\
&\quad-\sum_{r=0}^{\lfloor (c-1)/2\rfloor}(-1)^r\binom{c}{2r+1}
\sum_{p,q,t\ge 0}A_b(p,q,t)\delta_{i,a+p}\delta_{j,q}\delta_{k,t+c-2r-1},
\end{aligned}
\end{equation}
\begin{equation}\label{Mxkernelcase5i}
\begin{aligned}
M^{x}_{a,b,c}(i,j,k)
&=\sum_{r=0}^{\lfloor c/2\rfloor}(-1)^r\binom{c}{2r}
\sum_{p,q,t\ge 0}A_b(p,q,t)\delta_{i,a+p}\delta_{j,q}\delta_{k,t+c-2r}\\
&+\sum_{r=0}^{\lfloor (c-1)/2\rfloor}(-1)^r\binom{c}{2r+1}\delta_{i,a}\delta_{j,b+1}\delta_{k,c-2r-1}.
\end{aligned}
\end{equation}
For $m,n\in\mathbb N$, there exist unique coefficients $\widetilde W^{(m)}_{n}(i,j,k)\in\Bbbk$ such that
\begin{equation}\label{Wtildedefcase5i}
z^n x^m=\sum_{i,j,k\ge 0}\widetilde W^{(m)}_{n}(i,j,k) x^i y^j z^k,
\end{equation}
with $\widetilde W^{(m)}_{0}(i,j,k)=\delta_{i,m}\delta_{j,0}\delta_{k,0}$, and for $n\ge 0$ the explicit recursion is
\begin{equation}\label{Wtildereccase5i}
\begin{aligned}
\widetilde W^{(m)}_{n+1}(i,j,k) &=\sum_{r=0}^{\lfloor m/2\rfloor}(-1)^r\binom{m}{2r}\widetilde W^{(m-2r)}_{n}(i,j,k-1)\\
&+\sum_{r=0}^{\lfloor (m-1)/2\rfloor}(-1)^r\binom{m}{2r+1}
\sum_{a,b,c\ge 0}\widetilde W^{(m-2r-1)}_{n}(a,b,c)M^y_{a,b,c}(i,j,k).
\end{aligned}
\end{equation}
Similarly, for $m,n\in\mathbb N$ there exist unique $\widehat W^{(m)}_{n}(i,j,k)\in\Bbbk$ such that
\begin{equation}\label{Whatdefcase5i}
z^n y^m=\sum_{i,j,k\ge 0}\widehat W^{(m)}_{n}(i,j,k) x^i y^j z^k,
\end{equation}
with $\widehat W^{(m)}_{0}(i,j,k)=\delta_{i,0}\delta_{j,m}\delta_{k,0}$, and for $n\ge 0$ the explicit recursion is
\begin{equation}\label{Whatreccase5i}
\begin{aligned}
\widehat W^{(m)}_{n+1}(i,j,k) &=\sum_{r=0}^{\lfloor m/2\rfloor}(-1)^r\binom{m}{2r}\widehat W^{(m-2r)}_{n}(i,j,k-1)\\
&-\sum_{r=0}^{\lfloor (m-1)/2\rfloor}(-1)^r\binom{m}{2r+1}\sum_{a,b,c\ge 0}\widehat W^{(m-2r-1)}_{n}(a,b,c)M^x_{a,b,c}(i,j,k).
\end{aligned}
\end{equation}

\item [\rm (2)] For $n,m,s\in\mathbb N$ there exist unique coefficient arrays
$U^{(n,m)}_{s}(i,j,k)$, $\widetilde U^{(n,m)}_{s}(i,j,k)$, $\widehat U^{(n,m)}_{s}(i,j,k)\in\Bbbk$ such that
\begin{align}
(x^n y^m)^s&=\sum_{i,j,k\ge 0}U^{(n,m)}_{s}(i,j,k)x^iy^jz^k,\label{Uxydefcase5i}\\
(x^n z^m)^s&=\sum_{i,j,k\ge 0}\widetilde U^{(n,m)}_{s}(i,j,k)x^iy^jz^k,\label{Uxzdefcase5i}\\
(y^n z^m)^s&=\sum_{i,j,k\ge 0}\widehat U^{(n,m)}_{s}(i,j,k)x^iy^jz^k,\label{Uyzdefcase5i}
\end{align}
with the initial data
\begin{align*}
U^{(n,m)}_{1}(i,j,k) &\ =\delta_{i,n}\delta_{j,m}\delta_{k,0},\\
\widetilde U^{(n,m)}_{1}(i,j,k) &\ =\delta_{i,n}\delta_{j,0}\delta_{k,m},\\
\widehat U^{(n,m)}_{1}(i,j,k)&\ =\delta_{i,0}\delta_{j,n}\delta_{k,m}.
\end{align*}
These satisfy the following explicit coefficient recursions for $s\ge 1$:

Define intermediate arrays $U^{[0]}_{s},U^{[1]}_{s},\dots,U^{[n+m]}_{s}$ by $U^{[0]}_{s}:=U^{(n,m)}_{s}$ and
\begin{align*}
    U^{[r]}_{s}(i,j,k) = &\, \sum_{a,b,c\ge 0}U^{[r-1]}_{s}(a,b,c)M^x_{a,b,c}(i,j,k)\text{ for } 1\le r\le n, \\
    U^{[n+t]}_{s}(i,j,k) = &\, \sum_{a,b,c\ge 0}U^{[n+t-1]}_{s}(a,b,c)M^y_{a,b,c}(i,j,k)\text{ for } 1\le t\le m.
\end{align*}
Then
\begin{equation}\label{Uxyreccase5i}
U^{(n,m)}_{s+1}(i,j,k)=U^{[n+m]}_{s}(i,j,k).
\end{equation}
Define $\widetilde U^{[0]}_{s},\dots,\widetilde U^{[n]}_{s}$ by $\widetilde U^{[0]}_{s}:=\widetilde U^{(n,m)}_{s}$ and
\[
\widetilde U^{[r]}_{s}(i,j,k)=\sum_{a,b,c\ge 0}\widetilde U^{[r-1]}_{s}(a,b,c)M^x_{a,b,c}(i,j,k)\text{ for } 1\le r\le n.
\]
Then
\begin{equation}\label{Uxzreccase5i}
\widetilde U^{(n,m)}_{s+1}(i,j,k)=\widetilde U^{[n]}_{s}(i,j,k-m),
\end{equation}
with the convention $\widetilde U^{[n]}_{s}(i,j,k-m)=0$ if $k-m<0$.

Define $\widehat U^{[0]}_{s},\dots,\widehat U^{[n]}_{s}$ by $\widehat U^{[0]}_{s}:=\widehat U^{(n,m)}_{s}$ and
\[
\widehat U^{[r]}_{s}(i,j,k)=\sum_{a,b,c\ge 0}\widehat U^{[r-1]}_{s}(a,b,c)M^y_{a,b,c}(i,j,k)\text{ for } 1\le r\le n.
\]
Then
\begin{equation}\label{Uyzreccase5i}
\widehat U^{(n,m)}_{s+1}(i,j,k)=\widehat U^{[n]}_{s}(i,j,k-m),
\end{equation}
with $\widehat U^{[n]}_{s}(i,j,k-m)=0$ if $k-m<0$.

\item[\rm (3)] There exist unique coefficients $V_s(i,j,k)\in\Bbbk$ such that
\begin{equation}\label{Vdefcase5i}
(xyz)^s=\sum_{i,j,k\ge 0}V_s(i,j,k) x^i y^j z^k,\ V_1(i,j,k)=\delta_{i,1}\delta_{j,1}\delta_{k,1}.
\end{equation}

For $s\ge 1$, define $V^{[0]}_{s}:=V_s$ and
\[
V^{[1]}_{s}(i,j,k)=\sum_{a,b,c\ge 0}V^{[0]}_{s}(a,b,c) M^x_{a,b,c}(i,j,k),\
V^{[2]}_{s}(i,j,k)=\sum_{a,b,c\ge 0}V^{[1]}_{s}(a,b,c) M^y_{a,b,c}(i,j,k).
\]

Then the explicit coefficient recursion is
\begin{equation}\label{Vreccase5i}
V_{s+1}(i,j,k)=V^{[2]}_{s}(i,j,k-1).
\end{equation}

\item [\rm (4)] For all $n,m,t,s\in\mathbb N$, there exist unique coefficients $R^{(n,m,t)}_{s}(i,j,k)\in\Bbbk$ such that
\begin{equation}\label{Rdefcase5i}
(x^n y^m z^t)^s=\sum_{i,j,k\ge 0}R^{(n,m,t)}_{s}(i,j,k) x^i y^j z^k, \ R^{(n,m,t)}_{1}(i,j,k)=\delta_{i,n}\delta_{j,m}\delta_{k,t}.
\end{equation}
For $s\ge 1$, define $R^{[0]}_{s}:=R^{(n,m,t)}_{s}$, and set
\[
R^{[r]}_{s}(i,j,k)=\sum_{a,b,c\ge 0}R^{[r-1]}_{s}(a,b,c) M^x_{a,b,c}(i,j,k) \quad {\rm for} \, \,  1\le r\le n,
\]
\[
R^{[n+t']}_{s}(i,j,k)=\sum_{a,b,c\ge 0}R^{[n+t'-1]}_{s}(a,b,c) M^y_{a,b,c}(i,j,k) \quad {\rm for} \, \, 1\le t'\le m.
\]
The explicit coefficient recursion is
\begin{equation}\label{Rreccase5i}
R^{(n,m,t)}_{s+1}(i,j,k)=R^{[n+m]}_{s}(i,j,k-t),
\end{equation}
with $R^{[n+m]}_{s}(i,j,k-t)=0$ if $k-t<0$.

\item [\rm (5)] There exist unique coefficient arrays $E^{\langle xy\rangle}_{n}(i,j,k)$, $E^{\langle xz\rangle}_{n}(i,j,k)$, $E^{\langle yz\rangle}_{n}(i,j,k)$ belonging to $\Bbbk$ such that
\begin{align}
(x+y)^n&=\sum_{i,j,k\ge 0}E^{\langle xy\rangle}_{n}(i,j,k) x^i y^j z^k,\label{Exydefcase5i}\\
(x+z)^n&=\sum_{i,j,k\ge 0}E^{\langle xz\rangle}_{n}(i,j,k) x^i y^j z^k,\label{Exzdefcase5i} \quad {\rm and} \\
(y+z)^n&=\sum_{i,j,k\ge 0}E^{\langle yz\rangle}_{n}(i,j,k) x^i y^j z^k,\label{Eyzdefcase5i}
\end{align}
with $E^{\langle\ast\rangle}_{0}(i,j,k)=\delta_{i,0}\delta_{j,0}\delta_{k,0}$. For $n\ge 0$ they satisfy the explicit recursions
\begin{equation}\label{Ereccase5i}
\begin{aligned}
E^{\langle xy\rangle}_{n+1}(i,j,k)
&=\sum_{a,b,c\ge 0}E^{\langle xy\rangle}_{n}(a,b,c)\left(M^x_{a,b,c}(i,j,k)+M^y_{a,b,c}(i,j,k)\right),\\
E^{\langle xz\rangle}_{n+1}(i,j,k)
&=\sum_{a,b,c\ge 0}E^{\langle xz\rangle}_{n}(a,b,c)\left(M^x_{a,b,c}(i,j,k)+M^z_{a,b,c}(i,j,k)\right), \quad {\rm and} \\
E^{\langle yz\rangle}_{n+1}(i,j,k)
&=\sum_{a,b,c\ge 0}E^{\langle yz\rangle}_{n}(a,b,c)\left(M^y_{a,b,c}(i,j,k)+M^z_{a,b,c}(i,j,k)\right).
\end{aligned}
\end{equation}
\end{enumerate}
\end{proposition}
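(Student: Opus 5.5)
The plan is to reduce everything to one commutation lemma, a ``rotation binomial formula'', and then get every recursion by appending or prepending one generator at a time and invoking PBW uniqueness. I will write the defining relations as right-to-left rewriting rules
\[
yx = xy - z,\qquad zx = xz + y,\qquad zy = yz - x .
\]
The key observation is that moving $x$ to the left past the pair $(y,z)$ acts as a linear map $J$ on $\Bbbk y\oplus\Bbbk z$. Indeed, $(y,z)\,x = x\,(y,z) + J(y,z)$ with $J(y)=-z$ and $J(z)=y$, so $J^2=-\mathrm{id}$. By induction on $m$, using $J^{2r}=(-1)^r$ and $J^{2r+1}=(-1)^rJ$, one gets
\[
y x^m=\sum_{l=0}^{m}\binom{m}{l}x^{m-l}J^l(y)=\sum_{r}(-1)^r\binom{m}{2r}x^{m-2r}y-\sum_{r}(-1)^r\binom{m}{2r+1}x^{m-2r-1}z,
\]
and similarly
\[
z x^m=\sum_r(-1)^r\binom{m}{2r}x^{m-2r}z+\sum_r(-1)^r\binom{m}{2r+1}x^{m-2r-1}y .
\]
These are exactly the coefficients of $C_m$ and $S_m$ in \eqref{CScase5i}. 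The same argument applies to moving $y$ leftwards past the pair $(z,x)$, since $zy=yz-x$ and $xy=yx+z$; this gives $z^c y$ and $x y^b$ type expansions with the same binomial and sign pattern. I would isolate these as a preliminary lemma. Only the inductive step needs checking: multiply on the right by $x$ and apply Pascal's rule.

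Next I would derive the kernels $M^z,M^y,M^x$. The formula for $M^z$ is immediate, because $x^ay^bz^c\cdot z$ is already a PBW monomial. For $M^y$, I rewrite $z^c y$ with the rotation lemma, giving terms in $y\,z^{c-2r}$ and $x\,z^{c-2r-1}$. The first kind contributes $x^ay^{b+1}z^{c-2r}$ directly. The second kind requires moving $x$ left past $y^b$, and this is where the array $A_b(p,q,t)$ enters: I take it to be the PBW coefficients of $y^b x$, which the first formula above, applied with the roles of the letters arranged accordingly, provides. $M^x$ is handled symmetrically: first apply the rotation for $z^cx$, then move the resulting $x$ past $y^b$ via $A_b$. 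Uniqueness of all coefficient arrays comes from the PBW basis in Definition \ref{3dimensionaldimension}.

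With the kernels available, parts (1)--(5) are bookkeeping. Each power or product is built as $w^{s+1}=w^s\cdot w$, or $y^{n+1}x^m=y^n(yx^m)$ and $z^{n+1}x^m=z^n(zx^m)$, appending letters one at a time on the right and applying the corresponding $M$-kernel linearly. A trailing block $z^t$ simply shifts the $z$-index by $t$, which explains the shifts $k-m$, $k-1$, $k-t$ in \eqref{Uxzreccase5i}, \eqref{Vreccase5i} and \eqref{Rreccase5i}. For the binomials, $(u+v)^{n+1}=(u+v)^n u+(u+v)^n v$ gives the sum of the two kernels.

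The main obstacle is the $W$-recursions \eqref{Wrecynxmcase5i}, \eqref{Wtildereccase5i} and \eqref{Whatreccase5i}. There one right-multiplies a PBW expansion by $y$ (or $z$), and the stated shift $(i,j-1,k)$ is literally correct only when the $z$-exponent is zero. So I would first check that the terms involved have this shape. If they do not, I would reinterpret the $y$-term through $M^y$, as is already done for the second sums, and verify this at low $n$ and $m$ with the \texttt{SageMath} routine of Section \ref{Computationalsetup}.
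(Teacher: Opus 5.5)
Your route is the paper's own. Your operator $J$ is $\mathrm{ad}_x$ restricted to $\Bbbk y\oplus\Bbbk z$; the paper quotes the same identity $ba^m=\sum_k\binom{m}{k}a^{m-k}(\mathrm{ad}_a)^k(b)$. The kernels $M^x,M^y,M^z$ are derived the same way. The recursions \eqref{Wtildereccase5i}, \eqref{Whatreccase5i} and those of parts (2)--(5) follow by right multiplication exactly as you describe.

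The gap is in your last paragraph: you leave \eqref{Wrecynxmcase5i} conditional on a check, and that check fails. In fact the displayed recursion is false, so no argument can prove it. Since $yx=xy-z$, $W^{(1)}_1$ already has a term with $z$-exponent $1$. Right multiplication by $y$ does not shift that term, because $zy=yz-x$. Concretely, $y^2x=(xy-z)y-yz=xy^2-2yz+x$. By contrast, \eqref{Wrecynxmcase5i} with $m=n=1$ reads $W^{(1)}_2(i,j,k)=W^{(1)}_1(i,j-1,k)-W^{(0)}_1(i,j,k-1)$ and returns only $xy^2-2yz$. The paper's proof slips at exactly this point, asserting $y^nx^{m-2r}y=\sum_{i,j,k}W^{(m-2r)}_n(i,j,k)x^iy^{j+1}z^k$.

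Rather than a contingency plan, you should state that the first sum must be replaced by $\sum_{r}(-1)^r\binom{m}{2r}\sum_{a,b,c}W^{(m-2r)}_n(a,b,c)M^y_{a,b,c}(i,j,k)$. This parallels the second sum of \eqref{Wtildereccase5i}. Your argument proves this corrected recursion, and it recovers the missing $+x$ through $M^y_{0,0,1}$, i.e. $zy=yz-x$.

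A smaller point concerns $A_b$, which the statement never defines. Reading it as the PBW coefficients of $y^bx$ is right; that is what the paper's derivation of \eqref{Mykernelcase5i} uses. However, the rotation lemma does not produce these coefficients. Moving $x$ leftward past $y^b$ turns $x$ into $z$ and leaves words $zy^{b-2r-1}$, which are not PBW monomials. Instead, define $A_b:=W^{(1)}_b$ and compute it from the corrected $m=1$ recursion.

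This is not circular. We have $y^bx=xy^b+[y^b,x]$ with $[y^b,x]=-\sum_{k=0}^{b-1}y^kzy^{b-1-k}$ of filtered degree at most $b$. Hence every term of $A_b$ with positive $z$-exponent has $y$-exponent at most $b-1$. So passing from $A_b$ to $A_{b+1}$ uses the kernels $M^y$ only through $A_0,\dots,A_{b-1}$, and the definitions of $A_b$, $M^x$, $M^y$ and $W$ are well-founded by induction on $b$.
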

\begin{proof}
In the enveloping algebra $U(\mathfrak g)$, it is well-known \cite[Lemma 2.3]{Passman1992} that
\[
b a^m=\sum_{k=0}^{m}\binom{m}{k}a^{m-k}(\mathrm{ad}_a)^k(b),
\qquad \mathrm{ad}_a(b)=[b,a].
\]
Apply it with $(a,b)=(x,y)$ and $(a,b)=(x,z)$. Since
$$
\mathrm{ad}_x(y)=[y,x]=-z \quad {\rm and} \quad \mathrm{ad}_x(z)=[z,x]=y,
$$
it follows that 
$\mathrm{ad}_x^{2}(y)=-y$ and $\mathrm{ad}_x^{2}(z)=-z$, so the terms in the binomial sum split into $C_m$ and $S_m$. The same identity with $(a,b)=(y,z)$.
\begin{enumerate}
    \item [\rm (1)] Write $y^{n+1}x^m=y^n(yx^m)$ and expanded via \eqref{CScase5i}.
Since 
\[
y^n x^{m-2r}y=\sum_{i,j,k}W^{(m-2r)}_n(i,j,k) x^i y^{j+1}z^k
\]
and
\[
y^n x^{m-2r-1}z=\sum_{i,j,k}W^{(m-2r-1)}_n(i,j,k) x^i y^{j}z^{k+1},
\]
collecting coefficients gives \eqref{Wrecynxmcase5i}.

The formula \eqref{Mzkernelcase5i} is immediate. For \eqref{Mykernelcase5i}, write $x^a y^b z^c y=x^a y^b(z^c y)$ and expand $z^c y$ via \eqref{CScase5i}.
The term $x^a y^b(y z^{c-2r})$ is already in PBW order. The term $x^a y^b(x z^{c-2r-1})$ contains the subword $y^b x$; collecting coefficients yields \eqref{Mykernelcase5i}. The proof of \eqref{Mxkernelcase5i} is analogous.

Use $z^{n+1}x^m=z^n(zx^m)$. The $C_m(x)$-part appends a $z$ on the right, while the $S_m(x)$-part appends a $y$ on the right, handled coefficientwise by $M^y$; this gives \eqref{Wtildereccase5i}. Similarly, from $z^{n+1}y^m=z^n(zy^m)$, the $S_m(y)$-part appends an $x$ on the right, handled coefficientwise by $M^x$, yielding\eqref{Whatreccase5i}.

\item [\rm (2)--(5)] In each case, uniqueness of the PBW expansion defines the corresponding coefficient arrays.
Multiplying by the prescribed block on the right updates coefficients by repeated use of the explicit kernels
$M^x,M^y,M^z$ from \eqref{Mzkernelcase5i}-\eqref{Mxkernelcase5i}, which is exactly what the displayed recursions encode.
\end{enumerate}
\end{proof}

\subsubsection{Algebras of type (5)(ii)}

\begin{proposition}\label{Case(5)(ii)}
Let $A$ be the $\Bbbk$-algebra generated by $x,y,z$ with relations
\[
yz - zy = 0, \quad zx - xz=0 \quad {\rm and} \quad xy - yx = z.
\]
Write 
$$
m^{\underline{k}} := m(m-1)\cdots(m-k+1)
$$ 
for the falling factorial {\rm (}with $m^{\underline{0}}=1${\rm )} and
$\left\{\begin{matrix}s\\r\end{matrix}\right\}$ for the Stirling numbers of the second kind.

Then the following hold:
\begin{enumerate}
\item [\rm (1)] For all $m,n\in\mathbb N$,
\[
z^n x^m=x^m z^n \quad {\rm and} \quad z^n y^m=y^m z^n.
\]
Moreover, for all $m,n\in\mathbb N$ there exist uniquely determined scalars
$W^{(m)}_{n,k}\in\Bbbk$, for $0\le k\le \min\{m,n\}$, such that
\begin{equation}\label{Wdefcase5ii}
y^n x^m=\sum_{k=0}^{\min\{m,n\}} x^{m-k}W^{(m)}_{n,k} y^{n-k}z^{k}.
\end{equation}
These elements are characterized by
\[
W^{(m)}_{0,0}=1,\ W^{(m)}_{0,k}=0\ \text{ for } k\ge 1,
\]
and the explicit recursion (with $W^{(m)}_{n,-1}:=0$)
\begin{equation}\label{Wreccase5ii}
W^{(m)}_{n+1,k}=W^{(m)}_{n,k}-(m-k+1) W^{(m)}_{n,k-1},
\qquad n\ge 0,\ k\ge 0.
\end{equation}
In fact, one has the closed form
\begin{equation}\label{Wclosed_case5ii}
W^{(m)}_{n,k}=(-1)^k\binom{n}{k} m^{\underline{k}}.
\end{equation}

\item [\rm (2)] For all $m,n,s\in\mathbb N$,
\[
(x^n z^m)^s=x^{ns}z^{ms},\ (y^n z^m)^s=y^{ns}z^{ms}.
\]
Moreover, $(x^n y^m)^s$ admits a unique PBW expansion
\begin{equation}\label{Udefcase5ii}
(x^n y^m)^s=\sum_{\ell=0}^{\min\{ns,ms\}} x^{ns-\ell} U^{(n,m)}_{s,\ell} y^{ms-\ell}z^{\ell},
\ U^{(n,m)}_{s,\ell}\in\Bbbk,
\end{equation}
with $U^{(n,m)}_{1,0}=1$ and $U^{(n,m)}_{1,\ell}=0$ for $\ell\ge 1$, and the explicit recursion
\begin{equation}\label{Ureccase5ii}
U^{(n,m)}_{s+1,\ell}=\sum_{k=0}^{\min\{n,\ell\}}U^{(n,m)}_{s,\ell-k} W^{(n)}_{ms-(\ell-k), k},
\text{ for } s\ge 1,\ \ell\ge 0,
\end{equation}
where $W^{(n)}_{r,k}$ are the normal ordering coefficients from \eqref{Wdefcase5ii} (with $m$ replaced by $n$).

\item [\rm (3)] There exist uniquely determined scalars $V_{s,\ell}\in\Bbbk$, for $0\le \ell\le s$, such that
\begin{equation}\label{Vdefcase5ii}
(xyz)^s=\sum_{\ell=0}^{s} x^{ s-\ell} y^{ s-\ell} V_{s,\ell} z^{ s+\ell}.
\end{equation}
These satisfy $V_{1,0}=1$, $V_{1,\ell}=0$ for $\ell\ge 1$, and the explicit recursion {\rm (}with $V_{s,-1}:=0${\rm )}
\begin{equation}\label{Vreccase5ii}
V_{s+1,\ell}=V_{s,\ell}-(s+1-\ell) V_{s,\ell-1},
\qquad s\ge 1,\ \ell\ge 0.
\end{equation}
In fact,
\begin{equation}\label{Vclosedcase5ii}
V_{s,\ell}=(-1)^{\ell}\left\{\!\!\begin{matrix} s\\ s-\ell\end{matrix}\!\!\right\}.
\end{equation}

\item [\rm (4)] For all $n,m,t,s\in\mathbb N$, there exist uniquely determined scalars $R^{(n,m,t)}_{s,\ell}\in\Bbbk$,
for $0\le \ell\le \min\{ns,ms\}$, such that
\begin{equation}\label{Rdefcase5ii}
(x^n y^m z^t)^s=\sum_{\ell=0}^{\min\{ns,ms\}} x^{ns-\ell} R^{(n,m,t)}_{s,\ell} y^{ms-\ell}z^{ts+\ell}.
\end{equation}

These elements satisfy $R^{(n,m,t)}_{1,0}=1$ and $R^{(n,m,t)}_{1,\ell}=0$ for $\ell\ge 1$, and the explicit recursion
\begin{equation}\label{Rreccase5ii}
R^{(n,m,t)}_{s+1,\ell}=\sum_{k=0}^{\min\{n,\ell\}}R^{(n,m,t)}_{s,\ell-k} W^{(n)}_{ms-(\ell-k), k},
\text{ for } s\ge 1,\ \ell\ge 0,
\end{equation}
i.e., $R^{(n,m,t)}_{s,\ell}=U^{(n,m)}_{s,\ell}$ for all $s,\ell$.

\item [\rm (5)] Since $z$ is central and commutes with $x$ and $y$, one has the ordinary binomial formulas
\[
(x+z)^n=\sum_{k=0}^{n}\binom{n}{k}x^{n-k}z^{k},\ (y+z)^n=\sum_{k=0}^{n}\binom{n}{k}y^{n-k}z^{k}.
\]
Finally, $(x+y)^n$ admits a unique PBW expansion
\begin{equation}\label{Edefcase5ii}
(x+y)^n=\sum_{\substack{i,k\ge 0\\ i+2k\le n}} x^{i} E_{n;i,k} y^{ n-i-2k} z^{k},
\ E_{n;i,k}\in\Bbbk,
\end{equation}
with $E_{0;0,0}=1$ and $E_{0;i,k}=0$ for $(i,k)\neq(0,0)$, and the explicit recursion
\begin{equation}\label{Ereccase5ii}
E_{n+1;i,k}=E_{n;i-1,k}+E_{n;i,k}-(n+2-i-2k) E_{n;i,k-1},
\end{equation}
where $E_{n;i,k}=0$ if $i<0$ or $k<0$ or $i+2k>n$.
Moreover, one has the closed form
\begin{equation}\label{Eclosedcase5ii}
E_{n;i,k}=(-1)^k \frac{n!}{i! (n-i-2k)! k! 2^{k}}, \text{ for } i+2k\le n.
\end{equation}
\end{enumerate}
\end{proposition}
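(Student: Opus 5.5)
Throughout, the plan uses two facts. First, $z$ is central: it commutes with $x$ and $y$ and trivially with itself. Second, the only nontrivial rewriting rule is $yx=xy-z$. Most items reduce to a one-letter commutation formula, and each closed form is then checked against its recursion.

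First step. Since $z$ is central, $z^nx^m=x^mz^n$ and $z^ny^m=y^mz^n$. By induction on $m$ from $yx=xy-z$ I get
\[
yx^{m}=x^{m}y-m\,x^{m-1}z .
\]
For \eqref{Wdefcase5ii} I proceed by induction on $n$, writing $y^{n+1}x^m=y\,(y^nx^m)$. Left-multiplying a PBW term by $y$ gives
\[
y\,x^{m-k}y^{n-k}z^k=x^{m-k}y^{n-k+1}z^k-(m-k)\,x^{m-k-1}y^{n-k}z^{k+1}.
\]
Collecting the coefficient of $x^{m-k}y^{n+1-k}z^k$ and using PBW uniqueness gives \eqref{Wreccase5ii}. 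The closed form \eqref{Wclosed_case5ii} then follows by induction. It amounts to the identity
\[
\binom{n+1}{k}m^{\underline k}=\binom{n}{k}m^{\underline k}+\binom{n}{k-1}(m-k+1)\,m^{\underline{k-1}},
\]
which holds because $(m-k+1)m^{\underline{k-1}}=m^{\underline k}$ together with Pascal's rule.

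Items (2)–(4). The $q$-commuting lemma from the proof of Proposition \ref{Case(1)}, with $q=1$, gives $(x^nz^m)^s=x^{ns}z^{ms}$ and $(y^nz^m)^s=y^{ns}z^{ms}$.

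For $(x^ny^m)^s$ I multiply \eqref{Udefcase5ii} on the right by $x^ny^m$. I normal order the middle factor $y^{ms-(\ell-k)}x^n$ using \eqref{Wdefcase5ii}, and move all $z$'s to the right by centrality. Grouping terms by the total loss $\ell$ gives \eqref{Ureccase5ii}.

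Centrality also gives $(x^ny^mz^t)^s=(x^ny^m)^sz^{ts}$, so $R^{(n,m,t)}_{s,\ell}=U^{(n,m)}_{s,\ell}$, which is item (4). In the same way, $(xyz)^s=(xy)^sz^s$. For item (3) I use the rule
\[
x^{a}y^{a}\cdot xy=x^{a+1}y^{a+1}-a\,x^{a}y^{a}z ,
\]
which comes from $y^ax=xy^a-a\,y^{a-1}z$. With $a=s-\ell$, this yields $V_{s+1,\ell}=V_{s,\ell}-(s+1-\ell)V_{s,\ell-1}$. Substituting $V_{s,\ell}=(-1)^\ell\left\{\begin{smallmatrix} s\\ s-\ell\end{smallmatrix}\right\}$ turns this recursion into the standard recurrence
\[
\left\{\begin{smallmatrix} s+1\\ r\end{smallmatrix}\right\}=\left\{\begin{smallmatrix} s\\ r-1\end{smallmatrix}\right\}+r\left\{\begin{smallmatrix} s\\ r\end{smallmatrix}\right\},\qquad r=s+1-\ell .
\]
Both sides agree at $s=1$, so \eqref{Vclosedcase5ii} follows by induction.

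Item (5). The binomial formulas for $(x+z)^n$ and $(y+z)^n$ are classical because $z$ is central. For $(x+y)^n$ I right-multiply \eqref{Edefcase5ii} by $x+y$. Right multiplication by $y$ simply raises the $y$-exponent. Right multiplication by $x$ gives
\[
x^iy^{j}z^k\,x=x^{i+1}y^jz^k-j\,x^iy^{j-1}z^{k+1}.
\]
Tracking the $y$-exponent $j=n-i-2k$ shows that the coefficient of $x^iy^{n+1-i-2k}z^k$ receives three contributions: $E_{n;i-1,k}$, then $E_{n;i,k}$, and finally $-(n+2-i-2k)E_{n;i,k-1}$ from the term with $j=n-i-2k+2$. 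This is exactly \eqref{Ereccase5ii}.

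I expect the closed form \eqref{Eclosedcase5ii} to be the main obstacle, since it requires checking a three-term identity. I would insert the formula into \eqref{Ereccase5ii}, factor out $(-1)^k n!/(i!(n+1-i-2k)!k!2^k)$, and reduce the identity to
\[
(n+1)=i+(n+1-i-2k)+2k ,
\]
which holds trivially. As a cross-check I would use the disentangling identity $e^{t(x+y)}=e^{tx}e^{ty}e^{-t^2z/2}$, valid because $[x,y]=z$ is central. Comparing coefficients of $t^n$ gives the same formula.
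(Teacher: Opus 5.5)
Your proposal is correct and follows essentially the same route as the paper. Centrality of $z$ reduces everything to the single rule $yx=xy-z$, each recursion comes from multiplying the expansion at one level by one more factor and collecting PBW coefficients, and the closed form for $V_{s,\ell}$ comes from the Stirling recurrence.

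The differences are in your favor. First, your split $y^{n+1}x^m=y\,(y^nx^m)$ yields \eqref{Wreccase5ii} directly; the paper's split $y^n(yx^m)$ naturally couples $W^{(m)}$ with $W^{(m-1)}$ and matches \eqref{Wreccase5ii} only once the closed form is known. Second, you actually verify the closed forms \eqref{Wclosed_case5ii} and \eqref{Eclosedcase5ii}, which the paper's proof only asserts.
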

\begin{proof}
\begin{enumerate}
\item [\rm (1)] The identities $z^n x^m=x^m z^n$ and $z^n y^m=y^m z^n$ follow by induction from $zx=xz$ and $zy=yz$. For $y^n x^m$, PBW gives the expansion \eqref{Wdefcase5ii}. To obtain the recursion, note first that
\begin{equation} \label{yxmcase5ii}
yx^m = (yx)x^{m-1} = (xy-z)x^{m-1} =  x(yx^{m-1})-x^{m-1}z, 
\end{equation}
and since $z$ is central, an induction on $m$ yields
\begin{equation}\label{yxmexplicitcase5ii}
yx^m=x^m y-m x^{m-1}z.
\end{equation}
Write $y^{n+1}x^m=y^n(yx^m)$ and substitute \eqref{yxmexplicitcase5ii}; collecting the coefficients of $x^{m-k}y^{(n+1)-k}z^k$ gives \eqref{Wreccase5ii}.

\item [\rm (2)] The formulas for $(x^n z^m)^s$ and $(y^n z^m)^s$ are immediate since $z$ commutes with $x$ and $y$. For \eqref{Udefcase5ii}, PBW gives the existence and uniqueness of the coefficients $U^{(n,m)}_{s,\ell}$.
Assume \eqref{Udefcase5ii} holds for some $s\ge 1$ and multiply by $x^n y^m$ on the right:
\[
(x^n y^m)^{s+1}=\sum_{\ell}x^{ns-\ell}U^{(n,m)}_{s,\ell} (y^{ms-\ell}x^n) y^m z^\ell.
\]
Normal order the subword $y^{ms-\ell}x^n$ using \eqref{Wdefcase5ii} with $m$ replaced by $n$:
\[
y^{ms-\ell}x^n=\sum_{k=0}^{\min\{n,ms-\ell\}}x^{n-k}W^{(n)}_{ms-\ell,k} y^{ms-\ell-k}z^k.
\]
Since $z$ is central, this yields exactly \eqref{Ureccase5ii} after reindexing $\ell\mapsto \ell-k$.

\item [\rm (3)] Since $z$ is central, $\left(xyz\right)^s = (xy)^s z^s$. We claim that
\begin{equation}\label{xypowerstirlingcase5ii}
(xy)^s=\sum_{\ell=0}^{s}x^{s-\ell}y^{s-\ell} V_{s,\ell} z^{\ell}.
\end{equation}
For $s=1$ this holds with $V_{1,0}=1$. Assume \eqref{xypowerstirlingcase5ii} for $s$ and multiply by $xy$ on the right:
\[
(xy)^{s+1}=\sum_{\ell}x^{s-\ell}y^{s-\ell}V_{s,\ell} z^\ell xy.
\]
Using that $z$ is central and the identity 
\begin{equation}\label{yrxcase5ii}
y^{r}x=x y^{r}-r y^{r-1}z \text{ for } r\ge 1,
\end{equation}
(proved by induction from $yx=xy-z$) with $r=s-\ell$, we obtain 
\begin{align*}
x^{s-\ell}\big(y^{s-\ell}x\big)y z^\ell
 &\ =x^{s-\ell}\big(xy^{s-\ell}-(s-\ell)y^{s-\ell-1}z\big)y z^\ell \\
&\ =x^{s+1-\ell}y^{s+1-\ell}z^\ell-(s-\ell)x^{s-\ell}y^{s-\ell}z^{\ell+1}.
\end{align*}
Collecting coefficients of $x^{(s+1)-\ell}y^{(s+1)-\ell}z^\ell$ yields \eqref{Vreccase5ii}.
Multiplying \eqref{xypowerstirlingcase5ii} by $z^s$ gives \eqref{Vdefcase5ii}.
Finally, \eqref{Vclosedcase5ii} follows from \eqref{Vreccase5ii} together with the standard Stirling recursion
$$
\left\{\begin{matrix}s+1\\r\end{matrix}\right\}
=r\left\{\begin{matrix}s\\r\end{matrix}\right\}
+\left\{\begin{matrix}s\\r-1\end{matrix}\right\}, 
$$ 
after the change $r = s-\ell$.

\item [\rm (4)] Since $z$ is central,
\[
(x^n y^m z^t)^s=(x^n y^m)^s z^{ts}.
\]
Substituting \eqref{Udefcase5ii} gives \eqref{Rdefcase5ii} with $R^{(n,m,t)}_{s,\ell}=U^{(n,m)}_{s,\ell}$, and then \eqref{Rreccase5ii} is exactly \eqref{Ureccase5ii}.

\item [\rm (5)] The expansions for $(x+z)^n$ and $(y+z)^n$ are the ordinary binomial theorem since $z$ commutes with $x$ and $y$. For \eqref{Edefcase5ii}, the PBW basis gives the existence/uniqueness of $E_{n;i,k}$. Multiply
\[
(x+y)^{n+1}=(x+y)^n x+(x+y)^n y
\]
and use \eqref{yrxcase5ii} to normal order the subword $y^{n-i-2k}x$ that occurs in the $x$-term:
\[
y^{n-i-2k}x=x y^{n-i-2k}-(n-i-2k) y^{n-i-2k-1}z.
\]
Collecting coefficients of $x^i y^{(n+1)-i-2k}z^k$ yields \eqref{Ereccase5ii}.
\end{enumerate}
\end{proof}

\subsubsection{Algebras of type (5)(iii)}

\begin{proposition}\label{Case(5)(iii)}
Let $A$ be the $\Bbbk$-algebra generated by $x,y,z$ with relations
\[
yz-zy=0,\ zx-xz=0 \quad {\rm and} \quad xy-yx=b,
\]
where $b\in\Bbbk$. Then the following hold.

\begin{enumerate}
\item [\rm (1)] For all $m,n\in\mathbb N$,
\[
z^n x^m = x^m z^n \quad {\rm and} \quad z^n y^m = y^m z^n.
\]
Moreover, for all $m,n\in\mathbb N$ there exist uniquely determined scalars
$W^{(m)}_{n,k}\in\Bbbk$, for $0\le k\le \min\{m,n\}$, such that
\begin{equation}\label{Wdefcase5iii}
y^n x^m=\sum_{k=0}^{\min\{m,n\}} x^{m-k}W^{(m)}_{n,k} y^{n-k}.
\end{equation}
These are characterized by
\[
W^{(m)}_{0,0}=1,\ W^{(m)}_{0,k}=0\ \text{ for } k\ge 1,
\]
and the explicit recursion {\rm (}with $W^{(m)}_{n,-1}:=0${\rm )}, 
\begin{equation}\label{Wreccase5iii}
W^{(m)}_{n+1,k}=W^{(m)}_{n,k}-b (m-k+1) W^{(m)}_{n,k-1},
\qquad n\ge 0,\ k\ge 0.
\end{equation}
In fact, we have the closed form
\begin{equation}\label{Wclosedcase5iii}
W^{(m)}_{n,k}=(-1)^k\binom{n}{k} b^{k} m^{\underline{k}}.
\end{equation}

\item [\rm (2)] For all $m,n,s\in\mathbb N$,
\[
(x^n z^m)^s=x^{ns}z^{ms},\ (y^n z^m)^s=y^{ns}z^{ms}.
\]
Moreover, $(x^n y^m)^s$ admits the PBW expansion
\begin{equation}\label{Udefcase5iii}
(x^n y^m)^s=\sum_{\ell=0}^{\min\{ns,ms\}} x^{ns-\ell} U^{(n,m)}_{s,\ell} y^{ms-\ell}, \ U^{(n,m)}_{s,\ell}\in\Bbbk,
\end{equation}
with $U^{(n,m)}_{1,0}=1$ and $U^{(n,m)}_{1,\ell}=0$ for $\ell\ge 1$, and the explicit recursion
\begin{equation}\label{Ureccase5iii}
U^{(n,m)}_{s+1,\ell}=\sum_{k=0}^{\min\{n,\ell\}}
U^{(n,m)}_{s,\ell-k} W^{(n)}_{ms-(\ell-k), k}, \text{ for }  s\ge 1, \ell\ge 0,
\end{equation}
where $W^{(n)}_{r,k}$ are the normal ordering coefficients from \eqref{Wdefcase5iii} {\rm (}with $m$ replaced by $n${\rm )}. Equivalently, using \eqref{Wclosedcase5iii},
\begin{equation}\label{Urecclosedcase5iii}
U^{(n,m)}_{s+1,\ell}=\sum_{k=0}^{\min\{n,\ell\}}(-1)^k\binom{ms-(\ell-k)}{k} b^{k} n^{\underline{k}} U^{(n,m)}_{s,\ell-k}.
\end{equation}

\item [\rm (3)] Since $b$ is central and $z$ commutes with $x,y$, one has
\[
(xyz)^s=(xy)^s z^s.
\]
There exist uniquely determined scalars $V_{s,\ell}\in\Bbbk$, for $0\le \ell\le s$, such that
\begin{equation}\label{Vdefcase5iii}
(xyz)^s=\sum_{\ell=0}^{s} x^{ s-\ell} y^{ s-\ell} V_{s,\ell} z^{s}.
\end{equation}
These satisfy $V_{1,0}=1$, $V_{1,\ell}=0$ for $\ell\ge 1$, and the explicit recursion (with $V_{s,-1}:=0$)
\begin{equation}\label{Vreccase5iii}
V_{s+1,\ell}=V_{s,\ell}-b (s+1-\ell) V_{s,\ell-1}, \text{ for } s\ge 1,\ \ell\ge 0.
\end{equation}
Equivalently, 
\begin{equation}\label{Vclosedcase5iii}
V_{s,\ell}=(-1)^{\ell}\binom{s}{\ell} b^{\ell} (s-\ell)!.
\end{equation}

\item [\rm (4)] For all $n,m,t,s\in\mathbb N$, there exist uniquely determined scalars $R^{(n,m,t)}_{s,\ell}\in\Bbbk$,
for $0\le \ell\le \min\{ns,ms\}$, such that
\begin{equation}\label{Rdefcase5iii}
(x^n y^m z^t)^s=\sum_{\ell=0}^{\min\{ns,ms\}} x^{ns-\ell} R^{(n,m,t)}_{s,\ell} y^{ms-\ell}z^{ts}.
\end{equation}
These satisfy $R^{(n,m,t)}_{1,0}=1$ and $R^{(n,m,t)}_{1,\ell}=0$ for $\ell\ge 1$, and the explicit recursion
\begin{equation}\label{Rreccase5iii}
R^{(n,m,t)}_{s+1,\ell}=\sum_{k=0}^{\min\{n,\ell\}}R^{(n,m,t)}_{s,\ell-k} W^{(n)}_{ms-(\ell-k), k}, \text{ for } s\ge 1,\ \ell\ge 0,
\end{equation}
i.e., $R^{(n,m,t)}_{s,\ell}=U^{(n,m)}_{s,\ell}$ for all $s, l$.

\item [\rm (5)] Since $z$ is central and commutes with $x$ and $y$, then  
\[
(x+z)^n=\sum_{k=0}^{n}\binom{n}{k}x^{n-k}z^{k},\ (y+z)^n=\sum_{k=0}^{n}\binom{n}{k}y^{n-k}z^{k}.
\]
Finally, $(x+y)^n$ admits a unique PBW expansion given by 
\begin{equation}\label{Edefcase5iii}
(x+y)^n=\sum_{i=0}^{n} x^{i}E_{n;i} y^{ n-i}, \ E_{n;i}\in\Bbbk,
\end{equation}
with $E_{0;0}=1$ and $E_{0;i}=0$ for $i\neq 0$, and the explicit recursion
\begin{equation}\label{Ereccase5iii}
E_{n+1;i}=E_{n;i-1}+E_{n;i}-b (n+1-i) E_{n;i},
\end{equation}
where $E_{n;i}=0$ if $i<0$ or $i>n$. Equivalently, we have the closed form
\begin{equation}\label{Eclosedcase5iii}
E_{n;i}=\binom{n}{i} ( -b)^{ n-i} (n-i)!.
\end{equation}
\end{enumerate}
\end{proposition}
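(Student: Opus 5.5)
The plan is to reduce every item to the single commutation rule $yx=xy-b$, with $z$ and $b$ central, and to argue throughout as in the proof of Proposition~\ref{Case(5)(ii)}. Item (5)(ii) is formally the same algebra with $z$ replaced by the scalar $b$. There is one real difference: here the correction term has no $z$ to absorb degree, so total degree drops. I will keep track of this, because it affects the shape of some of the claimed expansions.

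\emph{Items (1) and (2).} First, $z^nx^m=x^mz^n$ and $z^ny^m=y^mz^n$ follow by trivial induction. Next, induction on $m$ gives
\[
yx^m=x^my-mb\,x^{m-1}.
\]
Substituting this into $y^{n+1}x^m=y^n(yx^m)$, expanding $y^nx^{m}$ and $y^nx^{m-1}$ by \eqref{Wdefcase5iii}, and comparing PBW coefficients gives \eqref{Wreccase5iii}. The closed form \eqref{Wclosedcase5iii} then follows by induction on $n$ using Pascal's rule together with $(m-k+1)\,m^{\underline{k-1}}=m^{\underline{k}}$. In (2), the $xz$- and $yz$-powers are immediate from centrality of $z$. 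For \eqref{Ureccase5iii}, I multiply \eqref{Udefcase5iii} on the right by $x^ny^m$ and normal order the middle subword $y^{ms-\ell}x^n$ using \eqref{Wdefcase5iii}. I then reindex the total loss $\ell\mapsto\ell+k$ and finally substitute \eqref{Wclosedcase5iii}. Item (4) reduces to (2) because $(x^ny^mz^t)^s=(x^ny^m)^sz^{ts}$.

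\emph{Item (3).} Write $(xyz)^s=(xy)^sz^s$ and use $y^rx=xy^r-rb\,y^{r-1}$ exactly as in the proof of \eqref{Vreccase5ii}; this yields \eqref{Vreccase5iii}. The closed form is the step I expect to be the \textbf{main obstacle}, because I will solve the recursion and must check it against small cases. For example,
\[
(xy)^2=x(xy-b)y=x^2y^2-b\,xy ,
\]
so $V_{2,1}=-b$. This agrees with the recursion but not with \eqref{Vclosedcase5iii} as printed, which gives $-2b$. The recursion is the Stirling recursion scaled by $(-b)^\ell$, so I expect the correct closed form to be
\[
V_{s,\ell}=(-b)^{\ell}\left\{\!\!\begin{matrix} s\\ s-\ell\end{matrix}\!\!\right\},
\]
mirroring \eqref{Vclosedcase5ii}. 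I would prove this version by induction with $r=s-\ell$, and I would flag the printed formula for correction.

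\emph{Item (5).} The $xz$ and $yz$ binomials are classical. For $(x+y)^n$, the same kind of check shows
\[
(x+y)^2=x^2+2xy+y^2-b .
\]
This has a degree-$0$ term, so an ansatz $\sum_i x^iE_{n;i}y^{n-i}$ with fixed total degree $n$ cannot hold. I would therefore replace it by the expansion $\sum_{i+2k\le n}x^iE_{n;i,k}y^{n-i-2k}$, in analogy with \eqref{Edefcase5ii} but with $z^k$ replaced by the scalar $b^k$. I would then derive the recursion from $(x+y)^{n+1}=(x+y)^nx+(x+y)^ny$ together with the rule for $y^rx$ above. Finally, I would verify the closed form $(-b)^k\,n!/(i!\,(n-i-2k)!\,k!\,2^k)$ by induction, using the same bookkeeping as in \eqref{Eclosedcase5ii}.
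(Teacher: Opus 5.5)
Your approach to items (1), (2), (4) and to the recursion \eqref{Vreccase5iii} is the paper's. You reduce everything to $yx=xy-b$ with $z$ central, normal order the middle subword $y^{ms-\ell}x^n$ via \eqref{Wdefcase5iii}, and use $(xyz)^s=(xy)^sz^s$ and $(x^ny^mz^t)^s=(x^ny^m)^sz^{ts}$.

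Both of your objections are correct. In both places the paper's own proof does not establish what is printed.
\begin{itemize}
\item[(3)] The closed form \eqref{Vclosedcase5iii} gives $V_{2,0}=2$ for every $b$. It also gives $V_{1,1}=-b$, which contradicts the stated initial data. The actual solution of \eqref{Vreccase5iii} is $(-b)^{\ell}\left\{\!\!\begin{matrix} s\\ s-\ell\end{matrix}\!\!\right\}$, as you say; for instance $(xy)^3=x^3y^3-3b\,x^2y^2+b^2\,xy$. The paper justifies \eqref{Vclosedcase5iii} by $e^{t(x+y)}=e^{tx}e^{ty}e^{-bt^2/2}$, which concerns $(x+y)^n$, not $(xy)^s$.
\item[(5)] In deriving \eqref{Ereccase5iii}, the paper files the term $-b(n-i)\,x^iy^{n-i-1}$, of total degree $n-1$, under the monomial $x^iy^{n+1-i}$. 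As a result \eqref{Ereccase5iii} and \eqref{Eclosedcase5iii} already disagree with each other at $n=1$: they give $E_{1;0}=1-b$ and $E_{1;0}=-b$ respectively, while the true value is $1$.
\end{itemize}

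Your corrected version of (5) is also right. Write $j=n-i-2k$. The recursion you will obtain is
\[
E_{n+1;i,k}=E_{n;i-1,k}+E_{n;i,k}-b\,(n+2-i-2k)\,E_{n;i,k-1}.
\]
The closed form $(-b)^k n!/(i!\,j!\,k!\,2^k)$ satisfies it: relative to the common factor $(-b)^kn!/(i!\,(j+1)!\,k!\,2^k)$, the three terms contribute $i$, $j+1$ and $2k$, and $i+(j+1)+2k=n+1$.

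Two shortcuts are available.
\begin{itemize}
\item The exponential identity the paper misapplies in (3) is a one-line proof of the corrected (5). Comparing coefficients of $t^n$ gives $(x+y)^n=\sum_{i+j+2k=n}\frac{n!}{i!\,j!\,k!\,2^k}(-b)^kx^iy^j$.
\item Your analogy with (5)(ii) can be made rigorous. The assignment $x\mapsto x$, $y\mapsto y$, $z\mapsto b$ defines an algebra homomorphism from the algebra of Proposition \ref{Case(5)(ii)} to this one, sending $x^iy^jz^k$ to $b^kx^iy^j$. In \eqref{xypowerstirlingcase5ii} and \eqref{Edefcase5ii} the exponent $k$ is determined by $(i,j)$. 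So those expansions transport directly to the corrected (3) and (5).
\end{itemize}

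One small slip in (1), which the paper shares. Expanding $y^n(yx^m)$ gives $W^{(m)}_{n+1,k}=W^{(m)}_{n,k}-mb\,W^{(m-1)}_{n,k-1}$, which mixes $m$ and $m-1$; it does not give \eqref{Wreccase5iii}. To get the stated recursion directly, expand $y(y^nx^m)$ and use $yx^{m-k}=x^{m-k}y-(m-k)b\,x^{m-k-1}$. Alternatively, note that the two recursions agree once the closed form is known, since $m\,(m-1)^{\underline{k-1}}=(m-k+1)\,m^{\underline{k-1}}=m^{\underline{k}}$.
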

\begin{proof}
\begin{enumerate}
\item [\rm (1)] The commutations with $z$ are immediate. For $y^n x^m$, first prove by induction on $m$ that
\begin{equation}\label{yxmcase5iii}
yx^m=x^m y-bm x^{m-1}.
\end{equation}
Indeed, $yx = (xy-b) = xy - b$, and if \eqref{yxmcase5iii} holds for $m-1$, then
\begin{align*}
yx^m = &\, (yx)x^{m-1}=(xy-b)x^{m-1} \\
= &\, x(yx^{m-1})-b x^{m-1} \\
= &\, x(x^{m-1}y-b(m-1)x^{m-2})-b x^{m-1},
\end{align*}
which gives \eqref{yxmcase5iii}. Now write $y^{n+1}x^m=y^n(yx^m)$ and substitute \eqref{yxmcase5iii}.
Collecting coefficients of $x^{m-k}y^{(n+1)-k}$ yields \eqref{Wreccase5iii}. The closed form \eqref{Wclosedcase5iii} follows by induction on $n$.

\item [\rm (2)] The statements for $(x^n z^m)^s$ and $(y^n z^m)^s$ are immediate since $z$ commutes with $x,y$. For \eqref{Udefcase5iii}, PBW gives existence of $U^{(n,m)}_{s,\ell}$.
Assume \eqref{Udefcase5iii} for $s$ and multiply by $x^n y^m$ on the right:
\[
(x^n y^m)^{s+1}=\sum_{\ell}x^{ns-\ell}U^{(n,m)}_{s,\ell} (y^{ms-\ell}x^n) y^m.
\]
Normal order the subword $y^{ms-\ell}x^n$ using \eqref{Wdefcase5iii} with $m$ replaced by $n$, and collect coefficients, which yields \eqref{Ureccase5iii} (and then \eqref{Urecclosedcase5iii}).

\item [\rm (3)] Since $zx = xz$ and $zy = yz$, we have that $(xyz)^s=(xy)^s z^s$.
Write 
$$
(xy)^s=\sum_{\ell=0}^s x^{s-\ell}y^{s-\ell}V_{s,\ell}
$$ 
and proceed as in the proof of (5)(ii), but using $y^{r}x=x y^{r}-br y^{r-1}$ (the analogue of \eqref{yxmcase5iii} with $m=1$).
This gives \eqref{Vreccase5iii}. The closed form \eqref{Vclosedcase5iii} follows by the identity $e^{t(x+y)}=e^{tx}e^{ty}e^{-bt^2/2}$ since $[x,y] = b$ is central.

\item [\rm (4)] Since $z$ is central,
\[
\left(x^n y^m z^t\right)^s = \left(x^n y^m\right)^s z^{ts},
\]
and we get \eqref{Rdefcase5iii} with $R^{(n,m,t)}_{s,\ell}=U^{(n,m)}_{s,\ell}$, and so the recursion matches \eqref{Ureccase5iii}.

\item [\rm (5)] The binomial formulas with $z$ are classical. For $(x+y)^n$, define $E_{n;i}$ by \eqref{Edefcase5iii}.
Then
\[
(x+y)^{n+1}=(x+y)^n x+(x+y)^n y,
\]
and normal order the $x$-term using 
$$
y^{n-i}x=x y^{n-i}-b(n-i)y^{n-i-1}.
$$
Collecting coefficients gives \eqref{Ereccase5iii}, and \eqref{Eclosedcase5iii} follows by induction on $n$.
\end{enumerate}
\end{proof}

\subsubsection{Algebras of type (5)(vi)}

\begin{proposition}\label{Case(5)(iv)}
Let $A$ be the $\Bbbk$-algebra generated by $x,y,z$ with relations
\[
yz - zy = -y,\quad zx - xz = x+y \quad {\rm and} \quad xy - yx = 0.
\]
Then the following hold.

\begin{enumerate}
\item [\rm (1)] For all $m,n\in\mathbb N$, $y^n x^m=x^m y^n$. Moreover, for all $m,n\in\mathbb N$,
\begin{equation}\label{znymcase5ivclosed}
z^n y^m = y^m\left(z+m\right)^n = \sum_{k=0}^{n}\binom{n}{k}m^k y^m z^{ n-k}.
\end{equation}
Finally, for all $m,n\in\mathbb N$ there exist uniquely elements $W^{(m)}_{n;k,j}\in\Bbbk$, for $0\le k\le n$ and $0\le j\le \min\{m,k\}$, such that
\begin{equation}\label{znxmcase5ivWdef}
z^n x^m = \sum_{k=0}^{n}\ \sum_{j=0}^{\min\{m,k\}} x^{m-j}y^{j} W^{(m)}_{n;k,j} z^{ n-k}.
\end{equation}
These elements are characterized as
\[
W^{(m)}_{0;0,0} = 1 \quad {\rm and} \quad W^{(m)}_{0;k,j} = 0 \quad {\rm for}\, \, (k,j)\neq(0,0),
\]
and the explicit recursion {\rm (}with the conventions $W^{(m)}_{n;-1,j}=W^{(m)}_{n;k,-1}=0${\rm )}
\begin{align}
W^{(m)}_{n+1;k,j} = &\, W^{(m)}_{n;k,j}+m W^{(m)}_{n;k-1,j} \\
&\, +(m-j+1) W^{(m)}_{n;k-1,j-1}, \quad {\rm for} \, \, n, k, j \ge 0. \label{Wreccase5iv}
\end{align}

\item [\rm (2)] For all $m,n,s\in\mathbb N$, $\left(x^n y^m\right)^s = x^{ns}y^{ms}$. There exist unique elements $\widehat U^{(n,m)}_{s,\ell}\in\Bbbk$ {\rm (}$0\le \ell\le ms${\rm )} such that
\begin{equation}\label{Uhatdefcase5iv}
(y^n z^m)^s= y^{ns}\sum_{\ell=0}^{ms}\widehat U^{(n,m)}_{s,\ell} z^{ ms-\ell}.
\end{equation}
These satisfy $\widehat U^{(n,m)}_{1,0}=1$ and $\widehat U^{(n,m)}_{1,\ell}=0$ for $\ell\ge 1$, and the explicit recursion
\begin{equation}\label{Uhatreccase5iv}
\widehat U^{(n,m)}_{s+1,\ell} =\sum_{r=0}^{\min\{m,\ell\}}
\binom{m}{r}(ns)^{r} \widehat U^{(n,m)}_{s,\ell-r}, \text{ for } s\ge 1,\ \ell\ge 0.
\end{equation}
Equivalently, we have the closed product form
\begin{equation}\label{Uhatproductcase5iv}
\left(y^n z^m\right)^s = y^{ns}\prod_{r=0}^{s-1}(z+rn)^{m}.
\end{equation}
There exist unique scalars $\widetilde U^{(n,m)}_{s;\ell,j}\in\Bbbk$,
$0\le \ell\le ms$ and $0\le j\le \min\{ns,\ell\}$, such that
\begin{equation}\label{Utildedefcase5iv}
(x^n z^m)^s=\sum_{\ell=0}^{ms}\ \sum_{j=0}^{\min\{ns,\ell\}}
x^{ns-j}y^{j} \widetilde U^{(n,m)}_{s;\ell,j} z^{ ms-\ell}.
\end{equation}
These satisfy $\widetilde U^{(n,m)}_{1;0,0}=1$ and $\widetilde U^{(n,m)}_{1;\ell,j}=0$ for $(\ell,j)\neq(0,0)$, and the explicit recursion
\begin{equation}\label{Utildereccase5iv}
\widetilde U^{(n,m)}_{s+1;\ell,j} = \sum_{k=0}^{\min\{n,\ell\}}
\ \sum_{q=0}^{\min\{k,j\}} \widetilde U^{(n,m)}_{s;\ell-k, j-q} 
W^{(n)}_{ ms-(\ell-k);\ k,q}, \text{ for } s\ge 1,\ \ell\ge 0,\ j\ge 0,
\end{equation}
where $W^{(n)}_{r;k,q}$ are the normal ordering coefficients from \eqref{znxmcase5ivWdef} {\rm (}with $m$ replaced by $n${\rm )}.

\item [\rm (3)] There exist unique coefficients $V_s(i,j,k)\in\Bbbk$ such that
\begin{equation}\label{Vdefcase5iv}
(xyz)^s=\sum_{i,j,k\ge 0}V_s(i,j,k) x^i y^j z^k, \ V_1(i,j,k)=\delta_{i,1}\delta_{j,1}\delta_{k,1}.
\end{equation}
Define auxiliary arrays $V^{[x]}_s$ and $V^{[xy]}_s$ by the explicit coefficient updates
\begin{align}
V^{[x]}_s(i,j,k) = &\, \sum_{c\ge k}\binom{c}{k} V_s(i-1,j,c) + \sum_{c\ge k+1}c\binom{c-1}{k} V_s(i,j-1,c) \quad {\rm and}  \label{Vmultxcase5iv} \\
V^{[xy]}_s(i,j,k) = &\, \sum_{c\ge k}\binom{c}{k} V^{[x]}_s(i,j-1,c), \label{Vmultycase5iv}
\end{align}
and then
\begin{equation}\label{Vreccase5iv}
V_{s+1}(i,j,k)=V^{[xy]}_s(i,j,k-1), \quad {\rm for}\, \, s\ge 1,
\end{equation}
with the convention $V^{[xy]}_s(i,j,-1)=0$.

\item [\rm (4)] For all $n,m,t,s\in\mathbb N$ there exist unique scalars $R^{(n,m,t)}_{s;\ell,j}\in\Bbbk$,
$0\le \ell\le ts$ and $0\le j\le \min\{ns,\ell\}$, such that
\begin{equation}\label{Rdefcase5iv}
(x^n y^m z^t)^s=\sum_{\ell=0}^{ts}\ \sum_{j=0}^{\min\{ns,\ell\}}x^{ns-j}y^{ms+j} R^{(n,m,t)}_{s;\ell,j} z^{ ts-\ell}.
\end{equation}
These satisfy $R^{(n,m,t)}_{1;0,0}=1$ and $R^{(n,m,t)}_{1;\ell,j}=0$ for $(\ell,j)\neq(0,0)$, and the explicit recursion
\begin{equation}\label{Rreccase5iv}
R^{(n,m,t)}_{s+1;\ell,j}=\sum_{k=0}^{\min\{n,\ell\}} \sum_{q=0}^{\min\{k,j\}}
R^{(n,m,t)}_{s;\ell-k, j-q} \mathcal W^{(n,m)}_{ ts-(\ell-k);\ k,q},
\end{equation}
for $s\ge 1$, where $\mathcal W^{(n,m)}_{r;k,q}$ are the normal ordering coefficients defined by
\begin{equation}\label{Wnmdefcase5iv}
z^{r}x^{n}y^{m}=\sum_{k=0}^{r}\ \sum_{q=0}^{\min\{n,k\}}x^{n-q}y^{m+q} \mathcal W^{(n,m)}_{r;\ k,q} z^{ r-k},\quad \mathcal W^{(n,m)}_{0;0,0}=1,
\end{equation}
and satisfy the explicit recursion
\begin{equation}\label{Wnmreccase5iv}
\mathcal W^{(n,m)}_{r+1;\ k,q}=\mathcal W^{(n,m)}_{r;\ k,q}+(n+m) \mathcal W^{(n,m)}_{r;\ k-1,q}+(n-q+1) \mathcal W^{(n,m)}_{r;\ k-1,q-1}.
\end{equation}

\item [\rm (5)] Since $xy = yx$, the classical binomial theorem gives
\[
(x+y)^n=\sum_{k=0}^{n}\binom{n}{k}x^{n-k}y^{k}.
\]
There exist unique scalars $E^{\langle yz\rangle}_{n;i,k}\in\Bbbk$ {\rm (}$i,k\ge 0$, $i+k\le n${\rm )} such that
\begin{equation}\label{Eyzdefcase5iv}
\left(y + z\right)^n = \sum_{\substack{i,k\ge 0\\ i+k\le n}} y^{i} E^{\langle yz\rangle}_{n;i,k} z^{k},
\end{equation}
with $E^{\langle yz\rangle}_{0;0,0}=1$ and $E^{\langle yz\rangle}_{0;i,k}=0$ for $(i,k)\neq(0,0)$, and the explicit recursion
\begin{equation}\label{Eyzreccase5iv}
E^{\langle yz\rangle}_{n+1;i,k}=E^{\langle yz\rangle}_{n;i,k-1}+\sum_{c\ge k}\binom{c}{k} E^{\langle yz\rangle}_{n;i-1,c},
\end{equation}
where $E^{\langle yz\rangle}_{n;i,k}=0$ if $i<0$ or $k<0$ or $i+k>n$. There exist unique elements $E^{\langle xz\rangle}_{n;i,j,k}\in\Bbbk$ such that
\begin{equation}\label{Exzdefcase5iv}
\left(x + z\right)^n = \sum_{\substack{i,j,k\ge 0\\ i+j+k\le n}} x^{i}y^{j} E^{\langle xz\rangle}_{n;i,j,k} z^{k},
\end{equation}
with $E^{\langle xz\rangle}_{0;0,0,0}=1$ and $E^{\langle xz\rangle}_{0;i,j,k}=0$ for $(i,j,k)\neq(0,0,0)$, and the explicit recursion given by
\begin{equation}\label{Exzreccase5iv}
E^{\langle xz\rangle}_{n+1;i,j,k}=E^{\langle xz\rangle}_{n;i,j,k-1}
+\sum_{c\ge k}\binom{c}{k} E^{\langle xz\rangle}_{n;i-1,j,c}
+\sum_{c\ge k+1}c\binom{c-1}{k} E^{\langle xz\rangle}_{n;i,j-1,c},
\end{equation}
with $E^{\langle xz\rangle}_{n;i,j,k}=0$ if any index is negative or $i+j+k>n$.
\end{enumerate}
\end{proposition}
\begin{proof}
\begin{enumerate}
\item [\rm (1)] Since $xy = yx$, then $y^n x^m = x^m y^n$. From $zy = yz+y$ a reasoning by induction on $m$ shows that $z y^m=y^m z+m y^m$, and so 
$z^n y^m=y^m(z+m)^n$, giving \eqref{znymcase5ivclosed}.

For $z^n x^m$, define $W^{(m)}_{n;k,j}$ by \eqref{znxmcase5ivWdef}. Using that
$$
zf = fz + [z,f], \quad [z, x] = x+y \quad {\rm and} \quad [z, y ] = y,
$$
a direct induction on $m$ yields
\[
[z,x^{m-j}y^j]=m x^{m-j}y^j+(m-j) x^{m-j-1}y^{j+1}.
\]
Hence, 
\begin{align*}
z\left(x^{m-j}y^j z^{n-k}\right) = &\, x^{m-j}y^j z^{n-k+1}+m x^{m-j}y^j z^{n-k} \\
&\, +(m-j) x^{m-j-1}y^{j+1}z^{n-k}.
\end{align*}
Plugging this into $z^{n+1}x^m=z(z^n x^m)$ and collecting the coefficients of $x^{m-j}y^j z^{(n+1)-k}$, we get \eqref{Wreccase5iv}.

\item [\rm (2)] Using that $z^m y^n = y^n(z+n)^m$, then 
\begin{align*}
(y^n z^m)^{s+1} = &\, (y^n z^m)^s y^n z^m \\  
= &\, y^{ns}\Big(\prod_{r=0}^{s-1}(z+rn)^m\Big)y^n z^m \\
= &\, y^{n(s+1)}\Big(\prod_{r=0}^{s}(z+rn)^m\Big),
\end{align*}
which is \eqref{Uhatproductcase5iv}. Expanding $(z+ns)^m = \sum_{r=0}^{m}\binom{m}{r}(ns)^r z^{m-r}$ and collecting coefficients in \eqref{Uhatdefcase5iv} gives \eqref{Uhatreccase5iv}.

Assume \eqref{Utildedefcase5iv} for some $s\ge 1$ and multiply by $x^n z^m$ on the right. Normal order each subword $z^{ms-(\ell-k)}x^n$ using \eqref{znxmcase5ivWdef} with $m$ replaced by $n$, ans use that $x,y$ commute to add exponents. Collecting the coefficient of $x^{n(s+1)-j}y^{j}z^{m(s+1)-\ell}$ yields \eqref{Utildereccase5iv}.

\item [\rm (3)] Write \eqref{Vdefcase5iv} for $s$ and multiply by $xyz$ on the right.
Right multiplication by $x$ is governed by the identities
\begin{align*}
z^{c}x = &\, \sum_{u=0}^{c}\binom{c}{u}xz^{c-u}+ c\sum_{u=0}^{c-1}\binom{c-1}{u}yz^{c-1-u} \quad {\rm and} \\
z^{c}y = &\, \sum_{u=0}^{c}\binom{c}{u}yz^{c-u}, 
\end{align*}
whence \eqref{Vmultxcase5iv} and \eqref{Vmultycase5iv} follow by collecting coefficients. Multiplying by $z$ shifts the $z$-exponent by $+1$, giving \eqref{Vreccase5iv}.

\item [\rm (4)] The definition \eqref{Wnmdefcase5iv} is the PBW uniqueness. The recursion \eqref{Wnmreccase5iv} follows exactly as in \textup{(1)} by applying $z$ to $x^{n-q}y^{m+q}$.
Then multiply \eqref{Rdefcase5iv} by $x^n y^m z^t$ on the right and normal order $z^{ts-(\ell-k)}x^n y^m$ via \eqref{Wnmdefcase5iv}; collecting coefficients yields \eqref{Rreccase5iv}.

\item [\rm (5)] The binomial formula for $(x+y)^n$ is classical since $x,y$ commute. For $(y + z)^n$, using 
$$
(y+z)^{n+1} = (y+z)^n y+(y+z)^n z
$$ 
and the property
$$
z^{c}y = \sum_{u=0}^{c}\binom{c}{u}yz^{c-u}
$$ 
to collect coefficients, it follows \eqref{Eyzreccase5iv}.
For $(x+z)^n$, we proceed similarly but use the straightening formula for $z^{c}x$ displayed above, producing the two sums in \eqref{Exzreccase5iv}.
\end{enumerate}
\end{proof}

\subsubsection{Algebras of type (5)(v)}

\begin{proposition}\label{Case(5)(v)}
Let $A$ be the $\Bbbk$-algebra generated by $x,y,z$ with relations
\[
yz - zy= az,\ zx - xz = z \quad {\rm and} \quad xy - yx = 0, \quad {\rm where} \ a\in\Bbbk^{\times}. 
\]
For $s\in\mathbb N$, set the rising factorial
\[
(x)^{\overline{s}}:=\prod_{r=0}^{s-1}(x+r)\in\Bbbk[x],
\]
and the $a$-step falling factorial
\[
\left(y\right)^{\underline{s}}_{a}:=\prod_{r=0}^{s-1}(y-ra)\in\Bbbk[y], \quad {\rm with} \, \, (x)^{\overline{0}}=(y)^{\underline{0}}_{a}:=1.
\]
Then the following assertions hold:
\begin{enumerate}
\item[\rm (1)] For all $m,n\in\mathbb N$, $y^n x^m=x^m y^n$. Moreover,
\begin{equation}\label{znxmcase5v}
z^n x^m=(x+n)^m z^n=\sum_{k=0}^{m}\binom{m}{k} n^{k} x^{m-k} z^n,
\end{equation}
and
\begin{equation}\label{znymcase5v}
z^n y^m = \left(y - na\right)^m z^n = \sum_{k=0}^{m}\binom{m}{k} (-na)^{k} y^{m-k} z^n.
\end{equation}
Equivalently, the coefficients
\[
W^{(m)}_{n,k}:=\binom{m}{k}n^{k} \quad {\rm and} \quad \widehat W^{(m)}_{n,k}:=\binom{m}{k}(-na)^{k}
\]
are the unique scalars such that
$$
z^n x^m = \sum_{k=0}^{m}x^{m-k}W^{(m)}_{n,k}z^n \quad {\rm and} \quad z^n y^m = \sum_{k=0}^{m}y^{m-k}\widehat W^{(m)}_{n,k}z^n.
$$

\item[\rm (2)] For all $m,n,s\in\mathbb N$, $\left(x^n y^m\right)^s = x^{ns}y^{ms}$. Moreover, we get the product forms
\begin{equation}\label{xnzmpowercase5v}
\left(x^n z^m\right)^s = \left(\prod_{r=0}^{s-1}(x+rm)^n\right) z^{ms} \quad {\rm and} \quad (y^n z^m)^s = \left(\prod_{r=0}^{s-1}(y-rma)^n\right) z^{ms}.
\end{equation}
Equivalently, there exist uniquely determined scalars
$U^{(n,m)}_{s,\ell},\ \widehat U^{(n,m)}_{s,\ell}\in\Bbbk$ such that
\begin{align}
\left(x^n z^m\right)^s & = \sum_{\ell=0}^{ns}x^{ns-\ell} U^{(n,m)}_{s,\ell} z^{ms},\label{Udefcase5v}\\
\left(y^n z^m\right)^s & = \sum_{\ell=0}^{ns}y^{ns-\ell} \widehat U^{(n,m)}_{s,\ell} z^{ms},\label{Uhatdefcase5v}
\end{align}
with $U^{(n,m)}_{1,0}=\widehat U^{(n,m)}_{1,0}=1$ and $U^{(n,m)}_{1,\ell}=\widehat U^{(n,m)}_{1,\ell}=0$ for $\ell\ge 1$, and the explicit recursions
\begin{align}
U^{(n,m)}_{s+1,\ell}
&=\sum_{k=0}^{\min\{n,\ell\}}\binom{n}{k}(sm)^{k} U^{(n,m)}_{s,\ell-k}, \text{ for } s\ge 1,\ \ell\ge 0,\label{Ureccase5v}\\
\widehat U^{(n,m)}_{s+1,\ell}
&=\sum_{k=0}^{\min\{n,\ell\}}\binom{n}{k}(-sma)^{k} \widehat U^{(n,m)}_{s,\ell-k}, \text{ for } s\ge 1,\ \ell\ge 0.\label{Uhatreccase5v}
\end{align}

\item[\rm (3)] One has the closed form
\begin{equation}\label{xyzpowercase5v}
\left(xyz\right)^s = (x)^{\overline{s}}(y)^{\underline{s}}_{a} z^{s}.
\end{equation}
Equivalently, there exist uniquely determined scalars $V_{s;i,j}\in\Bbbk$, for $0\le i,j\le s$ such that
\begin{equation}\label{Vdefcase5v}
\left(xyz\right)^s=\sum_{i=0}^{s}\ \sum_{j=0}^{s} x^{i}y^{j} V_{s;i,j} z^{s},
\end{equation}
with $V_{1;1,1}=1$ and $V_{1;i,j}=0$ for $(i,j)\neq(1,1)$, and the explicit recursion
\begin{equation}\label{Vreccase5v}
V_{s+1;i,j}=V_{s;i-1,j-1}+s V_{s;i-1,j}-sa V_{s;i,j-1}-s^{2}a V_{s;i,j}, \text{ for } s\ge 1,
\end{equation}
where $V_{s;i,j}=0$ if $i<0$ or $j<0$ or $i>s$ or $j>s$.

\item[\rm (4)] For all $n,m,t,s\in\mathbb N$, we have the closed product form given by
\begin{equation}\label{generalblockpowercase5v}
\left(x^n y^m z^t\right)^s = \left(\prod_{r=0}^{s-1}(x+rt)^{n}(y-rta)^{m}\right) z^{ts}.
\end{equation}

That is, there exist uniquely determined elements $R^{(n,m,t)}_{s;p,q}\in\Bbbk$ such that
\begin{equation}\label{Rdefcase5v}
\left(x^n y^m z^t\right)^s = \sum_{p\ge 0} \sum_{q\ge 0} x^{p}y^{q} R^{(n,m,t)}_{s;p,q} z^{ts},
\end{equation}
with $R^{(n,m,t)}_{1;n,m}=1$ and all other $R^{(n,m,t)}_{1;p,q}=0$, and the explicit recursion
\begin{equation}\label{Rreccase5v}
R^{(n,m,t)}_{s+1;p,q}=\sum_{u=0}^{n} \sum_{v=0}^{m}\binom{n}{u}\binom{m}{v}\left(st\right)^{u} \left(-sta\right)^{v} R^{(n,m,t)}_{s; p-(n-u),\ q-(m-v)}, \text{ for } s\ge 1,
\end{equation}
with the convention $R^{(n,m,t)}_{s;p,q}=0$ if $p<0$ or $q<0$.

\item[\rm (5)] Since $xy=yx$, the ordinary binomial theorem gives
\[
(x+y)^n=\sum_{k=0}^{n}\binom{n}{k}x^{n-k}y^{k}.
\]
Moreover, there exist uniquely determined scalars $E_{n;i,k}, \widehat E_{n;i,k}\in\Bbbk$ such that
\begin{equation}\label{Exzdefcase5v}
\left(x+z\right)^n=\sum_{\substack{i,k\ge 0\\ i+k\le n}} x^{i}E_{n;i,k} z^{k} \quad {\rm and} \quad \left(y+z\right)^n=\sum_{\substack{i,k\ge 0\\ i+k\le n}} y^{i}\widehat E_{n;i,k} z^{k},
\end{equation}
with $E_{0;0,0}=\widehat E_{0;0,0}=1$ and $E_{0;i,k}=\widehat E_{0;i,k}=0$ for $(i,k)\neq(0,0)$, and the explicit recursions
\begin{align}
E_{n+1;i,k} = &\ E_{n;i-1,k}+k E_{n;i,k}+E_{n;i,k-1} \quad {\rm and} \label{Exzreccase5v}\\
\widehat E_{n+1;i,k} = &\ \widehat E_{n;i-1,k}-ka \widehat E_{n;i,k}+\widehat E_{n;i,k-1},\label{Eyzreccase5v}
\end{align}
where the coefficients are $0$ whenever an index is negative or $i+k>n$.
\end{enumerate}
\end{proposition}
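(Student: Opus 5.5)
The whole proposition rests on one twisting rule, which I would establish first. The relation $zx-xz=z$ gives $zx=(x+1)z$, and $yz-zy=az$ gives $zy=(y-a)z$. Together with $xy=yx$, this shows that $\Bbbk[x,y]$ is a commutative subalgebra and that, for every $f\in\Bbbk[x,y]$,
\[
z\,f(x,y)=f(x+1,y-a)\,z ,
\]
so iterating gives $z^n f(x,y)=f(x+n,y-na)\,z^n$. This is the Ore-extension description $\Bbbk[x,y][z;\varphi]$ of Remark \ref{Rosenberg}. I would prove the rule on the generators $x,y$ and then extend it multiplicatively, since $f\mapsto f(x+1,y-a)$ is an algebra automorphism.

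Part (1) follows by taking $f=x^m$ and $f=y^m$ and expanding $(x+n)^m$ and $(y-na)^m$ binomially. Uniqueness of the coefficients $W^{(m)}_{n,k}$ and $\widehat W^{(m)}_{n,k}$ comes from the PBW basis. The identity $y^nx^m=x^my^n$ is just commutativity.

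For parts (2)--(4) I would use a single induction on $s$. Write $B=x^ny^mz^t$; the cases $(x^nz^m)^s$ and $(y^nz^m)^s$ are the specialisations $m=0$ and $n=0$ of this block, with the letters renamed. Suppose $B^s=P_s(x,y)z^{ts}$. Then
\[
B^{s+1}=P_s\,z^{ts}x^ny^mz^t=P_s\,(x+st)^n(y-sta)^m z^{t(s+1)} ,
\]
which yields the product form \eqref{generalblockpowercase5v} and, with $t=1$, the closed form \eqref{xyzpowercase5v}. The coefficient recursions come from expanding the new factor. For \eqref{Rreccase5v}, \eqref{Ureccase5v} and \eqref{Uhatreccase5v}, expand $(x+st)^n(y-sta)^m$ by the binomial theorem and read off the coefficient of $x^py^q$: the term $x^{n-u}y^{m-v}$ shifts the indices by $(n-u,m-v)$. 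For \eqref{Vreccase5v}, expand
\[
(x+s)(y-sa)=xy-sa\,x+s\,y-s^2a
\]
and match coefficients of $x^iy^j$. Finally, $(x^ny^m)^s=x^{ns}y^{ms}$ is immediate from commutativity.

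For part (5), $(x+y)^n$ is classical. For $(x+z)^n$, write $(x+z)^{n+1}=(x+z)^n x+(x+z)^n z$ and use $x^iz^kx=x^i(x+k)z^k$. This gives the three contributions $E_{n;i-1,k}$, $kE_{n;i,k}$ and $E_{n;i,k-1}$ in \eqref{Exzreccase5v}. For $(y+z)^n$, the analogous identity $y^iz^ky=y^i(y-ka)z^k$ gives \eqref{Eyzreccase5v}.

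There are no conceptual obstacles here, only index bookkeeping, and I expect the main risk to be \eqref{Vreccase5v}. Matching coefficients of $x^iy^j$ in $V_s$ times $xy-sa\,x+s\,y-s^2a$ gives
\[
V_{s+1;i,j}=V_{s;i-1,j-1}-sa\,V_{s;i-1,j}+s\,V_{s;i,j-1}-s^2a\,V_{s;i,j}.
\]
The term $-sa\,x$ is the one that contributes $V_{s;i-1,j}$. The statement instead attaches $s$ to $V_{s;i-1,j}$ and $-sa$ to $V_{s;i,j-1}$, i.e.\ the middle two coefficients are swapped. I would check this against $s=1$: $(xyz)^2=x(x+1)y(y-a)z^2$, whose $x^1y^2$ coefficient is $1$. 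Then either correct the displayed recursion or confirm that the paper intends the opposite indexing convention for $V_{s;i,j}$.
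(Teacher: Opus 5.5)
Your proof follows the paper's own argument: the shift rule $z^r f(x,y)=f(x+r,y-ra)z^r$, induction on $s$ by right multiplication with one more block, and reading off coefficients from the new factor $(x+st)^n(y-sta)^m$ (and from $x+k$, $y-ka$ in part (5)).

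Your suspicion about \eqref{Vreccase5v} is correct, and no alternative convention rescues the displayed recursion, because \eqref{Vdefcase5v} fixes $i$ as the $x$-exponent. The paper's proof expands $(x+s)(y-sa)=xy+sy-sax-s^2a$ exactly as you do. It then attaches $s$ to $V_{s;i-1,j}$ and $-sa$ to $V_{s;i,j-1}$, so the displayed recursion is a typo. It agrees with the correct $V_{s+1;i,j}=V_{s;i-1,j-1}-sa\,V_{s;i-1,j}+s\,V_{s;i,j-1}-s^2a\,V_{s;i,j}$ only when $a=-1$. For every other $a$, your check $V_{2;1,2}=1\neq -a$ is a genuine counterexample.
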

\begin{proof} 
By induction on $r\ge 0$, this yields the fundamental shift identities
\begin{equation}\label{shiftlemmacase5v}
z^{r}x=(x+r)z^{r} \quad {\rm and} \quad z^{r}y=(y-ra)z^{r},
\end{equation}
whence for every polynomial $f(x,y)\in\Bbbk[x,y]$, we obtain that 
\begin{equation}\label{shiftpolycase5v}
z^{r}f(x,y)=f(x+r,y-ra) z^{r}.
\end{equation}
\begin{enumerate}
\item [\rm (1)] The identity $y^n x^m=x^m y^n$ is immediate since $xy=yx$. Using \eqref{shiftpolycase5v} with $f(x,y)=x^m$ and $f(x,y)=y^m$, then
$$
z^n x^m = \left(x + n\right)^m z^n \quad {\rm and} \quad z^n y^m = \left(y - na\right)^m z^n.
$$
Expanding the corresponding binomials, we get expressions \eqref{znxmcase5v} - \eqref{znymcase5v}.

\item [\rm (2)] For the expressions $(x^n z^m)^{s+1}=(x^n z^m)^s x^n z^m$, we commute $z^{ms}$ past $x^n$ using \eqref{shiftpolycase5v}: it yields that $z^{ms}x^n=(x+ms)^n z^{ms}$, whence 
\[
\left(x^n z^m\right)^{s+1} = \Big(\prod_{r=0}^{s-1}\left(x + rm\right)^n\Big)\left(x+sm\right)^n z^{m(s+1)},
\]
which gives the first identity in \eqref{xnzmpowercase5v}; the second is analogous using $z^{ms}y^n=(y-sma)^n z^{ms}$. 

Now, expanding 
$$
\left(x + sm\right)^n = \sum_{k=0}^{n}\binom{n}{k}(sm)^k x^{n-k}
$$ 
and comparing coefficients in \eqref{Udefcase5v}, we obtain \eqref{Ureccase5v}; in a similar way for  \eqref{Uhatreccase5v}.
\item [\rm (3)] Using \eqref{shiftlemmacase5v}, it follows that 
\begin{align*}
        (xyz)^{s+1} = &\, (xyz)^s xyz \\
        = &\, \Big((x)^{\overline{s}}(y)^{\underline{s}}_{a}z^{s}\Big)xyz \\
        = &\, (x)^{\overline{s}}(y)^{\underline{s}}_{a}(x+s)(y-sa)z^{s+1},
\end{align*}
and reasoning by induction it follows \eqref{xyzpowercase5v}. For the coefficient recursion, write 
$$
P_s(x,y):=(x)^{\overline{s}} (y)^{\underline{s}}_{a} = \sum_{i,j}V_{s;i,j}x^i y^j.
$$
Since $(x+s)(y-sa) = xy+s y-sa x-s^2a$, multiplying $P_s$ by this factor and factoring by grouping the coefficient of $x^i y^j$ we get \eqref{Vreccase5v}.

\item [\rm (4)] Set $B:=x^n y^m z^t$. Using \eqref{shiftpolycase5v}, $z^{ts}x^n y^m=(x+st)^n(y-sta)^m z^{ts}$. In this way, 
\begin{align*}
B^{s+1}&\ =B^{s}B=\Big(\prod_{r=0}^{s-1}(x+rt)^n(y-rta)^m\Big)z^{ts} x^n y^m z^t \\
&\ =\Big(\prod_{r=0}^{s}(x+rt)^n(y-rta)^m\Big)z^{t(s+1)},
\end{align*}
which is precisely \eqref{generalblockpowercase5v}. Now, expanding 
$$
(x+st)^n=\sum_{u=0}^{n}\binom{n}{u}(st)^{u}x^{n-u} \quad {\rm and} \quad (y-sta)^m=\sum_{v=0}^{m}\binom{m}{v}(-sta)^{v}y^{m-v},
$$ 
we compare the coefficients in \eqref{Rdefcase5v} to obtain \eqref{Rreccase5v}.

\item [\rm (5)] The first identity is the classical binomial theorem since $x$ and $y$ commute. For $(x+z)^n$, define $E_{n;i,k}$ as in \eqref{Exzdefcase5v}. Using that $z^{k}x=(x+k)z^{k}$ from \eqref{shiftlemmacase5v}, we obtain that
\begin{align*}
 (x+z)^{n+1} = &\, (x+z)^n x+(x+z)^n z \\
 = &\, \sum_{i,k}x^{i}\big(x+k\big)E_{n;i,k}z^{k}+\sum_{i,k}x^{i}E_{n;i,k}z^{k+1}.
\end{align*}
Factoring by grouping, the coefficient of $x^i z^k$ gives \eqref{Exzreccase5v}. In a similar way, the equality $z^{k}y = (y-ka)z^{k}$ yields \eqref{Eyzreccase5v}.
\end{enumerate}
\end{proof}

\begin{table}[H]
\captionsetup{justification=centering,font=small}
\caption{Normal ordering of algebras of type (5)}
\label{alg(5)}
\centering
\resizebox{12.5cm}{!}{
\setlength\extrarowheight{6pt}
\begin{tabular}{ |c|c|c|c|c| } 
\hline
 & $y^nx^m$ & $z^nx^m$ & $z^ny^m$ & Conditions/Recursion \\
 \hline\hline
\multirow{6}{*}{(i)} & \multirow{6}{*}{$\displaystyle\sum_{i,j,k\ge 0}W^{(m)}_{n}(i,j,k)x^i y^j z^k$} & \multirow{6}{*}{$\displaystyle\sum_{i,j,k\ge 0}\widetilde W^{(m)}_{n}(i,j,k) x^i y^j z^k$} & \multirow{6}{*}{$\displaystyle\sum_{i,j,k\ge 0}\widehat W^{(m)}_{n}(i,j,k) x^i y^j z^k$} & $W^{(m)}_{0}(i,j,k)=\delta_{i,m}\delta_{j,0}\delta_{k,0}$ \\ 
& & & & $W^{(m)}_{n+1}(i,j,k) = \sum_{r=0}^{\lfloor m/2\rfloor}(-1)^r\binom{m}{2r}W^{(m-2r)}_{n}(i,j-1,k) -\sum_{r=0}^{\lfloor (m-1)/2\rfloor}(-1)^r\binom{m}{2r+1}W^{(m-2r-1)}_{n}(i,j,k-1).$ \\
& & & & $\widetilde W^{(m)}_{0}(i,j,k)=\delta_{i,m}\delta_{j,0}\delta_{k,0}$. \\
& & & & $\widetilde W^{(m)}_{n+1}(i,j,k) =\sum_{r=0}^{\lfloor m/2\rfloor}(-1)^r\binom{m}{2r}\widetilde W^{(m-2r)}_{n}(i,j,k-1)+\sum_{r=0}^{\lfloor (m-1)/2\rfloor}(-1)^r\binom{m}{2r+1}
\sum_{a,b,c\ge 0}\widetilde W^{(m-2r-1)}_{n}(a,b,c)M^y_{a,b,c}(i,j,k)$ \\
& & & & $\widehat W^{(m)}_{0}(i,j,k)=\delta_{i,0}\delta_{j,m}\delta_{k,0}$. \\
& & & & $\widehat W^{(m)}_{n+1}(i,j,k) =\sum_{r=0}^{\lfloor m/2\rfloor}(-1)^r\binom{m}{2r}\widehat W^{(m-2r)}_{n}(i,j,k-1)-\sum_{r=0}^{\lfloor (m-1)/2\rfloor}(-1)^r\binom{m}{2r+1}\sum_{a,b,c\ge 0}\widehat W^{(m-2r-1)}_{n}(a,b,c)M^x_{a,b,c}(i,j,k)$ \\
\hline
(ii) & $\displaystyle\sum_{k=0}^{\min\{m,n\}} x^{m-k}W^{(m)}_{n,k} y^{n-k}z^{k}$ & $x^m z^n$ & $y^m z^n$ & $W^{(m)}_{n,k}=(-1)^k\binom{n}{k} m^{\underline{k}}$  \\ 
\hline
(iii) & $\displaystyle\sum_{k=0}^{\min\{m,n\}} x^{m-k}W^{(m)}_{n,k} y^{n-k}$ & $x^m z^n$ & $y^m z^n$ & Same (ii) \\ 
\hline
\multirow{2}{*}{(iv)} & \multirow{2}{*}{$x^m y^n$} & \multirow{2}{*}{$\displaystyle\sum_{k=0}^{n}\ \sum_{j=0}^{\min\{m,k\}} x^{m-j}y^{j} W^{(m)}_{n;k,j} z^{ n-k}$} & \multirow{2}{*}{$\displaystyle\sum_{k=0}^{n}\binom{n}{k}m^k y^m z^{ n-k}$} & $W^{(m)}_{0;0,0}=1,\ W^{(m)}_{0;k,j}=0\ \text{for }(k,j)\neq(0,0)$ \\ 
& & & & $W^{(m)}_{n+1;k,j}=W^{(m)}_{n;k,j}+m W^{(m)}_{n;k-1,j}+(m-j+1) W^{(m)}_{n;k-1,j-1},\text{ for } n\ge 0,\ k\ge 0,\ j\ge 0$ \\
\hline
(v) & $x^m y^n$ & $\displaystyle\sum_{k=0}^{m}\binom{m}{k} n^{k} x^{m-k} z^n$ & $\displaystyle\sum_{k=0}^{m}\binom{m}{k} (-na)^{k} y^{m-k} z^n$ & None \\ 
\hline
\end{tabular}
}
\end{table}

\begin{table}[H]
\captionsetup{justification=centering,font=small}
\caption{Power of blocks of algebras of type (5)}
\label{alg(5)1}
\centering
\resizebox{12.5cm}{!}{
\setlength\extrarowheight{6pt}
\begin{tabular}{ |c|c|c|c|c| } 
\hline
 & $(x^ny^m)^s$ & $(x^nz^m)^s$ & $(y^nz^m)^s$ & Conditions/Recursion \\
 \hline\hline
\multirow{7}{*}{(i)} & \multirow{7}{*}{$\displaystyle\sum_{i,j,k\ge 0}U^{(n,m)}_{s}(i,j,k)x^iy^jz^k$} & \multirow{7}{*}{$\displaystyle\sum_{i,j,k\ge 0}\widetilde U^{(n,m)}_{s}(i,j,k)x^iy^jz^k$} & \multirow{7}{*}{$\displaystyle\sum_{i,j,k\ge 0}\widehat U^{(n,m)}_{s}(i,j,k)x^iy^jz^k$} & $U^{(n,m)}_{1}(i,j,k)  =\delta_{i,n}\delta_{j,m}\delta_{k,0},\
\widetilde U^{(n,m)}_{1}(i,j,k)  =\delta_{i,n}\delta_{j,0}\delta_{k,m},\ \widehat U^{(n,m)}_{1}(i,j,k) =\delta_{i,0}\delta_{j,n}\delta_{k,m}.$\\
& & & & $U^{[0]}_{s}:=U^{(n,m)}_{s}$ \\
& & & & $U^{[r]}_{s}(i,j,k)=\sum_{a,b,c\ge 0}U^{[r-1]}_{s}(a,b,c)M^x_{a,b,c}(i,j,k)\text{ for } 1\le r\le n$ \\
& & & & $U^{[n+t]}_{s}(i,j,k)=\sum_{a,b,c\ge 0}U^{[n+t-1]}_{s}(a,b,c)M^y_{a,b,c}(i,j,k)\text{ for } 1\le t\le m$ \\
& & & & $U^{(n,m)}_{s+1}(i,j,k)=U^{[n+m]}_{s}(i,j,k), \ \widetilde U^{(n,m)}_{s+1}(i,j,k)=\widetilde U^{[n]}_{s}(i,j,k-m), \ \widehat U^{(n,m)}_{s+1}(i,j,k)=\widehat U^{[n]}_{s}(i,j,k-m)$ \\
& & & & $\widetilde U^{[r]}_{s}(i,j,k)=\sum_{a,b,c\ge 0}\widetilde U^{[r-1]}_{s}(a,b,c)M^x_{a,b,c}(i,j,k)\text{ for } 1\le r\le n$ \\
& & & & $\widehat U^{[r]}_{s}(i,j,k)=\sum_{a,b,c\ge 0}\widehat U^{[r-1]}_{s}(a,b,c)M^y_{a,b,c}(i,j,k)\text{ for } 1\le r\le n$ \\
\hline
\multirow{2}{*}{(ii)} & \multirow{2}{*}{$\displaystyle\sum_{\ell=0}^{\min\{ns,ms\}} x^{ns-\ell} U^{(n,m)}_{s,\ell} y^{ms-\ell}z^{\ell}$} & \multirow{2}{*}{$x^{ns}z^{ms}$} & \multirow{2}{*}{$y^{ns}z^{ms}$} & $U^{(n,m)}_{1,0}=1$ and $U^{(n,m)}_{1,\ell}=0$ for $\ell\ge 1$ \\
& & & & $U^{(n,m)}_{s+1,\ell}=\sum_{k=0}^{\min\{n,\ell\}}U^{(n,m)}_{s,\ell-k} W^{(n)}_{ms-(\ell-k), k},
\text{ for } s\ge 1,\ \ell\ge 0$ \\
\hline
(iii) & $\displaystyle\sum_{\ell=0}^{\min\{ns,ms\}} x^{ns-\ell} U^{(n,m)}_{s,\ell} y^{ms-\ell}$ & $x^{ns}z^{ms}$ & $y^{ns}z^{ms}$ & Same (ii) \\
\hline
(iv) & $x^{ns}y^{ms}$ & $\displaystyle\sum_{\ell=0}^{ms}\ \sum_{j=0}^{\min\{ns,\ell\}}x^{ns-j}y^{j} \widetilde U^{(n,m)}_{s;\ell,j} z^{ ms-\ell}$ & $y^{ns}\displaystyle\prod_{r=0}^{s-1}(z+rn)^{m}$ &  $\widetilde U^{(n,m)}_{1;0,0}=1$ and $\widetilde U^{(n,m)}_{1;\ell,j}=0$ for $(\ell,j)\neq(0,0)$ \\ 
& & & & $\widetilde U^{(n,m)}_{s+1;\ell,j} = \sum_{k=0}^{\min\{n,\ell\}} \sum_{q=0}^{\min\{k,j\}} \widetilde U^{(n,m)}_{s;\ell-k, j-q} W^{(n)}_{ ms-(\ell-k);\ k,q}, \text{ for } s\ge 1,\ \ell\ge 0,\ j\ge 0$ \\
\hline
(v) & $x^{ns}y^{ms}$ & $\Big(\displaystyle\prod_{r=0}^{s-1}(x+rm)^n\Big) z^{ms}$ & $\Big(\displaystyle\prod_{r=0}^{s-1}(y-rma)^n\Big) z^{ms}$ & None \\ 
\hline
\end{tabular}
}
\end{table}

\begin{table}[H]
\captionsetup{justification=centering,font=small}
\caption{Power of three generators of algebras of type (5)}
\label{alg(5)2}
\centering
\resizebox{12.5cm}{!}{
\setlength\extrarowheight{6pt}
\begin{tabular}{ |c|c|c|c| } 
\hline
 & $(xyz)^s$ & $(x^ny^mz^t)^s$ & Conditions/Recursion \\
 \hline\hline
\multirow{6}{*}{(i)} & \multirow{6}{*}{$\displaystyle\sum_{i,j,k\ge 0}V_s(i,j,k) x^i y^j z^k$} & \multirow{6}{*}{$\displaystyle\sum_{i,j,k\ge 0}R^{(n,m,t)}_{s}(i,j,k) x^i y^j z^k$} & $V_1(i,j,k)=\delta_{i,1}\delta_{j,1}\delta_{k,1}, \ V^{[0]}_{s}:=V_s$ \\
& & & $V^{[1]}_{s}(i,j,k)=\sum_{a,b,c\ge 0}V^{[0]}_{s}(a,b,c) M^x_{a,b,c}(i,j,k),\
V^{[2]}_{s}(i,j,k)=\sum_{a,b,c\ge 0}V^{[1]}_{s}(a,b,c) M^y_{a,b,c}(i,j,k)$ \\
& & & $V_{s+1}(i,j,k)=V^{[2]}_{s}(i,j,k-1)$ \\
& & & $R^{(n,m,t)}_{1}(i,j,k)=\delta_{i,n}\delta_{j,m}\delta_{k,t}, \ R^{[0]}_{s}:=R^{(n,m,t)}_{s}$ \\
& & & $R^{[r]}_{s}(i,j,k)=\sum_{a,b,c\ge 0}R^{[r-1]}_{s}(a,b,c) M^x_{a,b,c}(i,j,k), \ R^{[n+t']}_{s}(i,j,k)=\sum_{a,b,c\ge 0}R^{[n+t'-1]}_{s}(a,b,c) M^y_{a,b,c}(i,j,k)$ \\
& & & $R^{(n,m,t)}_{s+1}(i,j,k)=R^{[n+m]}_{s}(i,j,k-t)$ \\
\hline
\multirow{3}{*}{(ii)} & \multirow{3}{*}{$\displaystyle\sum_{\ell=0}^{s} x^{ s-\ell} y^{ s-\ell} V_{s,\ell} z^{ s+\ell}$} & \multirow{3}{*}{$\displaystyle\sum_{\ell=0}^{\min\{ns,ms\}} x^{ns-\ell} R^{(n,m,t)}_{s,\ell} y^{ms-\ell}z^{ts+\ell}$} & $V_{s,\ell}=(-1)^{\ell}\left\{\!\!\begin{matrix} s\\ s-\ell\end{matrix}\!\!\right\}$ \\
& & & $R^{(n,m,t)}_{1,0}=1$ and $R^{(n,m,t)}_{1,\ell}=0$ for $\ell\ge 1$ \\
& & & $R^{(n,m,t)}_{s+1,\ell}=\sum_{k=0}^{\min\{n,\ell\}}R^{(n,m,t)}_{s,\ell-k} W^{(n)}_{ms-(\ell-k), k},
\text{ for } s\ge 1,\ \ell\ge 0,$ \\
\hline
(iii) & $\displaystyle\sum_{\ell=0}^{s} x^{ s-\ell} y^{ s-\ell} V_{s,\ell} z^{s}$ & $\displaystyle\sum_{\ell=0}^{\min\{ns,ms\}} x^{ns-\ell} R^{(n,m,t)}_{s,\ell} y^{ms-\ell}z^{ts}$ &  Same (ii) \\
\hline
\multirow{4}{*}{(iv)} & \multirow{4}{*}{$\displaystyle\sum_{i,j,k\ge 0}V_s(i,j,k) x^i y^j z^k$} & \multirow{4}{*}{$\displaystyle\sum_{\ell=0}^{ts}\ \sum_{j=0}^{\min\{ns,\ell\}}x^{ns-j}y^{ms+j} R^{(n,m,t)}_{s;\ell,j} z^{ ts-\ell}$} & $V_1(i,j,k)=\delta_{i,1}\delta_{j,1}\delta_{k,1}, \ V^{[x]}_s(i,j,k)=\sum_{c\ge k}\binom{c}{k} V_s(i-1,j,c) + \sum_{c\ge k+1}c\binom{c-1}{k} V_s(i,j-1,c)$ \\
& & & $V^{[xy]}_s(i,j,k)=\sum_{c\ge k}\binom{c}{k} V^{[x]}_s(i,j-1,c), \ V_{s+1}(i,j,k)=V^{[xy]}_s(i,j,k-1), \text{ for } s\ge 1$ \\
& & & $R^{(n,m,t)}_{s+1;\ell,j}=\sum_{k=0}^{\min\{n,\ell\}} \sum_{q=0}^{\min\{k,j\}}
R^{(n,m,t)}_{s;\ell-k, j-q} \mathcal W^{(n,m)}_{ ts-(\ell-k);\ k,q}, \ \mathcal W^{(n,m)}_{0;0,0}=1$\\
& & & $\mathcal W^{(n,m)}_{r+1;\ k,q}=\mathcal W^{(n,m)}_{r;\ k,q}+(n+m) \mathcal W^{(n,m)}_{r;\ k-1,q}+(n-q+1) \mathcal W^{(n,m)}_{r;\ k-1,q-1}$ \\
\hline
\multirow{4}{*}{(v)} & \multirow{4}{*}{$\displaystyle\sum_{i=0}^{s}\ \sum_{j=0}^{s} x^{i}y^{j} V_{s;i,j} z^{s}$} & \multirow{4}{*}{$\displaystyle\sum_{p\ge 0} \sum_{q\ge 0} x^{p}y^{q} R^{(n,m,t)}_{s;p,q} z^{ts}$} & $V_{1;1,1}=1$ and $V_{1;i,j}=0$ for $(i,j)\neq(1,1)$ \\ 
& & & $V_{s+1;i,j}=V_{s;i-1,j-1}+s V_{s;i-1,j}-sa V_{s;i,j-1}-s^{2}a V_{s;i,j}, \text{ for } s\ge 1$ \\
& & & $R^{(n,m,t)}_{1;n,m}=1$ and all other $R^{(n,m,t)}_{1;p,q}=0$ \\
& & & $R^{(n,m,t)}_{s+1;p,q}=\sum_{u=0}^{n} \sum_{v=0}^{m}\binom{n}{u}\binom{m}{v}(st)^{u}(-sta)^{v} R^{(n,m,t)}_{s; p-(n-u),\ q-(m-v)}, \text{ for } s\ge 1$ \\
\hline
\end{tabular}
}
\end{table}

\begin{table}[H]
\captionsetup{justification=centering,font=small}
\caption{Binomial formulas for algebras of type (5)}
\label{alg(5)3}
\centering
\resizebox{12.5cm}{!}{
\setlength\extrarowheight{6pt}
\begin{tabular}{ |c|c|c|c|c| } 
\hline
 & $(x+y)^n$ & $(x+z)^n$ & $(y+z)^n$ & Conditions/Recursion \\
 \hline\hline
\multirow{4}{*}{(i)} & \multirow{4}{*}{$\displaystyle\sum_{i,j,k\ge 0}E^{\langle xy\rangle}_{n}(i,j,k) x^i y^j z^k$} & \multirow{4}{*}{$\displaystyle\sum_{i,j,k\ge 0}E^{\langle xz\rangle}_{n}(i,j,k) x^i y^j z^k$} & \multirow{4}{*}{$\displaystyle\sum_{i,j,k\ge 0}E^{\langle yz\rangle}_{n}(i,j,k) x^i y^j z^k$} & $E^{\langle\ast\rangle}_{0}(i,j,k)=\delta_{i,0}\delta_{j,0}\delta_{k,0}$ \\
& & & & $E^{\langle xy\rangle}_{n+1}(i,j,k)=\sum_{a,b,c\ge 0}E^{\langle xy\rangle}_{n}(a,b,c)\Big(M^x_{a,b,c}(i,j,k)+M^y_{a,b,c}(i,j,k)\Big)$ \\
& & & & $E^{\langle xz\rangle}_{n+1}(i,j,k)=\sum_{a,b,c\ge 0}E^{\langle xz\rangle}_{n}(a,b,c)\Big(M^x_{a,b,c}(i,j,k)+M^z_{a,b,c}(i,j,k)\Big)$ \\
& & & & $E^{\langle yz\rangle}_{n+1}(i,j,k) =\sum_{a,b,c\ge 0}E^{\langle yz\rangle}_{n}(a,b,c)\Big(M^y_{a,b,c}(i,j,k)+M^z_{a,b,c}(i,j,k)\Big)$ \\
\hline
(ii) & $\displaystyle\sum_{\substack{i,k\ge 0\\ i+2k\le n}} x^{i} E_{n;i,k} y^{ n-i-2k} z^{k}$ & $\displaystyle\sum_{k=0}^{n}\binom{n}{k}x^{n-k}z^{k}$ & $\displaystyle\sum_{k=0}^{n}\binom{n}{k}y^{n-k}z^{k}$ & $E_{n;i,k}=(-1)^k \frac{n!}{i! (n-i-2k)! k! 2^{k}}, \text{ for } i+2k\le n$ \\
\hline
(iii) & $\displaystyle\sum_{i=0}^{n} x^{i}E_{n;i} y^{ n-i}$ & $\displaystyle\sum_{k=0}^{n}\binom{n}{k}x^{n-k}z^{k}$ & $\displaystyle\sum_{k=0}^{n}\binom{n}{k}y^{n-k}z^{k}$ & $E_{n;i}=\binom{n}{i} ( -b)^{ n-i} (n-i)!$ \\
\hline
\multirow{4}{*}{(iv)} & \multirow{4}{*}{$\displaystyle\sum_{k=0}^{n}\binom{n}{k}x^{n-k}y^{k}$} & \multirow{4}{*}{$\displaystyle\sum_{\substack{i,j,k\ge 0\\ i+j+k\le n}} x^{i}y^{j} E^{\langle xz\rangle}_{n;i,j,k} z^{k}$} & \multirow{4}{*}{$\displaystyle\sum_{\substack{i,k\ge 0\\ i+k\le n}} y^{i} E^{\langle yz\rangle}_{n;i,k} z^{k}$} & $E^{\langle yz\rangle}_{0;0,0}=1$ and $E^{\langle yz\rangle}_{0;i,k}=0$ for $(i,k)\neq(0,0)$ \\ 
& & & & $E^{\langle yz\rangle}_{n+1;i,k}=E^{\langle yz\rangle}_{n;i,k-1}+\sum_{c\ge k}\binom{c}{k} E^{\langle yz\rangle}_{n;i-1,c}$ \\
& & & & $E^{\langle xz\rangle}_{0;0,0,0}=1$ and $E^{\langle xz\rangle}_{0;i,j,k}=0$ for $(i,j,k)\neq(0,0,0)$ \\
& & & & $E^{\langle xz\rangle}_{n+1;i,j,k}=E^{\langle xz\rangle}_{n;i,j,k-1}
+\sum_{c\ge k}\binom{c}{k} E^{\langle xz\rangle}_{n;i-1,j,c}
+\sum_{c\ge k+1}c\binom{c-1}{k} E^{\langle xz\rangle}_{n;i,j-1,c}$ \\
\hline
\multirow{2}{*}{(v)} & \multirow{2}{*}{$\displaystyle\sum_{k=0}^{n}\binom{n}{k}x^{n-k}y^{k}$} & \multirow{2}{*}{$\displaystyle\sum_{\substack{i,k\ge 0\\ i+k\le n}} x^{i}E_{n;i,k} z^{k}$} & \multirow{2}{*}{$\displaystyle\sum_{\substack{i,k\ge 0\\ i+k\le n}} y^{i}\widehat E_{n;i,k} z^{k}$} & $E_{0;0,0}=\widehat E_{0;0,0}=1$ and $E_{0;i,k}=\widehat E_{0;i,k}=0$ for $(i,k)\neq(0,0)$ \\ 
& & & & $E_{n+1;i,k}=E_{n;i-1,k}+k E_{n;i,k}+E_{n;i,k-1},\ \widehat E_{n+1;i,k}=\widehat E_{n;i-1,k}-ka \widehat E_{n;i,k}+\widehat E_{n;i,k-1}$ \\
\hline
\end{tabular}
}
\end{table}

\section{Computational setup}\label{Computationalsetup}

Having in mind Definition \ref{3dimensionaldimension} (ii), throughout the paper we fix the PBW monomial order $x\prec y\prec z$, so that the intended normal forms are $\Bbbk$-linear combinations of PBW monomials $x^i y^j z^k$. In this section, we implement a deterministic reduction procedure that repeatedly rewrites the leftmost forbidden adjacent pair among $yx, zy$ and $zx$. For the fifteen algebras classified in Proposition \ref{3-dimensionalClassification} Bergman's diamond lemma \cite{Bergman1978} is applied (c.f. \cite{ReyesSuarez20173D}), so the output of the code below coincides with the unique PBW normal form.

We rewrite $3$-dimensional skew polynomial algebras in the following form: $A$ is a $\Bbbk$-algebra generated by the indeterminates $x,y,z$ restricted to relations 
\begin{align}
yz-\alpha zy & =a_{\lambda}x+b_{\lambda}y+c_{\lambda}z+d_{\lambda}, \notag \\
zx-\beta xz & =a_{\mu}x+b_{\mu}y+c_{\mu}z+d_{\mu}, \quad {\rm and}\ \label{rel3skew} \\
xy-\gamma yx &= a_{\nu}x+b_{\nu}y+c_{\nu}z+d_{\nu}, \notag
\end{align} 
with the elements $a$'s, $b$'s, $c$'s, $d$'s belonging to $\Bbbk$ and $\alpha, \beta, \gamma \in \Bbbk^{\times}$.

Instead of writing separate \texttt{SageMath} scripts for each of these algebras and for each computational task, we implement a single PBW-reduction engine for the general defining relations \eqref{rel3skew}. In this way, every algebra in the list is obtained by specializing the parameters $\alpha, \beta, \gamma$ and $a_i,b_i,c_i,d_i \in \Bbbk$.

The next code implements a PBW reduction engine  that rewrites the forbidden pairs $yx,zy,zx$ using \eqref{rel3skew} to compute PBW normal forms, and hence products and powers after parameter specialization.

\begin{MyVerbatim}
class ThreeSkewPBW:
    def __init__(self, K, params):
        """
        K: a field where alpha,beta,gamma are invertible.
        params: dict with keys:
          alpha,beta,gamma, A1,B1,C1,D1, A2,B2,C2,D2, A3,B3,C3,D3
        """
        self.K = K
        self.p = params
        self.forbidden = {('y','x'), ('z','y'), ('z','x')}

    def _rewrite_pair(self, pair):
        K = self.K
        p = self.p

        alpha = p['alpha']; beta = p['beta']; gamma = p['gamma']
        A1,B1,C1,D1 = p['A1'],p['B1'],p['C1'],p['D1']
        A2,B2,C2,D2 = p['A2'],p['B2'],p['C2'],p['D2']
        A3,B3,C3,D3 = p['A3'],p['B3'],p['C3'],p['D3']

        # yx -> gamma^{-1}(xy - (A3 x + B3 y + C3 z + D3))
        if pair == ('y','x'):
            invg = K(1)/gamma
            return {
                ('x','y'): invg,
                ('x',):   -invg*A3,
                ('y',):   -invg*B3,
                ('z',):   -invg*C3,
                ():       -invg*D3
            }

        # zy -> alpha^{-1}(yz - (A1 x + B1 y + C1 z + D1))
        if pair == ('z','y'):
            inva = K(1)/alpha
            return {
                ('y','z'): inva,
                ('x',):    -inva*A1,
                ('y',):    -inva*B1,
                ('z',):    -inva*C1,
                ():        -inva*D1
            }

        # zx -> beta*xz + (A2 x + B2 y + C2 z + D2)
        if pair == ('z','x'):
            return {
                ('x','z'): K(beta),
                ('x',):    K(A2),
                ('y',):    K(B2),
                ('z',):    K(C2),
                ():        K(D2)
            }

        return None

    def reduce_lin(self, expr):
        """
        expr: dict word(tuple of 'x','y','z') -> coefficient in K.
        """
        K = self.K
        expr = {tuple(w): K(c) for w,c in expr.items() if c != 0}
        changed = True
        while changed:
            changed = False
            new = {}
            for word, coef in expr.items():
                idx = None
                for i in range(len(word)-1):
                    if (word[i], word[i+1]) in self.forbidden:
                        idx = i
                        break
                if idx is None:
                    new[word] = new.get(word, K(0)) + coef
                    continue

                changed = True
                pre = word[:idx]
                suf = word[idx+2:]
                repl = self._rewrite_pair((word[idx], word[i+1]))

                repl = self._rewrite_pair((word[idx], word[idx+1]))
                for w2, c2 in repl.items():
                    wfull = pre + w2 + suf
                    new[wfull] = new.get(wfull, K(0)) + coef*c2
            expr = {w:c for w,c in new.items() if c != 0}
        return expr

    def NF_word(self, word):
        return self.reduce_lin({tuple(word): self.K(1)})

    def add(self, A, B, sA=None, sB=None):
        K = self.K
        if sA is None: sA = K(1)
        if sB is None: sB = K(1)
        C = {}
        for w,c in A.items():
            C[w] = C.get(w,K(0)) + sA*c
        for w,c in B.items():
            C[w] = C.get(w,K(0)) + sB*c
        return {w:c for w,c in C.items() if c != 0}

    def mul(self, A, B):
        K = self.K
        prod = {}
        for w1,c1 in A.items():
            for w2,c2 in B.items():
                w = w1 + w2
                prod[w] = prod.get(w, K(0)) + c1*c2
        return self.reduce_lin(prod)

    def pow(self, A, n):
        K = self.K
        E = {(): K(1)}
        if n == 0:
            return E
        for _ in range(n):
            E = self.mul(E, A)
        return E

    def X(self): return {('x',): self.K(1)}
    def Y(self): return {('y',): self.K(1)}
    def Z(self): return {('z',): self.K(1)}

    def collect_PBW(self, NF):
        out = {}
        for w,c in NF.items():
            i=0
            while i<len(w) and w[i]=='x': i+=1
            j=0
            t=i
            while t<len(w) and w[t]=='y':
                j += 1
                t += 1
            k = len(w)-t
            out[(i,j,k)] = out.get((i,j,k), self.K(0)) + c
        return {e:v for e,v in out.items() if v != 0}
\end{MyVerbatim}

The next code provides reusable Sage routines for any specialization of the parameters.

\begin{MyVerbatim}
def word_pow(letter, n):
    return [letter]*n

def word_block(n,m,t):
    return (['x']*n) + (['y']*m) + (['z']*t)

# (1) y^n x^m, z^n x^m, z^n y^m
def item1_yx(A, n, m): return A.NF_word(word_pow('y',n) + word_pow('x',m))
def item1_zx(A, n, m): return A.NF_word(word_pow('z',n) + word_pow('x',m))
def item1_zy(A, n, m): return A.NF_word(word_pow('z',n) + word_pow('y',m))

# (2) (x^n y^m)^s, (x^n z^m)^s, (y^n z^m)^s
def item2_xnyms(A, n, m, s):
    base = A.NF_word(word_pow('x',n) + word_pow('y',m))
    return A.pow(base, s)

def item2_xnzms(A, n, m, s):
    base = A.NF_word(word_pow('x',n) + word_pow('z',m))
    return A.pow(base, s)

def item2_ynzms(A, n, m, s):
    base = A.NF_word(word_pow('y',n) + word_pow('z',m))
    return A.pow(base, s)

# (3) (xyz)^s
def item3_xyzs(A, s):
    base = A.NF_word(['x','y','z'])
    return A.pow(base, s)

# (4) (x^n y^m z^t)^s
def item4_block_power(A, n, m, t, s):
    base = A.NF_word(word_block(n,m,t))
    return A.pow(base, s)

# (5) Newton binomials
def item5_binomial(A, u, v, n):
    U = {'x':A.X(), 'y':A.Y(), 'z':A.Z()}[u]
    V = {'x':A.X(), 'y':A.Y(), 'z':A.Z()}[v]
    return A.pow(A.add(U, V), n)
\end{MyVerbatim}

This last code encodes each of the $15$ algebras as a parameter dictionary, so the same PBW engine and routines can be run by simply switching the selected case.

\begin{MyVerbatim}
R.<alpha,beta,gamma,a,b, A1,B1,C1,D1, A2,B2,C2,D2, A3,B3,C3,D3> = PolynomialRing(QQ)
K = FractionField(R)

def zero_params():
    return dict(alpha=K(1), beta=K(1), gamma=K(1),
                A1=K(0),B1=K(0),C1=K(0),D1=K(0),
                A2=K(0),B2=K(0),C2=K(0),D2=K(0),
                A3=K(0),B3=K(0),C3=K(0),D3=K(0))

CASES = {}

# (1) yz-alpha zy=0, zx-beta xz=0, xy-gamma yx=0 (|{alpha,beta,gamma}|=3)
p = zero_params()
p.update(alpha=K(alpha), beta=K(beta), gamma=K(gamma))
CASES['(1)'] = p

# (2) beta != alpha=gamma=1
p = zero_params(); p.update(beta=K(beta), C1=K(1), B2=K(1), A3=K(1))     ; CASES['(2)(i)']  = p
p = zero_params(); p.update(beta=K(beta), C1=K(1), D2=K(b), A3=K(1))     ; CASES['(2)(ii)'] = p
p = zero_params(); p.update(beta=K(beta), B2=K(1))                       ; CASES['(2)(iii)']= p
p = zero_params(); p.update(beta=K(beta), D2=K(b))                       ; CASES['(2)(iv)'] = p
p = zero_params(); p.update(beta=K(beta), C1=K(a), A3=K(1))              ; CASES['(2)(v)']  = p
p = zero_params(); p.update(beta=K(beta), C1=K(1))                       ; CASES['(2)(vi)'] = p

# (3) beta != alpha=gamma != 1
p = zero_params(); p.update(alpha=K(alpha), beta=K(beta), gamma=K(alpha),
                            B2=K(1), D2=K(b))                             ; CASES['(3)(i)']  = p
p = zero_params(); p.update(alpha=K(alpha), beta=K(beta), gamma=K(alpha),
                            D2=K(b))                                      ; CASES['(3)(ii)'] = p

# (4) alpha=beta=gamma != 1: yz-a zy = A1*x + D1, zx-a xz = B2*y + D2, xy-a yx = C3*z + D3
p = zero_params()
p.update(alpha=K(alpha), beta=K(alpha), gamma=K(alpha),
         A1=K(A1), D1=K(D1),  B2=K(B2), D2=K(D2),  C3=K(C3), D3=K(D3))
CASES['(4)'] = p

# (5) alpha=beta=gamma=1
p = zero_params(); p.update(A1=K(1), B2=K(1), C3=K(1))                    ; CASES['(5)(i)']  = p
p = zero_params(); p.update(C3=K(1))                                      ; CASES['(5)(ii)'] = p
p = zero_params(); p.update(D3=K(b))                                      ; CASES['(5)(iii)']= p
p = zero_params(); p.update(B1=K(-1), A2=K(1), B2=K(1))                    ; CASES['(5)(iv)'] = p
p = zero_params(); p.update(C1=K(a), A2=K(1))                              ; CASES['(5)(v)']  = p
\end{MyVerbatim}

\section{Future work}\label{Futurework}

As a natural continuation of the present work, we want to extend the combinatorial and computational analysis of PBW normal ordering developed here to other relevant classes of noncommutative algebras admitting structured rewriting systems, in particular to diffusion algebras \cite{Hinchcliffe2005, IsaevPyatovRittenberg2001, PyatovTwarock2002, RubianoReyes2026Diffusionalgebras} and bi-quadratic algebras on three generators with PBW basis \cite{Bavula2023, RubianoReyes2026Biquadraticalgebras}. Our aim is to determine whether these families also support a uniform normal ordering theory in terms of explicit recursive reduction procedures and coefficient arrays generalizing the Stirling-type patterns that arise in the skew polynomial setting.

\section{Declarations}

The authors have no conflict of interest to disclose.

\end{document}